\documentclass[11pt]{article}
\usepackage{amsmath,amsfonts,amsthm,amssymb, mathtools}
\usepackage{mathrsfs, graphicx,color,latexsym, tikz, calc,
}
\usepackage{tikz-cd}
\usepackage{graphicx}
\usepackage{epstopdf}
\usepackage{enumerate}
\usepackage{caption}
\usepackage{stmaryrd}
\usepackage{hyperref}
\usepackage{cite}

\usetikzlibrary{shadows}
\usetikzlibrary{patterns,arrows,decorations.pathreplacing}

\usepackage[margin=2.5cm]{geometry}
\usepackage{ulem}

\newtheorem{theorem}{Theorem}[section]
\newtheorem{lemma}[theorem]{Lemma}

\newtheorem{conjecture}[theorem]{Conjecture}
\newtheorem{corollary}[theorem]{Corollary}
\newtheorem{remark}[theorem]{Remark}

\newtheorem{claim}{Claim}[section]

\allowdisplaybreaks[4]

\title{\bf \Large }
\date{ }
\title{{\bf \Large 
Subspace variations of the weighted skew Bollob\'as theorem}\footnote{E-mail addresses: \url{wuyjmath@163.com} (Y. Wu), \url{ytli0921@hnu.edu.cn} (Y. Li), \url{lulugdmath@163.com} (L. Lu), \url{fenglh@163.com} (L. Feng). }
\author{
{\small  Yongjiang Wu$^1$,\ \ Yongtao Li$^2$, \ \ Lu Lu$^1$,\ \ Lihua Feng$^1$
}\\[2mm]
\small $^1$School of Mathematics and Statistics, HNP-LAMA, Central South University\\
 \small Changsha, Hunan, 410083, China\\ 
 \small $^2$Yau Mathematical Sciences Center, Tsinghua University, Beijing, 100084, China\\
}  }

\begin{document}
\maketitle
\begin{abstract}  
Let $V$ be a finite-dimensional real vector space. A collection $\mathcal{P} = \{(A_i,B_i)\}_{i=1}^m$ of pairs of subspaces of $V$ is called a  skew Bollob\'as system if $\dim(A_i\cap B_i)=0$ for each $i\in [m]$ and 
$\dim(A_i\cap B_j)>0$ for all $1\leq i<j \leq m$. 
Assume that $V = V^{(1)}\oplus \cdots \oplus V^{(r)}$ and $\mathcal{P}= \{(A_i,B_i)\}_{i=1}^m$ is a skew Bollob\'as system of subspaces of $V$ satisfying $
A_i = \bigoplus_{k=1}^r (A_i \cap V^{(k)})$ and $ B_i = \bigoplus_{k=1}^r (B_i \cap V^{(k)})$ for each $i\in [m]$. Denote $a_{i,k} = \dim(A_i \cap V^{(k)})$ and $b_{i,k} = \dim(B_i \cap V^{(k)})$. Suppose that $a_{1,k} \le \cdots \le a_{m,k}$ and $b_{1,k} \ge \cdots \ge b_{m,k}$ for each $k\in [r]$. Using the exterior algebraic method developed by Lov\'{a}sz and Scott--Wilmer, we prove that 
$$
\sum_{i=1}^{m} \frac{1}{\prod_{k=1}^{r} \binom{a_{i,k}+b_{i,k}}{a_{i,k}}} \le 1 .
$$
This generalizes the results of Alon (JCTA, 1985) and Scott--Wilmer (JLMS, 2021) to multipart weighted setting.
Secondly, we solve a conjecture of Hegedüs (AJC, 2015) concerning projective subspaces, showing that any skew Bollob\'as system of projective subspaces in an $n$-dimensional projective space contains at most $2^{n+1} - 2$ pairs.
Thirdly, we prove that
if $\mathcal{P}= \{(A_i,B_i)\}_{i=1}^m$ is a skew Bollob\'as system of subspaces of $V$ with $a_i=\dim (A_i)$ and $b_i=\dim (B_i)$, then
$$
\sum_{i=1}^m \frac{1}{(a_i+ b_i+1)\binom{a_i+b_i}{a_i}} \le 1.
$$
This gives an extension to the subspace setting of the results of Hegedüs--Frankl (EUJC, 2024) and Yue (DM, 2026).
Finally, we extend the above inequality to systems of $d$-tuples of subspaces, giving a unified bound that implies the corresponding results  for $d$-tuples of subsets. 
\end{abstract}

{\bf AMS Classification}:  05C65; 05D05 

{\bf Keywords}: Skew Bollob\'as system; Weighted inequality; Exterior algebra method.

\section{Introduction}

For two integers $a\leq n$, we denote  $[a, n]=\{a, a+1, \ldots, n\}$ and  $[n]=[1,n]$. For $r\in [0,n]$, let $\binom{[n]}{r}=\{A\subseteq [n]: |A|=r\}$.
A collection $\mathcal{P}=\{(A_i, B_i)\}_{i=1}^m$ of pairs of subsets of $[n]$ is called a  \textit{Bollob\'as  system}  if it satisfies 
$A_i\cap B_i=\emptyset \text{ for every } i \in [m]$, 
 and 
$A_i\cap B_j\neq\emptyset \text{ for all } 1\leq i\neq j \leq m.$
In 1965, Bollob\'as \cite{B65} proved the following celebrated result,  known as the Two Families Theorem.

\begin{theorem}[Bollob\'as \cite{B65}]\label{B65} 
Let  $\mathcal{P}=\{(A_i, B_i)\}_{i=1}^m$  be a  Bollob\'as system of subsets of $[n]$. Suppose that $|A_i|=a$ and $|B_i|=b$ for every $ i\in [m]$. Then $m\le {a+b \choose a}$. 
\end{theorem}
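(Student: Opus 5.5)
We prove Theorem~\ref{B65} using the random permutation (Lubell-type) argument, which in fact gives the stronger LYM-type inequality $\sum_{i=1}^m \binom{|A_i|+|B_i|}{|A_i|}^{-1}\le 1$. Specializing to $|A_i|=a$ and $|B_i|=b$ then yields $m\le\binom{a+b}{a}$.

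\textbf{Step 1 (events).} Choose a uniformly random linear ordering $\sigma$ of the ground set $[n]$. For each $i\in[m]$, let $E_i$ be the event that every element of $A_i$ precedes every element of $B_i$ in $\sigma$. Since $A_i\cap B_i=\emptyset$, the restriction of $\sigma$ to $A_i\cup B_i$ is uniform among the $(|A_i|+|B_i|)!$ orderings. Exactly $|A_i|!\,|B_i|!$ of these put $A_i$ first, so
$$\Pr(E_i)=\binom{|A_i|+|B_i|}{|A_i|}^{-1}.$$

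\textbf{Step 2 (disjointness).} We show that the events $E_i$ are pairwise disjoint. Suppose $E_i$ and $E_j$ both occur with $i\ne j$. Assume without loss of generality that the last element of $A_i$ comes no later than the last element of $A_j$ in $\sigma$. By the Bollob\'as condition there is an element $x\in A_j\cap B_i$.

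Since $E_i$ holds and $x\in B_i$, the element $x$ lies strictly after every element of $A_i$. Since $E_j$ holds and $x\in A_j$, the element $x$ is no later than the last element of $A_j$. Now take $y\in A_i\cap B_j$, which exists by the Bollob\'as condition. Since $E_j$ holds and $y\in B_j$, the element $y$ lies strictly after every element of $A_j$, and in particular after the last element of $A_j$. But $y\in A_i$, so $y$ is no later than the last element of $A_i$, which by our assumption is no later than the last element of $A_j$. This is a contradiction.

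\textbf{Step 3 (conclusion).} Because the events $E_i$ are pairwise disjoint,
$$\sum_i \Pr(E_i)\le 1,$$
which is the LYM-type inequality. With $|A_i|=a$ and $|B_i|=b$ for all $i$, every term equals $\binom{a+b}{a}^{-1}$, and so $m\le\binom{a+b}{a}$.

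The only delicate point is Step~2. There the symmetric Bollob\'as condition is used twice, once in each direction, and the choice of which $A$ ends first is what orients the argument. This is exactly why the skew version needs different (algebraic) methods, as developed later in the paper.
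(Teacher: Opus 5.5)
Your proof is correct, but it is not the route the paper takes. The paper gives no self-contained proof of Theorem~\ref{B65}. It lists the classical approaches, and in its own framework it obtains the theorem from the subspace version, Theorem~\ref{thm-L77}, by sending each set to the coordinate subspace $\mathrm{span}\{e_p: p\in A_i\}$. The result is also the case $r=1$, constant sizes, of Corollary~\ref{cr3}.

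That route is Lov\'asz's exterior algebra argument. After a generic projection onto an $(a+b)$-dimensional space, the relations $v_{A_i}\wedge v_{B_i}\neq 0$ and $v_{A_i}\wedge v_{B_j}=0$ for $i<j$ force the $v_{A_i}$ to be linearly independent in a space of dimension $\binom{a+b}{a}$. Your random-ordering argument is the probabilistic proof the paper only cites.

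The trade-off is real:
\begin{itemize}
\item Your argument is elementary and gives the non-uniform LYM inequality of Theorem~\ref{weighted-B65} for free. But it genuinely needs the symmetric hypothesis. Which of the two indices plays the role of $i$ in the contradiction is decided by $\sigma$ (whose $A$ ends first), not by the index order.
\item The algebraic route gives only the uniform bound. But it survives the skew hypothesis (only $i<j$) and works for subspaces.
\end{itemize}
No argument of your shape can survive the skew hypothesis, because it would prove the weighted inequality. The Babai--Frankl example shows that inequality is false for skew systems: take all pairs $(A,[n]\setminus A)$ in decreasing order of $|A|$, giving total weight $n+1$.

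One correction to Step~2 and to your closing remark. The element $x\in A_j\cap B_i$ does no work. The two facts you derive about it are consistent with your ordering assumption. The contradiction comes entirely from $y\in A_i\cap B_j$, via $\mathrm{pos}(y)\le \mathrm{last}(A_i)\le \mathrm{last}(A_j)<\mathrm{pos}(y)$.

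So the symmetric condition is not ``used twice, once in each direction''. It is used once, to justify the WLOG orientation, and you can delete the sentences about $x$. Also note that for $m\ge 2$ every $A_i$ is nonempty, since $A_i\cap B_j\neq\emptyset$, so ``the last element of $A_i$'' is well defined.
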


The first standard approach to proving Theorem \ref{B65} utilizes the combinatorial technique: the inductive argument \cite{B65}, the counting argument \cite{Kat1974} and the probabilistic method \cite[page 11]{AS2016}. The original proof \cite{B65} reveals a weighted version for pairs of non-uniform families; see \cite[page 66]{Bol1986}.

\begin{theorem}[Weighted version]\label{weighted-B65} 
Let  $\mathcal{P}=\{(A_i, B_i)\}_{i=1}^m$  be a  Bollob\'as system of subsets of $[n]$. Suppose that $|A_i|=a_i$ and $|B_i|=b_i$ for every $ i\in [m]$. Then  
$$
\sum_{i=1}^m \frac{1}{
\binom{a_i + b_i}{a_i}} \leq 1 .
$$
\end{theorem}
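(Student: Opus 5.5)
We will prove Theorem \ref{weighted-B65} by the permutation (Lubell-type) counting argument. Put all elements on a common ground set, for instance $[n]$ with $n\ge \max_i(a_i+b_i)$; we may assume $A_i,B_i\subseteq[n]$. Choose a permutation $\sigma$ of $[n]$ uniformly at random and regard it as a linear order on $[n]$. For each $i\in[m]$ let $E_i$ be the event that, in the order $\sigma$, every element of $A_i$ comes before every element of $B_i$. Since $A_i\cap B_i=\emptyset$, the relative order of the $a_i+b_i$ elements of $A_i\cup B_i$ is uniform among all $(a_i+b_i)!$ orders. Of these, $a_i!\,b_i!$ put $A_i$ entirely first, so
$$
\Pr(E_i)=\frac{a_i!\,b_i!}{(a_i+b_i)!}=\frac{1}{\binom{a_i+b_i}{a_i}}.
$$

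The main step is to show that the events $E_1,\dots,E_m$ are pairwise disjoint. Suppose instead that $E_i$ and $E_j$ both occur for some $i\neq j$. Let $\alpha_i$ be the position of the last element of $A_i$ and $\beta_i$ the position of the first element of $B_i$, so $\alpha_i<\beta_i$; similarly $\alpha_j<\beta_j$. By symmetry assume $\alpha_i\le\alpha_j$. The Bollob\'as condition gives an element $x\in A_j\cap B_i$. Since $x\in A_j$, its position is at most $\alpha_j$. Since $x\in B_i$, its position is at least $\beta_i>\alpha_i$. This alone is not a contradiction, so use the other intersection as well. Take $y\in A_i\cap B_j$. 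Its position is at most $\alpha_i\le\alpha_j<\beta_j$, yet $y\in B_j$ forces its position to be at least $\beta_j$. This is a contradiction. So the case split on which of $\alpha_i,\alpha_j$ is smaller, followed by the choice of the right cross-intersection, settles disjointness. This step is where the two-sided condition $A_i\cap B_j\neq\emptyset$ for all $i\neq j$ is used, and it is the only real obstacle. In the skew setting the paper later needs other tools, such as the exterior algebra method, because this step fails there.

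Disjointness and the computed probabilities then give
$$
\sum_{i=1}^m \frac{1}{\binom{a_i+b_i}{a_i}}=\sum_{i=1}^m\Pr(E_i)=\Pr\Bigl(\bigcup_{i=1}^m E_i\Bigr)\le 1,
$$
which is the claimed inequality. Specializing to $a_i=a$ and $b_i=b$ for all $i$ recovers Theorem \ref{B65}.
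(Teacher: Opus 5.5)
Your proof is correct. It is Katona's permutation (LYM-type) counting argument, and each step holds. $\Pr(E_i)=1/\binom{a_i+b_i}{a_i}$ follows from $A_i\cap B_i=\emptyset$. Relabelling so that $\alpha_i\le\alpha_j$ and then taking $y\in A_i\cap B_j$ gives disjointness. Two small remarks:
\begin{itemize}
\item The detour through $x\in A_j\cap B_i$ is superfluous.
\item The positions $\alpha_i,\beta_i$ are well defined, because for $m\ge 2$ the hypothesis forces every $A_i$ and $B_i$ to be nonempty, and $m=1$ is trivial.
\end{itemize}

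The paper does not prove this statement itself. It is quoted as background and attributed to Bollob\'as's original inductive argument \cite{B65} (see \cite[page 66]{Bol1986}). The paper lists Katona's counting argument \cite{Kat1974} only as an alternative proof of the uniform Theorem \ref{B65}. So you have taken a standard alternative route rather than the cited one.

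Comparing the two: the inductive argument builds the weighted bound up from smaller ground sets. Your argument produces each weight directly as the probability of an event, and the events are pairwise disjoint. It also makes transparent that the two-sided hypothesis is used up exactly in the symmetric relabelling. That is precisely the step the Babai--Frankl example in the paper shows cannot be dispensed with in the skew setting.

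Neither combinatorial argument transfers directly to subspaces. This is why the paper's subspace results rest on the exterior algebra method, either directly or through Theorem \ref{thm-L77}. In that setting the weighted form needs either the monotonicity hypothesis of Theorem \ref{SW2021} or the extra factor $a_i+b_i+1$ of Theorem \ref{tm1}.
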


The second approach to proving Theorem \ref{B65} was introduced by Lov\'asz \cite{L77}, who used the anti-symmetric tensor product from exterior algebra method. This elegant algebraic method is also valid under the weaker assumption, saying that $A_i\cap B_j\neq \emptyset$ for all $i<j$, instead of for all $i\neq j$; see \cite[Theorems 5.6 and 6.12]{BF1992} for detailed arguments.
In the set-theoretic context, a system $\mathcal{P} = \{(A_i,B_i)\}_{i = 1}^m$ satisfying this weak assumption together with $A_i \cap B_i = \emptyset$ for all  $i\in [m]$ is termed a \textit{skew Bollobás system}.
Moreover, this algebraic method also gives a natural extension to the system of subspaces of a finite dimensional vector space. 
This observation was also illustrated by Frankl \cite{Fra1982} using the symmetric tensor product. We refer to \cite[page 167]{FT18}. 

Let $V$ be a real vector space with dimension $n$. 
 A collection $\mathcal{P}=\{(A_i, B_i)\}_{i=1}^m$ of pairs of subspaces of $V$ is called a  \textit{skew Bollobás system} if  
$\dim(A_i\cap B_i)=0 \text{ for every } i \in [m]$; 
and
$\dim(A_i\cap B_j)>0 \text{ for all } 1\leq i< j \leq m.$ In fact, 
the algebraic approaches of Lov\'{a}sz \cite{L77} and Frankl \cite{Fra1982} directly imply the following subspace extension of Theorem \ref{B65}.

\begin{theorem}[Skew version, Lov\'asz \cite{L77}]\label{thm-L77} 
Let  $\mathcal{P}=\{(A_i, B_i)\}_{i=1}^m$  be a  skew Bollob\'as system of subspaces of $V\cong \mathbb{R}^n$. Suppose that $\dim (A_i)=a$ and $\dim (B_i)= b$ for every $ i\in [m]$. Then  
$$
m \leq \binom{a + b}{a}.
$$
\end{theorem}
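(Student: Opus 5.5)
We follow Lov\'asz's exterior algebra argument. The idea is to move to a space of dimension $a+b$, attach to each $A_i$ and $B_i$ a wedge vector, and show the resulting vectors are linearly independent in $\bigwedge^a \mathbb{R}^{a+b}$, a space of dimension $\binom{a+b}{a}$.

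\textbf{Step 1: reduce to dimension $a+b$.} Choose a generic linear map $\phi: V\to W\cong\mathbb{R}^{a+b}$. Genericity should give, for every $i,j$,
$$\dim(\phi(A_i)\cap\phi(B_j)) = \dim(A_i\cap B_j) \quad\text{whenever } \dim(A_i+B_j)\le a+b.$$
Since $\dim A_i+\dim B_j = a+b$, we have $\dim(A_i+B_j)\le a+b$ in all cases, so the condition always applies. The consequences are:
\begin{itemize}
\item $\phi$ is injective on $A_i+B_i$, because $\dim(A_i+B_i)=a+b$ when $A_i\cap B_i=0$. Hence $\phi(A_i)\cap\phi(B_i)=0$.
\item If $A_i\cap B_j\neq 0$, then $\phi(A_i)\cap\phi(B_j)\neq 0$ automatically, since $\phi(A_i\cap B_j)\subseteq \phi(A_i)\cap\phi(B_j)$.
\end{itemize}
The only property we actually need is that $\phi$ is injective on each of the finitely many subspaces $A_i+B_i$, which have dimension $a+b$. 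This holds for $\phi$ avoiding finitely many proper Zariski-closed sets, so a generic $\phi$ works over $\mathbb{R}$, which is infinite. In particular the images have $\dim\phi(A_i)=a$ and $\dim\phi(B_i)=b$.

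\textbf{Step 2: wedge vectors.} For each $i$, pick bases of $\phi(A_i)$ and $\phi(B_i)$ and set
$$\alpha_i=u_1\wedge\cdots\wedge u_a\in\bigwedge^a W, \qquad \beta_i=w_1\wedge\cdots\wedge w_b\in\bigwedge^b W.$$
The standard fact used is that for subspaces $X,Y$ with $\dim X+\dim Y=\dim W$, the top-degree product $\alpha_X\wedge\beta_Y$ is nonzero exactly when $X\cap Y=0$. Applying this:
\begin{itemize}
\item $\alpha_i\wedge\beta_i\neq 0$, since $\phi(A_i)\cap\phi(B_i)=0$.
\item $\alpha_i\wedge\beta_j=0$ for $i<j$, since $\phi(A_i)\cap\phi(B_j)\neq 0$ means the combined $a+b$ vectors are linearly dependent.
\end{itemize}

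\textbf{Step 3: triangularity.} Suppose $\sum_i c_i\alpha_i=0$ with not all $c_i=0$, and let $j$ be the largest index with $c_j\neq 0$. Wedging with $\beta_j$ gives
$$\sum_{i\le j} c_i\,\alpha_i\wedge\beta_j = 0.$$
Every term with $i<j$ vanishes, leaving $c_j\,\alpha_j\wedge\beta_j=0$, which contradicts $\alpha_j\wedge\beta_j\ne0$. So the $\alpha_i$ are linearly independent in $\bigwedge^a W$, and therefore $m\le\binom{a+b}{a}$.

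The main obstacle is Step 1. We must justify that a generic projection preserves the disjointness $A_i\cap B_i=0$ for all $i$ simultaneously. The skew intersections $A_i\cap B_j\neq 0$ need no argument, since images of nonzero intersections stay nonzero. One could alternatively avoid the projection by working in $\bigwedge V$ and wedging with a generic $(n-a-b)$-dimensional subspace $U$, using $\beta_i\wedge\gamma_U$ in place of $\beta_i$. This runs into the same genericity issue in a different form: one needs $A_i\oplus B_i\oplus U=V$ for all $i$.
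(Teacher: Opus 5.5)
Your argument is correct and is precisely the Lov\'asz exterior-algebra proof that the paper cites for this theorem. You take a generic map $\phi:V\to\mathbb{R}^{a+b}$ that is injective on every $A_i+B_i$; this is possible because $m\ge 1$ forces $a+b\le n$, and injectivity on $A_i$ is what guarantees $\phi(A_i\cap B_j)\neq 0$. You then apply the triangular criterion of Lemma~\ref{TC} in $\bigwedge^a\mathbb{R}^{a+b}$. This is also what the paper's proof of Theorem~\ref{Main3} reduces to when $r=1$ and all dimensions are constant, with the generic projection $\pi^{F}_{[a+b]}$ from Lemma~\ref{SW2} playing the role of your $\phi$.
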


Using a standard construction, mapping a set to a subspace, we see that the above space system version implies the set system  version in Theorem \ref{B65}. 
Indeed, suppose that  $\mathcal{P}=\{(A_i, B_i)\}_{i=1}^m$  is a  Bollob\'as system of subsets of $[n]$ with $|A_i|=a$ and $|B_i|=b$ for every $ i\in [m]$. Let $\{e_1,\ldots,e_n\}$ be the standard basis of $\mathbb{R}^n$.
Mapping each set $A_i$ to the subspace $A_i'=\text{span}\{e_p: p\in A_i\}\subseteq \mathbb{R}^n$ and each $B_i$ to the subspace $B_i'=\text{span}\{e_q: q\in B_i\}\subseteq \mathbb{R}^n$, we see that   
$\mathcal{P}'=\{(A_i', B_i')\}_{i=1}^m$ is a  skew Bollob\'as system of subspaces of $\mathbb{R}^n$ with
$\dim (A_i')=a$ and $\dim (B_i')=b$. Thus, the inequality of Theorem \ref{thm-L77} holds for $\mathcal{P}'$, and it coincides with the bound of Theorem \ref{B65} for $\mathcal{P}$.

Motivated by Theorem \ref{thm-L77}, it is natural to extend the weighted version in Theorem \ref{weighted-B65} to the skew version for pairs of vector subspaces. 
Unfortunately, Babai and Frankl \cite[Exercise 5.1.1]{BF1992} showed that the skew version of Theorem \ref{weighted-B65} does not hold in general. 
By developing the exterior algebraic method, Scott and Wilmer \cite{SW2021} achieved the first breakthrough by establishing a skew version of Theorem \ref{weighted-B65} in a more general vector space setting under a suitable condition.

\begin{theorem}[Weighted skew version, Scott and Wilmer \cite{SW2021}]\label{SW2021} 
Let  $\mathcal{P}=\{(A_i, B_i)\}_{i=1}^m$  be a skew Bollob\'as system of subspaces of $V\cong \mathbb{R}^n$. Suppose that $\dim (A_i)=a_i$ and $\dim (B_i)=b_i$ for every $ i\in [m]$,  and that the sequences satisfy $a_1\leq a_2\leq\cdots \leq a_m$ and $b_1\geq b_2\geq\cdots \geq b_m$. Then  
$$
\sum_{i=1}^m \frac{1}{
\binom{a_i + b_i}{a_i}} \leq 1 .
$$
\end{theorem}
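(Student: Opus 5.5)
Our plan is to follow the exterior-algebra approach of Lov\'asz, upgraded by a probabilistic weighting of the wedge vectors. Set $n_i=a_i+b_i$, and let $N=\max_i n_i$. First we reduce to general position. Take a generic linear map $\phi:V\to W$ with $\dim W=N$. Applying $\phi$ keeps each $\dim A_i$ and $\dim B_i$ and keeps $A_i\cap B_i=0$, since $\dim(A_i+B_i)=n_i\le N$. It preserves nontrivial intersections $A_i\cap B_j\ne 0$ for $i<j$ because they map into intersections. So we may assume $\dim V=N$.

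For each $i$ we attach $\alpha_i=\wedge A_i\in\Lambda^{a_i}V$ and $\beta_i=\wedge B_i\in\Lambda^{b_i}V$. Then $\alpha_i\wedge\beta_i\ne0$, while $\alpha_i\wedge\beta_j=0$ for $i<j$. The degrees vary, so we cannot compare these products directly. The trick is to pad with a random complement. Choose a uniformly random flag, or equivalently a random ordered basis $w_1,\dots,w_N$ of generic vectors in $V$. For each $i$ consider the event $E_i$ that $A_i\oplus B_i\oplus\mathrm{span}(w_{1},\dots,w_{N-n_i})$ splits ``correctly''. A cleaner formulation comes from the Scott--Wilmer weighting idea: fix a generic ordered basis and a random permutation $\sigma$. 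Declare $i$ \emph{good} if, after using $\sigma$ to interleave, the first $a_i$ positions are assigned to the $A$-part and the last $b_i$ to the $B$-part, mirroring Lubell's chain argument. The probability that $i$ is good should be exactly $1/\binom{a_i+b_i}{a_i}$.

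The key step is to show that the events are pairwise disjoint, or more weakly that the set of good indices gives a triangular nonsingular system. For good $i$ we form $\tilde\alpha_i=\alpha_i\wedge(\text{first }p\text{ padding vectors})$ and $\tilde\beta_i=\beta_i\wedge(\text{last }q\text{ padding vectors})$. We pad to common degrees $(s,N-s)$ determined by the random cut point, so that $\tilde\alpha_i\wedge\tilde\beta_j=0$ for $i<j$ and $\ne0$ for $i=j$. Triangularity then gives linear independence of the $\tilde\alpha_i$ in $\Lambda^{s}V$. The monotonicity $a_i\uparrow$, $b_i\downarrow$ is exactly what guarantees that the padding used for $i<j$ is nested compatibly: the $A$-padding of $i$ lies inside that of $j$, and the $B$-padding of $j$ inside that of $i$, so the vanishing $\alpha_i\wedge\beta_j=0$ survives padding.

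With triangularity established, the counting runs as follows. For a fixed cut point $s$ and fixed random data, the good indices number at most one per ``slot''. The final goal is to show $\mathbb E[\#\text{good}]\le 1$, which gives $\sum_i 1/\binom{a_i+b_i}{a_i}\le1$. The main obstacle is designing the random padding so that both hold: each $i$ is good with probability exactly $1/\binom{n_i}{a_i}$, and at most one index can be good simultaneously. The natural attempt is a random point on the Grassmannian/flag, with $i$ good when a generic $a_i$-subspace of the flag meets $A_i$ appropriately. For the uniqueness I would first try comparing two good indices $i<j$ and deriving $\tilde\alpha_i\wedge\tilde\beta_j\ne0$ from goodness, which contradicts the skew condition by using monotonicity. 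If that fails, I would fall back on the Scott--Wilmer polynomial/weighted exterior-power argument, replacing probability by a weighted dimension count in $\bigoplus_s\Lambda^sV$.
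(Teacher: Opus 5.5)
Your proposal has a genuine gap. The step that would actually prove the inequality is left open (you call it ``the main obstacle''), and the mechanism you suggest for it cannot work.

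First, ``$i$ is good'' is not defined for subspaces. A permutation of a generic basis assigns no positions to $A_i$ or $B_i$, since these are not coordinate subspaces, so the claim $\Pr(i\text{ good})=1/\binom{a_i+b_i}{a_i}$ has no content.

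Second, even for sets the events are not disjoint in a skew system, monotone or not. There, ``all of $A_i$ precedes all of $B_i$ in a random ordering'' does have that probability. But $(\{1\},\{2\}),(\{3\},\{1\})$ is a monotone skew system, and the ordering $3,1,2$ makes both indices good. Lubell's disjointness needs both $A_i\cap B_j\neq\emptyset$ and $A_j\cap B_i\neq\emptyset$.

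Third, the padding step does not fill the hole. We have $\tilde\alpha_i\wedge\tilde\beta_j=\pm\,\alpha_i\wedge\beta_j\wedge(\cdots)=0$ for any padding, so monotonicity plays no role there. Triangularity in $\bigwedge^sV$ only yields the level-by-level bounds $|\{i:a_i\le s\le N-b_i\}|\le\binom{N}{s}$. These are bounds by $\binom{N}{s}$, not by $1$, and they cannot give the weighted inequality. When all $a_i+b_i=N$ they only give $\sum_i1/\binom{N}{a_i}\le N+1$, a value the Babai--Frankl example (which is not monotone) attains.

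The reduction to $\dim V=N$ is correct but leaves the real difficulty, the varying $a_i+b_i$, untouched. ``Falling back on Scott--Wilmer'' defers to the proof rather than giving one.

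What is missing is a way to transfer information between different degrees. The paper obtains this statement as the case $r=1$ of Theorem~\ref{Main3}. It builds the chain $W_{i+1}=\operatorname{span}\{W_i\wedge\bigwedge^{a_{i+1}-a_i}V,\ v_{A_{i+1}}\}\subseteq\bigwedge^{a_{i+1}}V$, which uses $a_i\le a_{i+1}$. It then fixes a basis generic in the sense of Lemmas~\ref{SW2} and~\ref{SW3}, and projects onto the first $n_i=a_i+b_i$ basis vectors to get $Z_i$ and $Y_i$. Two facts do the work:
\begin{itemize}
\item $\dim Z_{i+1}\ge\dim Y_i+1$. Everything in $Y_i$ is a combination of projected $v_{A_h}\wedge(\cdots)$ with $h\le i$, and these wedge to zero with the projected $v_{B_{i+1}}$. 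Genericity keeps the projected $v_{A_{i+1}}\wedge v_{B_{i+1}}$ nonzero.
\item $\dim Y_i/\binom{n_{i+1}}{a_{i+1}}\ge\dim Z_i/\binom{n_i}{a_i}$. This comes from the local LYM inequality (Lemma~\ref{SW1}) together with the projection-averaging Lemmas~\ref{ad3} and~\ref{ad3-2}. Here $b_i\ge b_{i+1}$ is needed when $n_{i+1}>n_i$, to split $a_{i+1}-a_i=(b_i-b_{i+1})+(n_{i+1}-n_i)$ into nonnegative parts.
\end{itemize}
Telescoping from $\dim Z_m\le\binom{n_m}{a_m}$ gives the bound. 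Each index adds a new dimension to a subspace whose normalized dimension never drops. This density argument is the exterior-algebra replacement for Lubell's disjoint events, and it is the ingredient your plan lacks.
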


For more related variations, we refer to \cite{A85,F84,GKMNPTX2020,YKXZG2022, HF24,Y261, Hol2021, KMN2021} and references therein. At the last paragraph in \cite[Section 6]{SW2021}, Scott and Wilmer remarked that {\it``Can any of these variations be further addressed with exterior algebra methods?''} In this paper, we provide positive evidence for supporting Scott--Wilmer's question by establishing several new weighted skew Bollob\'as-type inequalities for systems of vector spaces. 
Using the exterior algebra method, we first show a unified extension of the results of Alon \cite{A85} and Scott--Wilmer \cite{SW2021}; see the forthcoming Section \ref{sec-Alon}.  Moreover, we shall prove some other related results for weighted skew version using different methods.

\subsection{Weighted inequalities for multipart structures} \label{sec-Alon}
The classical Two Families Theorem and its variants typically treat the ground set or vector space as a single entity. A significant step toward a more structured setting was taken by Alon \cite{A85}, who considered the case where the ground set is partitioned into several disjoint blocks.

\begin{theorem}[Alon \cite{A85}]\label{A85} 
Let $[n]$ be the disjoint union of some sets $X_1,\ldots, X_r$.
Let  $\mathcal{P}=\{(A_i, B_i)\}_{i=1}^m$  be a skew Bollob\'as system of subsets of $[n]$. Suppose that $|A_i\cap X_k|=a_k$ and $|B_i\cap X_k|=b_k$ for every $ i\in [m]$ and $k\in [r]$. Then 
$$m\leq \prod_{k=1}^r {a_k+b_k \choose a_k}.$$ 
\end{theorem}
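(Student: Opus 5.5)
We prove Theorem~\ref{A85} with Lov\'asz's exterior algebra method, applied block by block and combined through a tensor product. First we pass to vector spaces. Write $V=\mathbb{R}^n=V^{(1)}\oplus\cdots\oplus V^{(r)}$, where $V^{(k)}=\mathrm{span}\{e_p:p\in X_k\}$. Then take $A_i^{(k)}=\mathrm{span}\{e_p:p\in A_i\cap X_k\}$, of dimension $a_k$, and $B_i^{(k)}=\mathrm{span}\{e_q:q\in B_i\cap X_k\}$, of dimension $b_k$. For subsets, $A_i\cap B_j=\emptyset$ if and only if $A_i\cap B_j\cap X_k=\emptyset$ for every $k$. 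So the hypothesis translates as follows:
\begin{itemize}
\item for each $i$ and every $k$, $A_i^{(k)}\cap B_i^{(k)}=0$;
\item for $i<j$ there is some $k$ with $A_i^{(k)}\cap B_j^{(k)}\neq 0$.
\end{itemize}

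Next we choose a general position projection in each block. Let $N_k=|X_k|$. Pick a linear map $\phi_k:V^{(k)}\to W_k\cong\mathbb{R}^{a_k+b_k}$ that is injective on every $A_i^{(k)}+B_i^{(k)}$; a generic map works, since there are finitely many subspaces, each of dimension at most $a_k+b_k$. In $\bigwedge W_k$ set
\[
\alpha_i^{(k)}=\bigwedge \phi_k(\text{basis of }A_i^{(k)})\in \textstyle\bigwedge^{a_k}W_k,\qquad
\beta_j^{(k)}=\bigwedge \phi_k(\text{basis of }B_j^{(k)})\in \textstyle\bigwedge^{b_k}W_k .
\]
Then $\alpha_i^{(k)}\wedge\beta_j^{(k)}\in\bigwedge^{a_k+b_k}W_k\cong\mathbb{R}$. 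Because $\phi_k$ is injective on $A_i^{(k)}\oplus B_i^{(k)}$, this product is nonzero when $j=i$. If $A_i^{(k)}\cap B_j^{(k)}\neq0$, the images are linearly dependent, so the product vanishes.

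Now we tensor the blocks together. Define
\[
\alpha_i=\alpha_i^{(1)}\otimes\cdots\otimes\alpha_i^{(r)}\in U:=\bigotimes_{k=1}^r \textstyle\bigwedge^{a_k}W_k .
\]
Consider the bilinear pairing $U\times U'\to\mathbb{R}$, with $U'=\bigotimes_k\bigwedge^{b_k}W_k$, given by the product of the blockwise wedge pairings. Setting $\beta_j=\bigotimes_k\beta_j^{(k)}$, the pairing satisfies
\[
\langle\alpha_i,\beta_j\rangle=\prod_k \alpha_i^{(k)}\wedge\beta_j^{(k)} .
\]
This is nonzero for $j=i$. It is zero for $i<j$, because at least one factor vanishes. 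The standard triangularity argument now applies. Suppose $\sum_i c_i\alpha_i=0$ with $c\neq0$, and let $i_0$ be the largest index with $c_{i_0}\ne0$. Pairing with $\beta_{i_0}$ kills every term with $i<i_0$ by the vanishing above, leaving $c_{i_0}\langle\alpha_{i_0},\beta_{i_0}\rangle=0$, a contradiction. So the $\alpha_i$ are linearly independent, and $m\le\dim U=\prod_k\binom{a_k+b_k}{a_k}$.

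The main obstacle is the choice of the projections $\phi_k$. We need a single $\phi_k$ per block that works simultaneously for all $i$, and it must send intersecting pairs to dependent sets. The second property is automatic for any linear map. The first needs a genericity argument: the set of maps injective on a fixed subspace of dimension at most $\dim W_k$ is Zariski open and nonempty. A finite intersection of such sets is therefore nonempty, which settles it.
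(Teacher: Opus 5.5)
Your proof is correct. It is the standard Lov\'asz-type argument for this theorem: the triangular criterion applied to the pure tensors $\alpha_i$ in $\bigotimes_k\bigwedge^{a_k}W_k$. Every step checks out: the generic choice of $\phi_k$, the vanishing and nonvanishing of the blockwise pairings, and the bound $\dim U=\prod_k\binom{a_k+b_k}{a_k}$.

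The paper does not prove this statement directly. It recovers it as the uniform case (Corollary \ref{cr2}) of Theorem \ref{Main3}. The ingredients overlap with yours:
\begin{itemize}
\item your generic $\phi_k$ plays the role of the paper's projection $\pi^{F_k}_{[a_k+b_k]}$ for a basis chosen via Lemma \ref{SW2};
\item your tensor product corresponds to $\bigwedge_k\bigwedge^{a_k}V_{n_k}$, whose dimension is a product by Lemma \ref{ad1}.
\end{itemize}
The mechanism is different, however. Instead of a triangular system, the paper runs the Scott--Wilmer recursion blockwise. It builds the spaces $W_i^{(k)}$, sets $Z_i=\bigwedge_k Z_i^{(k)}$, and proves the increment $\dim Z_{i+1}\ge \dim Y_i+1$ (Claim \ref{cla1}) together with local-LYM transfers (Claim \ref{cla2}). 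In the uniform case the transfers are vacuous ($Y_i=Z_i$), so for this statement everything rests on Claim \ref{cla1}. The paper's scheme aims to buy the non-uniform weighted inequality. Yours buys a short, self-contained proof with no recursion and no LYM lemmas.

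Your route is also the sound one here. For $r\ge 2$, the product of blockwise spans $\bigwedge_k Z_i^{(k)}$ can be strictly larger than the span of the projected $v_{A_1},\dots,v_{A_i}$. It can even already contain the projection of $v_{A_{i+1}}$. Take
\begin{itemize}
\item $X_1=\{1,2\}$, $X_2=\{3,4\}$;
\item $A_1=\{1,3\}$, $A_2=\{2,4\}$, $A_3=\{1,4\}$, $A_4=\{2,3\}$;
\item $B_i=[4]\setminus A_i$, so $a_k=b_k=1$.
\end{itemize}
This is a skew Bollob\'as system (in fact a two-sided one). Here $n_{i,k}=2=\dim V^{(k)}$, so all projections are the identity. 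Then $Z_2=Y_2=Z_3=V^{(1)}\wedge V^{(2)}$ has dimension $4$, and Claim \ref{cla1} with $i=2$ would assert $4\ge 5$.

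The faulty step is the assertion that every element of $W_i\wedge\bigwedge^{a_{i+1}-a_i}V$ is a combination of terms $v_{A_h}\wedge(\cdot)$ with $h\le i$. Mixed products are not of that form: for example $v_{A_{1,1}}\wedge v_{A_{2,2}}=e_1\wedge e_4=v_{A_3}$. Your proof never forms such mixed products, because triangularity controls the span of the $\alpha_i$ themselves. So your argument establishes the statement, whereas the paper's route to it has a gap as written.
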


Alon’s result  requires the intersection sizes $a_k, b_k$ to be constant across all pairs. A natural question is whether this uniformity condition can be relaxed to allow varying sizes while still obtaining a meaningful bound.
We answer this question by proving a weighted extension in the more general setting of vector spaces. The first main result of this paper  generalizes  Theorems \ref{SW2021} and \ref{A85}. 

\begin{theorem}\label{Main3} 
Let $V = V^{(1)} \oplus V^{(2)} \oplus \cdots \oplus V^{(r)}$ be a finite-dimensional real vector space, and let  $\mathcal{P}=\{(A_i, B_i)\}_{i=1}^m$ be a skew Bollob\'as system of subspaces of $V$ satisfying $
A_i = \bigoplus_{k=1}^r (A_i \cap V^{(k)})$ and $ B_i = \bigoplus_{k=1}^r (B_i \cap V^{(k)})$ for each $i\in [m]$. Denote
$a_{i,k} = \dim (A_i \cap V^{(k)})$ and $b_{i,k} = \dim (B_i \cap V^{(k)})$.
Assume that $
a_{1,k} \le a_{2,k} \le \cdots \le a_{m,k}$ and $b_{1,k} \ge b_{2,k} \ge \cdots \ge b_{m,k}$ for each $k\in [r]$. 
Then 
$$
\sum_{i=1}^m \frac{1}{\prod_{k=1}^r \binom{a_{i,k}+b_{i,k}}{a_{i,k}}} \le 1.
$$
\end{theorem}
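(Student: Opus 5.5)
We adapt the Scott--Wilmer exterior algebra argument behind Theorem \ref{SW2021}, applying it blockwise and combining the blocks through tensor products.

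\textbf{Setup.} For each $k$, let $n_k=\dim V^{(k)}$. For a generic linear map $\phi_k : V^{(k)}\to W_k$, with $W_k$ of dimension $N_k$ chosen large enough (for instance $N_k=\max_i(a_{i,k}+b_{i,k})$), we lose nothing. A generic projection preserves the condition $A_i\cap B_i=0$ when $a_{i,k}+b_{i,k}\le N_k$. It also preserves the nonzero intersections, since these are witnessed inside a single block. Indeed, $A_i=\bigoplus_k(A_i\cap V^{(k)})$ and $B_j$ split the same way, so $A_i\cap B_j=\bigoplus_k (A_i\cap B_j\cap V^{(k)})$. Hence for $i<j$ there is some block $k$ with $A_i^{(k)}\cap B_j^{(k)}\neq 0$, and for each $i$ and every $k$ we have $A_i^{(k)}\cap B_i^{(k)}=0$.

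\textbf{Key steps.}

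1. Recall the mechanism of Scott--Wilmer. Their proof works inside the exterior algebra (possibly over an extension field with generic parameters). To each pair they attach a weight $w_i$ and vectors $x_i$ from the $A_i$-side and $y_i$ from the $B_i$-side, in a suitably graded space, so that the following hold:
\begin{itemize}
\item $\langle x_i,y_j\rangle=0$ for $i<j$;
\item the matrix $(\langle x_i,y_j\rangle)$ is triangular;
\item after normalization, a trace-type or probabilistic counting yields $\sum_i 1/\binom{a_i+b_i}{a_i}\le 1$. Monotonicity is used so that lower-degree elements can be lifted to a common degree by wedging with generic vectors.
\end{itemize}
Concretely, I would use their form: a generic random vector space $U$ (or random flag), with the event $E_i$ that a random subspace of the right dimension, restricted to $A_i\oplus B_i$, ``separates'' $A_i$ from $B_i$. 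The events are then shown to be disjoint using the exterior algebra, and $\Pr(E_i)=1/\binom{a_i+b_i}{a_i}$.

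2. Multipart version. Run step 1 independently in each block $V^{(k)}$ with independent generic data. Define $E_i=\bigcap_k E_{i,k}$, whose probability factorizes as $\prod_k 1/\binom{a_{i,k}+b_{i,k}}{a_{i,k}}$ by independence. For the algebra, take the tensor product over $k$ of the exterior algebras $\bigwedge V^{(k)}$; this is isomorphic to $\bigwedge V$ since the sum is direct. Set $x_i=\bigotimes_k x_{i,k}$ and $y_i=\bigotimes_k y_{i,k}$.

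3. The pairing factorizes: $\langle x_i,y_j\rangle=\prod_k\langle x_{i,k},y_{j,k}\rangle$. For $i<j$, some block has $A_i^{(k)}\cap B_j^{(k)}\ne0$, which kills that factor. For $i=j$, every factor is nonzero. The blockwise monotonicity hypotheses are exactly what is needed in each block for the degree-lifting step of Scott--Wilmer to be compatible with the ordering. The disjointness/independence argument then gives $\sum_i\Pr(E_i)\le 1$.

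\textbf{Main obstacle.} The hardest part is the disjointness of the events $E_i$ in step 2. In Scott--Wilmer this relies on a single chain of degrees being monotone. Here the degrees vary per block, and different $i<j$ are distinguished in different blocks. I would try to show directly the following: if $E_i\cap E_j$ occurs with $i<j$, pick a block $k$ with $A_i^{(k)}\cap B_j^{(k)}\neq0$, and apply the single-block Scott--Wilmer disjointness lemma in that block alone. That lemma uses only $a_{i,k}\le a_{j,k}$, $b_{i,k}\ge b_{j,k}$ and the intersection in block $k$, which is exactly the blockwise hypothesis. If the lemma in their paper is not phrased locally like this, I would reprove it in that local form.
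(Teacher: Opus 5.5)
\textbf{There is a genuine gap.} It sits in Step 1, which is supposed to carry the whole weighted inequality. The Scott--Wilmer bound is not proved by exhibiting disjoint events $E_i$ with $\Pr(E_i)=1/\binom{a_i+b_i}{a_i}$. You never define such events for real subspaces. Worse, for skew systems disjointness fails even in the most natural model. Take $r=1$ and the sets $A_1=\{x\}$, $B_1=\{z\}$, $A_2=\{y\}$, $B_2=\{x\}$. This is a skew system with $a_i=b_i=1$, hence monotone. Yet the random order $y<x<z$ puts $A_1$ before $B_1$ and $A_2$ before $B_2$. So the ``single-block disjointness lemma'' you plan to apply block by block in your main obstacle does not exist, and there is nothing to localize.

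Step 3 cannot substitute for it. A triangular pairing $\langle x_i,y_j\rangle$ only shows that the $x_i$ are linearly independent. When all $(a_{i,k},b_{i,k})$ are constant, this bounds $m$ by the dimension of one graded piece; that is Alon's argument and gives Corollary~\ref{cr2}. It gives no control of $\sum_i 1/\prod_k\binom{a_{i,k}+b_{i,k}}{a_{i,k}}$ once the $x_i$ sit in different degrees. In particular, your monotonicity hypotheses are never actually used. Also, projecting each block once to a fixed dimension $N_k$ could at best produce $\binom{N_k}{a_{i,k}}$ in the denominators, not $\binom{a_{i,k}+b_{i,k}}{a_{i,k}}$.

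\textbf{What is missing is a normalized-dimension growth argument.} The paper proceeds as follows.
\begin{enumerate}
\item It sets $W_{i+1}^{(k)}=\mathrm{span}\{W_i^{(k)}\wedge\bigwedge^{a_{i+1,k}-a_{i,k}}V^{(k)},\,v_{A_{i+1,k}}\}$.
\item It projects onto the first $n_{i,k}=a_{i,k}+b_{i,k}$ vectors of a generic basis of $V^{(k)}$.
\item It shows blockwise that the normalized dimension does not drop from step $i$ to step $i+1$ (Claim~\ref{cla2}). This uses Lemmas~\ref{SW1}, \ref{ad3}, \ref{ad3-2} in the three cases $n_{i+1,k}=n_{i,k}$, $n_{i+1,k}<n_{i,k}$, $n_{i+1,k}>n_{i,k}$, and it is exactly where $a_{i,k}\le a_{i+1,k}$ and $b_{i,k}\ge b_{i+1,k}$ enter.
\item It shows that adjoining $\pi(v_{A_{i+1}})$ adds at least one dimension (Claim~\ref{cla1}). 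The reason is that everything generated by earlier $v_{A_h}$ is killed by $\pi(v_{B_{i+1}})$, while $\pi(v_{A_{i+1}})\wedge\pi(v_{B_{i+1}})\neq 0$.
\item Telescoping then gives the bound.
\end{enumerate}

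Your Step 3 remark deserves credit: the vanishing for $i<j$ needs all block factors to come from the same index $i$, and that is the genuinely delicate point. The paper's Claim~\ref{cla1} in fact mishandles it. The space $Y_i=\bigwedge_k Y_i^{(k)}$ contains mixed products $\bigwedge_k v_{A_{h_k,k}}$ with different $h_k$, and these need not be killed by $v_{B_{i+1}}$. For example, take $V^{(1)}=\langle e_1,e_2\rangle$, $V^{(2)}=\langle e_3,e_4\rangle$, and
\begin{itemize}
\item $(A_1,B_1)=(\langle e_1,e_3\rangle,\langle e_2,e_4\rangle)$,
\item $(A_2,B_2)=(\langle e_2,e_4\rangle,\langle e_1,e_3\rangle)$,
\item $(A_3,B_3)=(\langle e_2,e_3\rangle,\langle e_1,e_4\rangle)$.
\end{itemize}
Here all projections are identities and $\dim Z_3=\dim Y_2=4$, which contradicts Claim~\ref{cla1}. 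The final inequality, $3/4\le 1$, still holds in this example.

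So a correct write-up along either line should keep the non-product span $\mathrm{span}\{v_{A_h}\wedge\omega: h\le i\}$ in multidegree $(a_{i,1},\dots,a_{i,r})$. That in turn means proving multigraded (fibrewise) versions of the LYM-type lemmas instead of relying on Lemma~\ref{ad1}.
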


Theorem \ref{Main3} recovers Theorem \ref{SW2021} by setting $r=1$. Moreover, Theorem \ref{Main3} implies  the following uniform case, which generalizes both Theorem \ref{thm-L77} and Theorem \ref{A85}.

\begin{corollary}\label{cr2} 
Let $V = V^{(1)} \oplus V^{(2)} \oplus \cdots \oplus V^{(r)}$ be a finite-dimensional real vector space, and let  $\mathcal{P}=\{(A_i, B_i)\}_{i=1}^m$ be a skew Bollob\'as system of subspaces of $V$ satisfying $
A_i = \bigoplus_{k=1}^r (A_i \cap V^{(k)})$ and $ B_i = \bigoplus_{k=1}^r (B_i \cap V^{(k)})$ for each $i\in [m]$. Suppose that
$ \dim (A_i \cap V^{(k)})=a_k$ and $ \dim (B_i \cap V^{(k)})=b_k$ for every $i\in [m]$ and $k\in [r]$.
Then 
$$m\leq \prod_{k=1}^r {a_k+b_k \choose a_k}.$$
\end{corollary}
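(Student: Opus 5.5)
Corollary \ref{cr2} is the uniform special case of Theorem \ref{Main3}, and the plan is to read it off directly. In the setting of Corollary \ref{cr2} we have $a_{i,k}=\dim(A_i\cap V^{(k)})=a_k$ and $b_{i,k}=\dim(B_i\cap V^{(k)})=b_k$ for every $i\in[m]$ and $k\in[r]$. The first step is to check that the monotonicity hypotheses of Theorem \ref{Main3} are satisfied. For each fixed $k$ the sequences $(a_{i,k})_{i=1}^m$ and $(b_{i,k})_{i=1}^m$ are constant, so they are trivially non-decreasing and non-increasing respectively. The remaining hypotheses carry over unchanged: the decomposition $V=\bigoplus_k V^{(k)}$, the splitting of each $A_i$ and $B_i$ along this decomposition, and the skew Bollob\'as condition.

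The second step is to apply Theorem \ref{Main3}, which gives
$$
\sum_{i=1}^m \frac{1}{\prod_{k=1}^r \binom{a_k+b_k}{a_k}} \le 1 .
$$
Every summand equals the same positive number $1/\prod_{k=1}^r\binom{a_k+b_k}{a_k}$, so the left-hand side is $m/\prod_{k=1}^r\binom{a_k+b_k}{a_k}$. Rearranging yields $m\le\prod_{k=1}^r\binom{a_k+b_k}{a_k}$, which is the claimed bound.

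No step here is a genuine obstacle; all the difficulty lies in Theorem \ref{Main3}, which we may assume. The only point needing care is that constant sequences satisfy the non-strict inequalities $a_{1,k}\le\cdots\le a_{m,k}$ and $b_{1,k}\ge\cdots\ge b_{m,k}$, and they do. As a remark, the subset version, Theorem \ref{A85}, then follows by the standard embedding described after Theorem \ref{thm-L77}. Take $V^{(k)}=\mathrm{span}\{e_p:p\in X_k\}$ and map each set to the span of the corresponding basis vectors. Disjointness and nonempty intersection of sets then become trivial and nontrivial intersection of the spans, and each span splits along the blocks $V^{(k)}$.
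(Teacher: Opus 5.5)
Your proposal is correct and follows the same route as the paper, which obtains Corollary~\ref{cr2} by specializing Theorem~\ref{Main3} to constant dimensions. Constant sequences satisfy the non-strict monotonicity hypotheses, and the resulting sum equals $m/\prod_{k=1}^r\binom{a_k+b_k}{a_k}$, which gives the bound.
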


Theorem \ref{Main3} yields  the following result for set systems via a standard construction.

\begin{corollary}\label{cr3} 
Let $[n]$ be the disjoint union of some sets $X_1,\ldots, X_r$.
Let  $\mathcal{P}=\{(A_i, B_i)\}_{i=1}^m$  be a skew Bollob\'as system of subsets of $[n]$. Suppose that $|A_i\cap X_k|=a_{i,k}$ and $|B_i\cap X_k|=b_{i,k}$ for every $ i\in [m]$ and $k\in [r]$, and that for each $k\in [r]$,
$
a_{1,k} \le a_{2,k} \le \cdots \le a_{m,k}$ and $b_{1,k} \ge b_{2,k} \ge \cdots \ge b_{m,k}.
$
Then 
$$
\sum_{i=1}^m \frac{1}{\prod_{k=1}^r \binom{a_{i,k}+b_{i,k}}{a_{i,k}}} \le 1.
$$
\end{corollary}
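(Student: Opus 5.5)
The plan is to derive Corollary \ref{cr3} from Theorem \ref{Main3} by the same "set-to-subspace" encoding used after Theorem \ref{thm-L77}. Take $V=\mathbb{R}^n$ with standard basis $e_1,\ldots,e_n$. For each $k\in[r]$ put $V^{(k)}=\mathrm{span}\{e_p: p\in X_k\}$. Since $[n]$ is the disjoint union of $X_1,\ldots,X_r$, we have $V=V^{(1)}\oplus\cdots\oplus V^{(r)}$. For each $i$ we map $A_i\mapsto A_i'=\mathrm{span}\{e_p:p\in A_i\}$ and $B_i\mapsto B_i'=\mathrm{span}\{e_q:q\in B_i\}$.

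The first step is the basic coordinate-subspace identity. For subsets $S,T\subseteq[n]$, write $E_S=\mathrm{span}\{e_p:p\in S\}$. Then $E_S\cap E_T=E_{S\cap T}$, because a vector lies in $E_S$ exactly when its support is contained in $S$. Consequently $\dim(E_S\cap E_T)=|S\cap T|$. Applying this gives $\dim(A_i'\cap B_j')=|A_i\cap B_j|$, so $\mathcal{P}'=\{(A_i',B_i')\}_{i=1}^m$ is a skew Bollob\'as system of subspaces whenever $\mathcal{P}$ is one of subsets.

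The second step checks the direct-sum hypothesis of Theorem \ref{Main3}. The identity gives $A_i'\cap V^{(k)}=E_{A_i\cap X_k}$. Since $A_i$ is the disjoint union of the sets $A_i\cap X_k$, it follows that $A_i'=\bigoplus_k E_{A_i\cap X_k}=\bigoplus_k (A_i'\cap V^{(k)})$. The same argument works for $B_i'$. Moreover $\dim(A_i'\cap V^{(k)})=|A_i\cap X_k|=a_{i,k}$ and $\dim(B_i'\cap V^{(k)})=b_{i,k}$, so the monotonicity assumptions carry over unchanged.

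Theorem \ref{Main3} then gives the stated inequality verbatim. There is no real obstacle here; the only point needing care is the identity $E_S\cap E_T=E_{S\cap T}$, which is immediate from comparing coordinates.
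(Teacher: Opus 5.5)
Your proposal is correct and follows the paper's route: the paper obtains Corollary~\ref{cr3} from Theorem~\ref{Main3} by the same standard encoding of sets as coordinate subspaces (described after Theorem~\ref{thm-L77}), with $V^{(k)}$ spanned by the basis vectors indexed by $X_k$. Your check that $E_S\cap E_T=E_{S\cap T}$ yields both the skew Bollob\'as conditions and the block decomposition $A_i'=\bigoplus_k (A_i'\cap V^{(k)})$, and so fills in details the paper leaves implicit.
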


\subsection{Projective subspaces and Hegedüs' conjecture}
Let $\mathbb{F}$ be an arbitrary field and let $V$ be an $(n+1)$‑dimensional vector space over $\mathbb{F}$ with $n\geq 0$.
The \textit{projective space} of dimension $n$ is defined as  
$
\mathbb{P}(V)=\bigl(V \setminus \{0\}\bigr) / \sim 
$, where the equivalence relation   
$u \sim v$ if and only if $\exists\ \lambda\in\mathbb{F}\setminus \{0\}\ \text{such that}\ u = \lambda v.
$
An element $[v] \in \mathbb{P}(V)$ is called a \textit{projective point}, which corresponds to the one‑dimensional subspace  $\mathrm{span}\{v\}$.
A subset $A \subseteq \mathbb{P}(V)$ is called a \textit{projective subspace} if there exists a  subspace $U \subseteq V$, $U \neq \{0\}$, such that  
$
A = \mathbb{P}(U)=\{ [u] : u \in U \setminus \{0\} \}.
$ 
In 2015, Hegedüs \cite{H15} proposed the following conjecture for projective subspace.

\begin{conjecture}[Hegedüs \cite{H15}]\label{cj1} Let $W$ be an $n$-dimensional projective space over an arbitrary field $\mathbb{F}$ and let $\mathcal{P}=\{(A_i, B_i)\}_{i=1}^m$  be a skew Bollob\'as system of projective subspaces of $W$, i.e., $A_i\cap B_i=\emptyset \text{ for every } i \in [m]$; 
 and 
$A_i\cap B_j\neq\emptyset \text{ for all } 1\leq i< j \leq m$. Then  
$$
m\le 2^{\,n+1}-2 .
$$ 
\end{conjecture}
The term  skew Bollob\'as system is the same as the classical set-pair theorem. Here the objects are projective subspaces and the intersection conditions are imposed in the projective sense.  
The second main result of this paper is an affirmative solution of Hegedüs’s conjecture.

\begin{theorem}\label{Main4} Let $W$ be an $n$-dimensional projective space over an arbitrary field $\mathbb{F}$ and let $\mathcal{P}=\{(A_i, B_i)\}_{i=1}^m$  be a skew Bollob\'as system of projective subspaces of $W$. Then  
$$
m\le 2^{\,n+1}-2 .
$$ 
\end{theorem}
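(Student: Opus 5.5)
The plan is to prove Theorem \ref{Main4} by Lov\'asz's exterior algebra method, using the whole exterior algebra $\Lambda(V)$ rather than a single graded piece. The bound $2^{n+1}-2$ should then come from the degrees that the relevant wedge products can occupy.

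\emph{Setup.} Write $V$ for the underlying $(n+1)$-dimensional vector space. Each $A_i=\mathbb{P}(U_i)$ and $B_i=\mathbb{P}(W_i)$ for nonzero subspaces $U_i,W_i\subseteq V$. The projective conditions translate to $U_i\cap W_i=\{0\}$ and $U_i\cap W_j\neq\{0\}$ for $i<j$. Put $a_i=\dim U_i$ and $b_i=\dim W_i$. Then $a_i,b_i\ge 1$ and $a_i+b_i\le n+1$, so $1\le a_i\le n$.

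First I reduce to an infinite field. Replace $\mathbb{F}$ by an infinite extension $\mathbb{K}$, and $V,U_i,W_i$ by $V\otimes\mathbb{K}$, $U_i\otimes\mathbb{K}$, $W_i\otimes\mathbb{K}$. Dimensions of intersections are preserved under base change, since they are computed by ranks of matrices with entries in $\mathbb{F}$. So the skew Bollob\'as conditions survive, and we may assume $\mathbb{F}$ is infinite.

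\emph{Wedge vectors and the pairing.} For each $i$, choose bases of $U_i$ and $W_i$ and let $u_i\in\Lambda^{a_i}V$ and $w_i\in\Lambda^{b_i}V$ be the wedge products of these bases. Since $U_i\cap W_i=\{0\}$, the vector $u_i\wedge w_i$ is a wedge of $a_i+b_i$ linearly independent vectors, hence nonzero. For $i<j$, the subspaces $U_i$ and $W_j$ share a nonzero vector, so $u_i\wedge w_j=0$.

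The degrees $a_i+b_i$ vary, so I cannot pair into one fixed top degree. Instead I use a generic linear functional $\varphi:\Lambda(V)\to\mathbb{F}$ with $\varphi(u_i\wedge w_i)\neq 0$ for every $i$. Such a $\varphi$ exists because $\mathbb{F}$ is infinite: each condition excludes a proper hyperplane of the dual space, and a vector space over an infinite field is not a finite union of proper hyperplanes.

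Set $M_{ij}=\varphi(u_i\wedge w_j)$. Then $M_{ij}=0$ for $i<j$ and $M_{ii}\ne 0$, so $M$ is triangular with nonzero diagonal.

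\emph{Independence.} Suppose $\sum_i c_iu_i=0$ with some $c_i\ne0$. Let $i_0$ be the smallest index with $c_{i_0}\ne 0$. Wedge the relation with $w_{i_0}$ and apply $\varphi$. Every term with $i>i_0$ vanishes because $\varphi(u_i\wedge w_{i_0})=M_{i i_0}$ with $i_0<i$, while the terms with $i<i_0$ have $c_i=0$. This leaves $c_{i_0}M_{i_0i_0}=0$, a contradiction. Hence $u_1,\dots,u_m$ are linearly independent.

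\emph{Counting.} Each $u_i$ lies in $\bigoplus_{d=1}^{n}\Lambda^dV$, because $1\le a_i\le n$. This space has dimension $2^{n+1}-\binom{n+1}{0}-\binom{n+1}{n+1}=2^{n+1}-2$. Therefore $m\le 2^{n+1}-2$.

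The only delicate points are the base change to an infinite field and the choice of $\varphi$ handling all the different degrees at once. The key observation is that triangularity of $M$ needs only a single functional on the full algebra, not a degree-matched pairing. The "$-2$" comes from $A_i$ and $B_i$ both being nonempty projective subspaces, which rules out degrees $0$ and $n+1$.
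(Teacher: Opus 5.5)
Your approach is the paper's: pass to the vector subspaces $U_i,W_i$, use $u_i\wedge w_i\neq 0$ and $u_i\wedge w_j=0$ for $i<j$ to get linear independence of $u_1,\dots,u_m$ by a triangularity argument, then count $\dim\bigoplus_{d=1}^{n}\bigwedge^d V=2^{n+1}-2$. However, your explicit independence step has the index reversed and does not work as written. Your matrix satisfies $M_{ij}=0$ for $i<j$. So in the relation $\sum_i c_i M_{i i_0}=0$, the entries that vanish automatically are those with $i<i_0$, not $i>i_0$. The entries $M_{i i_0}$ with $i>i_0$ lie below the diagonal, and the skew condition says nothing about $U_i\cap W_{i_0}$ for $i>i_0$, so they are in general nonzero. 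With $i_0$ the smallest index having $c_{i_0}\neq 0$, you cannot conclude anything. The fix is to take $i_0$ to be the \emph{largest} such index. Then the terms with $i>i_0$ have $c_i=0$, the terms with $i<i_0$ have $M_{i i_0}=0$, and you are left with $c_{i_0}M_{i_0 i_0}=0$. Equivalently, $c^{\top}M=0$ with $M$ triangular with nonzero diagonal forces $c=0$; you had already noted this structure.

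The base change to an infinite field and the single generic functional $\varphi$ are correct but unnecessary. The triangular criterion (Lemma \ref{TC} in the paper) allows an arbitrary target space. The paper applies it directly to the vector-valued map $x\mapsto x\wedge w_j\in\bigwedge V$: from $\sum_i c_iu_i=0$, wedging with $w_{i_0}$ (for $i_0$ the largest index with $c_{i_0}\neq 0$) gives $c_{i_0}\,u_{i_0}\wedge w_{i_0}=0$ in $\bigwedge V$, so $c_{i_0}=0$. If you prefer scalars, choose a separate functional $\varphi_j$ with $\varphi_j(u_j\wedge w_j)\neq 0$ for each $j$ and set $M_{ij}=\varphi_j(u_i\wedge w_j)$. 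Such $\varphi_j$ exist over every field. So your ``key observation'' that one functional must handle all degrees simultaneously is not needed, and the field $\mathbb{F}$ can be kept as it is. With the index corrected, your proof is complete and coincides with the paper's.
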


\subsection{Weighted inequalities for subspace pairs}

The weighted version in Theorem \ref{weighted-B65} does not hold under the weaker skew intersection condition, since Babai and Frankl \cite[Exercise 5.1.1]{BF1992} provided an example by considering all pairs $(A,[n]\setminus A)$, where $A$ runs over all subsets of $[n]$, arranged in decreasing order of their size. Then $\mathcal{P}=\{(A_i,[n]\setminus A_i)\}_{i=1}^{2^n}$ forms a skew Bollob\'as system of subsets of $[n]$, and $\sum_{i=1}^{2^n} 1/ {a_i +b_i \choose a_i}=n+1$.

Recently, Hegedüs and Frankl \cite{HF24}  provided an interesting variant of Theorem \ref{weighted-B65} by showing that if $\mathcal{P}=\{(A_i, B_i)\}_{i=1}^m$ is a skew Bollob\'as system of subsets of $[n]$, then 
\begin{equation} \label{eq-HF} \sum_{i=1}^m \frac{1}{\binom{a_i + b_i}{a_i}} \leq n + 1 .
\end{equation}
Using a probabilistic method, Yue \cite{Y261} removed the explicit dependence on $n$. 

\begin{theorem}[Yue \cite{Y261}]\label{HFY} 
Let  $\mathcal{P}=\{(A_i, B_i)\}_{i=1}^m$  be a skew Bollob\'as system of subsets of $[n]$. Suppose that $|A_i|=a_i$ and $|B_i|=b_i$ for every $ i\in [m]$. Then    
$$
\sum_{i=1}^m \frac{1}{\bigl(a_i +b_i+1\bigr)
\binom{a_i + b_i}{a_i}} \leq 1 .
$$
\end{theorem}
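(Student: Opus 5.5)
The plan is the probabilistic method: a random linear order on a ground set enlarged by one extra element, with one event per pair $(A_i,B_i)$. Adjoin a new element $0$ to $[n]$ and take a uniformly random linear order $\sigma$ on $[0,n]=\{0,1,\ldots,n\}$. For each $i\in[m]$, let $E_i$ be the event that, restricted to $A_i\cup B_i\cup\{0\}$, the order places all of $A_i$ first, then the element $0$, then all of $B_i$. Since $A_i\cap B_i=\emptyset$, the set $A_i\cup B_i\cup\{0\}$ has exactly $a_i+b_i+1$ elements. Among the $(a_i+b_i+1)!$ equally likely relative orders of these elements, exactly $a_i!\,b_i!$ realise $E_i$. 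Hence
$$\Pr(E_i)=\frac{a_i!\,b_i!}{(a_i+b_i+1)!}=\frac{1}{(a_i+b_i+1)\binom{a_i+b_i}{a_i}}.$$
It therefore suffices to show that the events $E_1,\ldots,E_m$ are pairwise disjoint, since then $\sum_i \Pr(E_i)\le 1$.

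To prove disjointness, suppose $E_i$ and $E_j$ both occur for some $i<j$. By the skew condition there is an element $x\in A_i\cap B_j$. Because $x\in A_i$ and $E_i$ holds, $x$ precedes $0$ in $\sigma$. Because $x\in B_j$ and $E_j$ holds, $x$ follows $0$ in $\sigma$. This is a contradiction.

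The only delicate point is that the skew hypothesis gives intersections only for $i<j$, not in both directions. The argument above never needs the other direction, because the single pivot element $0$ splits $A$-elements from $B$-elements uniformly for every pair. This is exactly what the extra factor $a_i+b_i+1$ pays for. I expect no real obstacle beyond verifying the probability computation. One edge case is $a_i=0$ or $b_i=0$: then $E_i$ says only that $0$ comes first or last among $A_i\cup B_i\cup\{0\}$, and the formula still holds.
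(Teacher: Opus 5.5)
Your proof is correct and complete. The events $E_i$ have the stated probabilities, and for $i<j$ any $x\in A_i\cap B_j$ would have to lie both before and after the pivot $0$, so the $E_i$ are pairwise disjoint.

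This is essentially the probabilistic approach the paper attributes to Yue, and the paper does not take this route. It obtains the statement as the coordinate-subspace special case of Theorem~\ref{tm1}, which it proves deterministically:
\begin{enumerate}
\item A non-full pair $(A_i,B_i)$ is replaced, for some $x\notin A_i+B_i$, by $(A_i\oplus\langle x\rangle,B_i)$ followed by $(A_i,B_i\oplus\langle x\rangle)$. This keeps the system skew and preserves the weight, because $\frac{1}{(s+2)\binom{s+1}{a+1}}+\frac{1}{(s+2)\binom{s+1}{a}}=\frac{1}{(s+1)\binom{s}{a}}$.
\item After finitely many such steps every pair satisfies $a_i+b_i=n$.
\item Lov\'asz's bound (Theorem~\ref{thm-L77}) then allows at most $\binom{n}{a}$ pairs with $a_i=a$, so the total weight is at most $\frac{1}{n+1}\sum_{a=0}^{n}1=1$.
\end{enumerate}

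The two proofs are really the same computation seen differently. The splitting identity is your decomposition of $E_{(A,B)}$ into two disjoint events, according to whether $x$ falls before or after $0$. For pairs with $A_i\cup B_i=[n]$, your event says exactly that the set of elements preceding $0$ is $A_i$. Disjointness there amounts to the $A_i$ being distinct, and that is what Lov\'asz's theorem replaces when there are infinitely many subspaces of each dimension.

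Your route is shorter, self-contained, and explains the factor $a_i+b_i+1$ directly. However, it needs a ground set whose elements can be randomly ordered, so it has no direct analogue for subspaces. The paper's splitting argument costs termination bookkeeping (a potential function and the bound $m\le 2^n$) and the exterior-algebra input. In return it works for subspaces of $\mathbb{R}^n$, which is what yields Corollary~\ref{cr1} and, with $d$-fold splitting, Theorem~\ref{tm2}.
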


As the third result of this paper, we extend Theorem \ref{HFY} to the following setting for subspaces, which substantially removes the monotonicity condition of Theorem \ref{SW2021}.

\begin{theorem}\label{tm1} 
Let  $\mathcal{P}=\{(A_i, B_i)\}_{i=1}^m$  be a skew Bollob\'as system of subspaces of $V\cong \mathbb{R}^n$. Suppose that $\dim (A_i)=a_i$ and $\dim (B_i)=b_i$ for every $ i\in [m]$. Then  
$$
\sum_{i=1}^m \frac{1}{\bigl(a_i +b_i+1\bigr)
\binom{a_i + b_i}{a_i}} \leq 1 .
$$
\end{theorem}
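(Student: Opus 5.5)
The plan rests on the Beta-integral identity
$$
\frac{1}{(a+b+1)\binom{a+b}{a}}=\frac{a!\,b!}{(a+b+1)!}=\int_0^1 x^{a}(1-x)^{b}\,dx .
$$
It therefore suffices to prove a discretized pointwise inequality $\sum_i p_i(x)\le 1$, where $p_i(x)$ is a ``probability-like'' weight that tends to $x^{a_i}(1-x)^{b_i}$, and then to integrate.

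For set systems this is exactly Yue's argument. Put each element of $[n]$ into a random set $S$ independently with probability $x$, and let $E_i=\{A_i\subseteq S,\ B_i\cap S=\emptyset\}$. Suppose $E_i\cap E_j\neq\emptyset$ for some $i<j$. Then $A_i\subseteq S$ and $B_j\cap S=\emptyset$, so $A_i\cap B_j=\emptyset$, which contradicts the skew condition. Hence the events $E_i$ are pairwise disjoint, giving $\sum_i x^{a_i}(1-x)^{b_i}\le 1$, and integrating over $x\in[0,1]$ yields Theorem \ref{HFY}. The task is to find a linear-algebraic replacement for the random set $S$ and for the disjointness of the $E_i$.

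For subspaces I would replace ``random subset with density $x$'' by a discrete family of exterior-algebra tests, as in Lov\'asz's and Scott--Wilmer's proofs. Fix a large integer $N$ and enlarge $V$ to $\widetilde V=V\oplus\mathbb{R}^N$. For each level $k\in[0,n+N]$ and each pair with $a_i\le k$, I would form
$$
\alpha_i^{(k)}=\Bigl(\bigwedge A_i\Bigr)\wedge \gamma_{i}^{(k)}\in\textstyle\bigwedge^k \widetilde V ,
$$
and a matching dual element $\beta_i^{(k)}$ built from $B_i$ together with a complementary generic part. Here $\gamma_i^{(k)}$ is a wedge of $k-a_i$ generic vectors, chosen (following the Scott--Wilmer device) from a fixed generic flag in the extra coordinates. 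The goal is to prove the triangularity
$$
\alpha_i^{(k)}\wedge\beta_j^{(k)}=0 \quad (i<j), \qquad \alpha_i^{(k)}\wedge\beta_i^{(k)}\neq 0 ,
$$
for all pairs $i$ that ``fit'' at level $k$.

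Triangularity would make the fitting $\alpha_i^{(k)}$ linearly independent. I would then use a weighted count: let $w_k$ be the number of ways the generic part can be chosen, normalized by $\binom{n+N}{k}$. This should give, for each $k$,
$$
\sum_i \frac{\binom{N}{k-a_i}\ \text{(up to the appropriate correction for } b_i)}{\binom{n+N}{k}}\le 1 ,
$$
which is the linear-algebra analogue of $\sum_i\Pr(E_i)\le1$. In the set case this weight is exactly the hypergeometric probability that a uniform random $k$-subset of $[n+N]$ contains $A_i$ and avoids $B_i$. Averaging these inequalities uniformly over $k$ and letting $N\to\infty$ turns the hypergeometric weights into $\int_0^1x^{a_i}(1-x)^{b_i}\,dx$, which gives the theorem. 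An alternative route to the same bound is to average over $k$ first and use $\frac{1}{M+1}\sum_{k}\binom{a+b}{\cdot}$-type identities directly. Since Theorem \ref{SW2021} needs monotonicity, it cannot be invoked as a black box; the generic padding has to take the place of that hypothesis.

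The main obstacle is the vanishing $\alpha_i^{(k)}\wedge\beta_j^{(k)}=0$ for $i<j$ in the absence of any monotonicity of $a_i$ and $b_i$. This is precisely where Babai--Frankl's counterexample shows that the naive weighted statement fails. I would first try to make $\gamma_i^{(k)}$ depend only on $k-a_i$ through initial segments of one fixed generic flag $f_1,f_2,\dots$ in $\mathbb{R}^N$, and to make $\beta_j^{(k)}$ use the terminal segment of the same flag. Then, whenever the paddings of $i$ and $j$ would overlap, the wedge vanishes for dimension reasons. When they do not overlap, the product is $\bigwedge(A_i+B_j)$ times a padding term, and this vanishes because $A_i\cap B_j\neq0$. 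I would also check that the count of fitting $k$ per pair reproduces exactly $(a_i+b_i+1)^{-1}\binom{a_i+b_i}{a_i}^{-1}$ in the limit.
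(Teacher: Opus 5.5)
There is a genuine gap at the one step that carries the weight. Your averaging framework (a count in each $\bigwedge^k$, then the Beta/permutation identity) is sound, but the per-level weighted inequality is never established, and the construction you describe cannot establish it.

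If $\gamma_i^{(k)}$ is a single wedge determined by $k-a_i$ through initial segments of one fixed flag, you get one vector per pair and level. Triangularity then only bounds the \emph{number} of pairs fitting at level $k$ by $\binom{n+N}{k}$. That is an unweighted count, from which no factor like $\binom{\cdot}{k-a_i}$ can emerge.

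Suppose instead you use all $\binom{N}{k-a_i}$ paddings $v_{A_i}\wedge e_T$ with $T\subseteq[N]$, and duals $v_{B_i}\wedge e_{[N]\setminus T}$. Triangularity does hold and gives $\sum_i\binom{N}{k-a_i}\le\binom{n+N}{k}$, but this inequality never sees $b_i$. By the exact identity $\frac{1}{M+1}\sum_{k=0}^{M}\binom{M-s}{k-a}\big/\binom{M}{k}=\frac{a!\,(s-a)!}{(s+1)!}$ (valid for $0\le a\le s\le M$), its average over $k$ equals $\frac{a_i!\,(n-a_i)!}{(n+1)!}$ for every $N$. So letting $N\to\infty$ only yields $\sum_i 1/\bigl((n+1)\binom{n}{a_i}\bigr)\le 1$, which is strictly weaker than the theorem whenever $a_i+b_i<n$.

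Your phrase ``up to the appropriate correction for $b_i$'' hides exactly the missing idea. The padding must use directions of $V$ outside $A_i+B_i$; these depend on $i$ and cannot come from a fixed flag in extra coordinates.

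You have also misplaced the difficulty. Vanishing for $i<j$ is automatic for any $\alpha$ with factor $v_{A_i}$ and any $\beta$ with factor $v_{B_j}$, since $v_{A_i}\wedge v_{B_j}=0$. Monotonicity plays no role once each level $k$ is counted separately. The real work is producing $\binom{n-a_i-b_i}{k-a_i}$ independent vectors from each single pair.

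The repair needs neither extra coordinates nor a limit. Put $s_i=a_i+b_i$, and let $C_i$ be a complement of $A_i\oplus B_i$ in $V$ with basis $c^{(i)}_1,\dots,c^{(i)}_{n-s_i}$. Write $c^{(i)}_T$ for the wedge of the $c^{(i)}_t$, $t\in T$. For $k\in[0,n]$ and $T\subseteq[n-s_i]$ with $|T|=k-a_i$, set $\alpha_{i,T}=v_{A_i}\wedge c^{(i)}_T\in\bigwedge^k V$ and $\beta_{i,T}=v_{B_i}\wedge c^{(i)}_{[n-s_i]\setminus T}$.

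Order the indices $(i,T)$ lexicographically. Then $\alpha_{i,T}\wedge\beta_{j,T'}=0$ if $i<j$ (it contains $v_{A_i}\wedge v_{B_j}$), and also if $i=j$ and $T\neq T'$ (some $c^{(i)}_t$ repeats). Meanwhile $\alpha_{i,T}\wedge\beta_{i,T}\neq 0$ because $A_i\oplus B_i\oplus C_i=V$. Lemma \ref{TC} gives $\sum_i\binom{n-s_i}{k-a_i}\le\binom{n}{k}$ for each $k$. Dividing by $(n+1)\binom{n}{k}$, summing over $k\in[0,n]$, and applying the identity with $M=n$ gives exactly $\sum_i\frac{a_i!\,b_i!}{(s_i+1)!}\le 1$.

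This repaired argument is essentially the paper's proof. There, the splitting operation replaces each $(A_i,B_i)$ by full pairs, exactly $\binom{n-s_i}{k-a_i}$ of them with first component of dimension $k$, preserving the skew property and the weight. Theorem \ref{thm-L77} is then applied separately at each level $k$.
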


By the standard construction of mapping sets to  subspaces, Theorem \ref{tm1} implies Theorem \ref{HFY}. 
Our proof in the subspace version is motivated by the set version of Hegedüs and Frankl \cite{HF24}, where we reduce the present problem to the previous Lov\'{a}sz theorem by a clever argument on maximality assumption.  
This is different from the probabilistic proof of Theorem \ref{HFY}.

In 1984, Füredi \cite{F84} employed so-called general position arguments  to generalize Theorem \ref{thm-L77} to the $t$-intersecting setting. 
Let $\mathcal{P}=\{(A_i, B_i)\}_{i=1}^m$  be a collection of pairs of subspaces of  $V$. For a non-negative integer $t$, we say that  
$\mathcal{P}$  is  a \textit{skew Bollob\'as $t$-system}  if 
 $\dim(A_i \cap B_i) \leq t \text{ for any } i \in [m]$ and
$\dim(A_i \cap B_j) > t \text{ for any } 1\leq i< j \leq m$.
The case $t=0$ reduces to the ordinary skew Bollob\'as  system. We refer to \cite{SW2021,YKXZG2022} for related results. 
Using Füredi's general position arguments \cite{F84}, we extend Theorem \ref{tm1}  to this $t$-intersecting setting.

\begin{corollary} \label{cr1}
Let  $\mathcal{P}=\{(A_i, B_i)\}_{i=1}^m$  be a skew Bollob\'as $t$-system of subspaces of $V\cong \mathbb{R}^n$. Suppose that $\dim (A_i)=a_i$ and $\dim (B_i)=b_i$ for every $ i\in [m]$, and that $a_m, b_1\geq t$. Then  
$$
\sum_{i=1}^m \frac{1}{\bigl(a_i +b_i-2t+1\bigr)
\binom{a_i + b_i-2t}{a_i-t}} \leq 1 .
$$
\end{corollary}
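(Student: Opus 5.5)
We reduce Corollary \ref{cr1} to Theorem \ref{tm1} by Füredi's general position argument. The idea is to cut every $A_i$ and $B_i$ by a generic subspace of codimension $t$, which lowers all relevant intersection dimensions by exactly $t$.

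Let $N=\dim V=n$ and choose a subspace $W\subseteq V$ of codimension $t$ in general position with respect to the finitely many subspaces $A_i$, $B_j$, $A_i\cap B_j$ ($i,j\in[m]$). Over $\mathbb{R}$ such $W$ exists, since the bad choices form a proper Zariski-closed subset of the Grassmannian. General position means that for every subspace $U$ in this finite list,
$$\dim(U\cap W)=\max\{\dim U - t,\,0\}.$$
Set $A_i'=A_i\cap W$ and $B_i'=B_i\cap W$.

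We then check the skew Bollob\'as conditions.
\begin{enumerate}[(i)]
\item Since $\dim(A_i\cap B_i)\le t$, we get $\dim(A_i'\cap B_i')=\dim(A_i\cap B_i\cap W)=0$.
\item For $i<j$ we have $\dim(A_i\cap B_j)\ge t+1$, so $\dim(A_i'\cap B_j')\ge 1$.
\end{enumerate}
Hence $\{(A_i',B_i')\}$ is a skew Bollob\'as system in $W\cong\mathbb{R}^{n-t}$.

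Next we need $\dim A_i'=a_i-t$ and $\dim B_i'=b_i-t$, which requires $a_i,b_i\ge t$ for all $i$. The hypothesis $a_m,b_1\ge t$ supplies this.
\begin{enumerate}[(i)]
\item For $i<m$, $A_i\cap B_m$ has dimension $>t$, so $a_i>t$. For $i=m$ we use $a_m\ge t$ directly.
\item Symmetrically, $B_j\supseteq A_1\cap B_j$ gives $b_j>t$ for $j>1$, and $b_1\ge t$ covers $j=1$.
\end{enumerate}
Thus general position yields $\dim A_i'=a_i-t$ and $\dim B_i'=b_i-t$ exactly.

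Applying Theorem \ref{tm1} to the new system, we have
$$(a_i'+b_i'+1)\binom{a_i'+b_i'}{a_i'}=(a_i+b_i-2t+1)\binom{a_i+b_i-2t}{a_i-t},$$
and the claimed inequality follows.

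The main obstacle is justifying the general position claim. This is a standard fact: a generic codimension-$t$ subspace meets a fixed $d$-dimensional subspace in dimension $\max(d-t,0)$. One proves it by choosing $W$ as the kernel of $t$ generic linear functionals and noting that the restriction of these functionals to $U$ has full rank generically, because that condition is a nonvanishing minor, i.e.\ a nonempty Zariski-open condition. Since only finitely many subspaces are involved, a single $W$ works for all of them simultaneously.
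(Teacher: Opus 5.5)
Your proposal is correct and follows the paper's proof. Both intersect everything with an $(n-t)$-dimensional subspace in general position (Füredi's Lemma~\ref{Fle}), use $a_m,b_1\ge t$ together with the $t$-intersection condition to get $a_i,b_i\ge t$, and then apply Theorem~\ref{tm1}. You are slightly more explicit than the paper: you list the intersections $A_i\cap B_j$ among the subspaces to be put in general position, which is exactly what condition (b) needs, and you sketch why such a $W$ exists instead of only citing \cite{F84}.
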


Setting $t = 0$ in Corollary \ref{cr1} reduces to Theorem \ref{tm1}. 
Moreover, the condition $a_m,b_1 \geq t$ is natural in this setting, as it ensures that the binomial coefficients are well-defined.

\subsection{Weighted inequalities for $d$-tuples of subspaces}

The Bollob\'as two families theorem admits a natural extension. Let $d\geq 2$ and $\mathcal{P} = \{(A_i^{(1)},\dots,A_i^{(d)})\}_{i=1}^m$ be a collection of $d$-tuples of subsets of  $[n]$. We say that $\mathcal{P}$  is a \textit{Bollob\'as  system}  if 
\begin{itemize}
\item[(i)]
$ A_i^{(p)} \cap  A_i^{(q)}=\emptyset \text{ for any } i \in [m] \text{ and } 1\leq p<q\leq d$; and 

\item[(ii)]
for any $1\leq i\neq j \leq m$, there exist $1\leq p< q \leq d$ such that
$A_i^{(p)}\cap A_j^{(q)}\neq \emptyset$. 
\end{itemize}
Moreover, $\mathcal{P}$ is called a \textit{skew Bollob\'as system} 
if the above condition $i\neq j$ is replaced by $i<j$. 
For $d=2$, these definitions reduce to the classical  set-pair systems. For non-negative integers $a_1, \ldots, a_d$ and $n\geq a_1+\cdots+a_d$, the multinomial coefficient is denoted by 
$
\binom{n}{a_1,\ldots, a_d} := \frac{n!}{a_1!\cdots a_d!(n-a_1-\cdots-a_d)!}$. 
Extending (\ref{eq-HF}), 
Hegedüs and Frankl \cite{HF24} established the tight bound
$$
\sum_{i=1}^{m}
\frac{1}{\displaystyle\binom{a_i^{(1)}+\cdots+a_i^{(d)}}{a_i^{(1)},\dots ,a_i^{(d)}}
      }
\leq \binom{n+d-1}{d-1}.
$$
The following theorem extends Theorem \ref{HFY} to a skew Bollob\'as  system of $d$-tuples of subsets. 

\begin{theorem}[Yue \cite{Y262}]\label{HFY2} 
Let $\mathcal{P} = \{(A_i^{(1)},\dots,A_i^{(d)})\}_{i=1}^m$ be a skew Bollob\'as  system of $d$-tuples of subsets of $[n]$.  Suppose that $|A_i^{(k)}|=a_i^{(k)}$ for every $ i\in [m]$ and $k\in [d]$.
Then
$$
\sum_{i=1}^{m}
\frac{1}{\displaystyle\binom{a_i^{(1)}+\cdots+a_i^{(d)}}{a_i^{(1)},\dots ,a_i^{(d)}}
      \binom{a_i^{(1)}+\cdots+a_i^{(d)}+d-1}{d-1}}
\leq 1.
$$
\end{theorem}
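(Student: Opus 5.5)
The proof is a direct probabilistic argument in the spirit of Yue's proof of Theorem \ref{HFY}. The idea is to assign to each index $i$ an event $E_i$ whose probability is exactly the $i$-th summand, and to show that the skew intersection condition makes the events $E_1,\dots,E_m$ pairwise disjoint. Then $\sum_i \Pr(E_i)\le 1$.

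\emph{The random experiment.} Give each element $v\in[n]$ an independent random label $x_v$, uniform on $[0,1]$. Independently, draw $d-1$ further uniform variables and let $t_1<t_2<\cdots<t_{d-1}$ be their sorted values. Set $t_0=0$ and $t_d=1$, which splits $[0,1]$ into $d$ open intervals $I_k=(t_{k-1},t_k)$. For each $i\in[m]$ let $E_i$ be the event that $x_v\in I_k$ for every $k\in[d]$ and every $v\in A_i^{(k)}$. In words, the elements of $A_i^{(1)}$ get the smallest labels, then those of $A_i^{(2)}$, and so on, with the thresholds separating consecutive blocks.

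\emph{Computing $\Pr(E_i)$.} Write $s_i=a_i^{(1)}+\cdots+a_i^{(d)}$. By condition (i) the sets $A_i^{(k)}$ are pairwise disjoint, so $E_i$ depends only on the relative order of $s_i+d-1$ independent continuous uniform variables: the $s_i$ labels and the $d-1$ unsorted threshold variables. With probability $1$ these are distinct, and all $(s_i+d-1)!$ orderings are equally likely. The event $E_i$ holds exactly when the ordering consists of the block $A_i^{(1)}$ in any internal order, then one threshold variable, then the block $A_i^{(2)}$, and so on, ending with the block $A_i^{(d)}$. The $d-1$ threshold variables may occupy their $d-1$ slots in any order. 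The number of such orderings is $a_i^{(1)}!\cdots a_i^{(d)}!\,(d-1)!$, so
$$
\Pr(E_i)=\frac{a_i^{(1)}!\cdots a_i^{(d)}!\,(d-1)!}{(s_i+d-1)!}
=\frac{1}{\binom{s_i}{a_i^{(1)},\dots,a_i^{(d)}}\binom{s_i+d-1}{d-1}}.
$$
This is exactly the $i$-th term of the sum in the statement.

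\emph{Disjointness.} Fix $i<j$. By the skew condition (ii) there are $p<q$ and an element $v\in A_i^{(p)}\cap A_j^{(q)}$. On $E_i$ we have $x_v\in I_p$, and on $E_j$ we have $x_v\in I_q$. Since $p\ne q$, the intervals $I_p$ and $I_q$ are disjoint, so $E_i\cap E_j=\emptyset$. The condition is only needed for $i<j$, because disjointness of two events is symmetric; this is why the skew hypothesis suffices. Hence
$$\sum_{i=1}^m\Pr(E_i)=\Pr\Big(\bigcup_{i=1}^m E_i\Big)\le 1,$$
which is the claimed inequality.

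The only step that needs care is the count in the second paragraph. One must use the \emph{unsorted} threshold variables so that the joint ordering is exactly uniform, and then account for the $(d-1)!$ ways those variables can fill the $d-1$ threshold slots; this produces the factor $\binom{s_i+d-1}{d-1}$. Everything else is immediate. For $d=2$ the argument reduces to the proof of Theorem \ref{HFY}.
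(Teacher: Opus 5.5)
Your proof is correct, but it follows a different route from the paper's.

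\textbf{The paper's route.} The paper never proves Theorem~\ref{HFY2} directly. It deduces it from its subspace version, Theorem~\ref{tm2}, by sending each subset to the coordinate subspace it spans. Theorem~\ref{tm2} is proved by an extension procedure:
\begin{enumerate}
\item It first shows $m\le d^n$ (Lemma~\ref{le2}), so only finitely many dimension types can occur.
\item It repeatedly replaces a non-full tuple by the $d$ tuples obtained by adjoining a fresh vector $x$ to one part. This preserves the skew property and leaves the weight unchanged, because $\sum_k (a^{(k)}+1)=s+d$.
\item A bounded, strictly increasing potential forces termination. At that point every tuple satisfies $A_i^{(1)}\oplus\cdots\oplus A_i^{(d)}=V$.
\item Finally it applies the uniform bound of Theorem~\ref{Y262}, which is an exterior-algebra result, to each of the $\binom{n+d-1}{d-1}$ dimension classes.
\end{enumerate}

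\textbf{Your route.} Your argument is Yue's probabilistic method. You use one random ordering with $d-1$ random thresholds, compute $\Pr(E_i)$ exactly, and get pairwise disjointness from the intersection condition.

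\textbf{What each buys.} Your proof is much shorter and fully elementary, and it needs no uniform bound as input. It also proves more than stated. Your disjointness step only uses that some element lies in \emph{different} parts of the two tuples, for any $p\neq q$ and for one of the two orders. This is a symmetric, Tuza-type weak cross-intersection condition, strictly weaker than the skew hypothesis. For $d=2$, conditioning on the threshold $t=p$ gives $\Pr(E_i\mid t=p)=p^{a_i}(1-p)^{b_i}$, so your argument is Tuza's inequality $\sum_i p^{a_i}(1-p)^{b_i}\le 1$ averaged over $p\in[0,1]$. What your method cannot do is leave the set setting. It relies on labelling individual ground elements and on each intersection being witnessed by a shared element, neither of which has an analogue for subspaces. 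That is exactly the gap the paper's extension argument, resting on Theorem~\ref{Y262}, is built to bridge.
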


The notion of  a Bollob\'as system can be naturally extended from sets to vector spaces. Let $V$ be a real vector space of dimension $n$, and let $\mathcal{P} = \{(A_i^{(1)},\dots,A_i^{(d)})\}_{i=1}^m$ be a collection of $d$-tuples of subspaces of  $V$. Then
$\mathcal{P}$  is called  a \textit{Bollob\'as system}  if 
 \begin{itemize}

\item[(i)] $\dim(A_i^{(1)}+\cdots +A_i^{(d)})=\dim(A_i^{(1)})+\cdots +\dim(A_i^{(d)}) \text{ for any } i \in [m],$ and

\item[(ii)] for any $1\leq i\neq j \leq m$, there exist $1\leq p< q \leq d$ such that
$\dim (A_i^{(p)}\cap A_j^{(q)})>0.$
\end{itemize}

If the condition (ii) is required only for ordered pairs $i<j$, then $\mathcal{P}$ is called a \textit{skew Bollob\'as system}. We remark that the above condition (i) is slightly different from that of the previous $d$-tuples of subsets. 
This notation was originally introduced by Oum and Wee \cite{OW2018}. 
As the fourth result of this paper, we show the following vector space extension of Theorem \ref{HFY2}.

\begin{theorem}\label{tm2}
Let $\mathcal{P} = \{(A_i^{(1)},\dots,A_i^{(d)})\}_{i=1}^m$ be a skew Bollob\'as  system of $d$-tuples of subspaces of  $V\cong \mathbb{R}^n$.  Suppose that $\dim(A_i^{(k)})=a_i^{(k)}$ for every $ i\in [m]$ and $k\in [d]$.
Then
$$
\sum_{i=1}^{m}
\frac{1}{\displaystyle\binom{a_i^{(1)}+\cdots+a_i^{(d)}}{a_i^{(1)},\dots ,a_i^{(d)}}
      \binom{a_i^{(1)}+\cdots+a_i^{(d)}+d-1}{d-1}}
\leq 1.
$$
\end{theorem}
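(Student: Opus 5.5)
Write $s_i=a_i^{(1)}+\cdots+a_i^{(d)}$ and let $w_i$ denote the reciprocal of the $i$-th term, i.e.
$$w_i=\binom{s_i}{a_i^{(1)},\dots,a_i^{(d)}}\binom{s_i+d-1}{d-1}.$$
The plan is to follow the same route as Theorem~\ref{tm1}, now for $d$-tuples. We reduce the weighted statement to a uniform Lovász-type bound by padding each tuple to a common total dimension. The key combinatorial identity is that $\binom{s+d-1}{d-1}\binom{s}{a^{(1)},\dots,a^{(d)}}$ counts the ways to extend a $d$-tuple of sizes $(a^{(1)},\dots,a^{(d)})$ to a $(d)$-tuple of sizes summing to a larger total. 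More precisely, we will use the identity
$$\sum_{c^{(1)}+\cdots+c^{(d)}=N}\ \prod_k \binom{c^{(k)}}{a^{(k)}}=\binom{N+d-1}{s+d-1}.$$
This is the multi-part analogue of the identity behind Theorem~\ref{tm1}, which is the case $d=2$, where the weight $(a+b+1)\binom{a+b}{a}$ appears.

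\textbf{Step 1: the uniform theorem.} First I establish a uniform skew bound for $d$-tuples of subspaces. If $\dim A_i^{(k)}=c^{(k)}$ for all $i$, then $m\le \binom{c^{(1)}+\cdots+c^{(d)}}{c^{(1)},\dots,c^{(d)}}$. This is the Oum--Wee extension of Lovász's exterior algebra argument. I will prove it by taking, in a generic extension of $V$ (or after a generic projection to dimension $N=\sum_k c^{(k)}$), the wedge products $\alpha_i^{(k)}$ of bases of the $A_i^{(k)}$. I then consider the tensors $\alpha_i^{(1)}\otimes\cdots\otimes\alpha_i^{(d)}$ in a suitable product of exterior powers. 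Pairing them against multilinear "dual" functionals built from the $A_j$ gives a triangular matrix: the condition $i<j$ kills the pairing, and the direct-sum condition (i) makes the diagonal nonzero. The dimension count then yields the multinomial coefficient.

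\textbf{Step 2: padding.} Next I fix a large $N$ and embed $V$ into $V\oplus\mathbb{R}^{M}$ with $M$ large. For each $i$ and each composition $(c^{(1)},\dots,c^{(d)})$ of $N$ with $c^{(k)}\ge a_i^{(k)}$, I form the padded tuple
$$\bigl(A_i^{(k)}\oplus U_{i,c}^{(k)}\bigr)_{k}.$$
Here the $U_{i,c}^{(k)}$ are chosen as subspaces of the new coordinates, imitating the set construction of Hegedüs--Frankl and Yue, in which one adds fresh elements and orders them. Rather than adding independent generic subspaces, which would destroy the cross intersections, I copy the set construction. I take a fixed $N$-element set of new basis vectors and, for each $i$, range over all ordered distributions in which the padding for part $k$ is taken from a common linearly ordered pool. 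Concretely, I take all partitions of the pool $[N+d-1]$ by $d-1$ separators, mirroring the stars-and-bars count.

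\textbf{Step 3: verify and count.} The padded family must remain skew Bollobás under a suitable lexicographic order: first by $i$, then by the padding pattern. Between different paddings with the same $i$, the pattern structure itself supplies the skew intersection, as in the set case, where the family of all ordered partitions of a pool is a skew system. Condition (i) is preserved because the pool lies in a complement of $V$. Applying Step 1 and then dividing by the total count gives
$$\sum_i \frac{\#\{\text{paddings of } i\}}{\binom{N}{\cdots}}\le 1.$$
By the identity above, this ratio tends to $1/w_i$ as $N\to\infty$.

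\textbf{Main obstacle.} I expect the main obstacle to be Step 2: designing the subspace padding so that (a) every pair with $i<j$ keeps a forced intersection, and (b) distinct paddings of the same $i$ are correctly skew-ordered. The latter is needed without losing condition (i). If direct padding fails, I would instead follow the maximality argument used for Theorem~\ref{tm1}. That means choosing a counterexample maximizing $\sum_i s_i$ and showing that each tuple can be enlarged inside $V$ until all $s_i$ are equal. The uniform bound combined with averaging over how to split the enlargement among the $d$ parts would then give the factor $\binom{s_i+d-1}{d-1}$.
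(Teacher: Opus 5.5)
\textbf{There is a genuine gap in Step 2.} You leave it open yourself, and the concrete padding you do specify is the wrong family. Write $\binom{N}{c}$, $\binom{s_i}{a_i}$, $\binom{N-s_i}{c-a_i}$ for the multinomials $\binom{N}{c^{(1)},\dots,c^{(d)}}$ etc., with differences taken coordinatewise.

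Cutting an ordered pool with $d-1$ separators yields exactly one padding $U_{i,c}$ per composition $c\ge a_i$ of $N$. You then apply the uniform bound to each composition class and divide by the number $\binom{N+d-1}{d-1}$ of classes. Tuple $i$ therefore contributes $\binom{N+d-1}{d-1}^{-1}\sum_{c\ge a_i}\binom{N}{c}^{-1}$.

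This is strictly smaller than $1/w_i$ once $N\ge s_i+2$, and it tends to $0$ as $N\to\infty$. For instance, with $d=2$, $s_i=0$, $N=2$ it equals $\tfrac13(1+\tfrac12+1)=\tfrac56<1=1/w_i$. So the resulting inequality does not imply the theorem, and the claimed limit is false.

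Your identity $\sum_c\prod_k\binom{c^{(k)}}{a^{(k)}}=\binom{N+d-1}{s+d-1}$ belongs to a different family: \emph{all} assignments of the padding vectors to the $d$ parts. You also never give the order that makes distinct paddings of the same $i$ skew, and that is the one nontrivial verification.

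\textbf{What works.} Put $\delta=N-s_i$ and let $x_1,\dots,x_\delta$ span a complement of $\bigoplus_k A_i^{(k)}$. You can do this inside $V$ itself with $N=n$, so neither new coordinates nor a limit are needed. For each $f:[\delta]\to[d]$, take the tuple whose $k$-th part is $A_i^{(k)}\oplus\operatorname{span}\{x_t: f(t)=k\}$. Order all tuples first by $i$, then lexicographically in $f$.

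If $f$ precedes $g$ and $t$ is the first place they differ, then $x_t$ lies in part $f(t)$ of the first tuple and in part $g(t)>f(t)$ of the second. Condition (i) and the conditions between different $i<j$ are inherited. Tuple $i$ has $\binom{N-s_i}{c-a_i}$ paddings of composition $c$, and
\[
\sum_{c}\frac{\binom{N-s_i}{c-a_i}}{\binom{N}{c}}=\frac{\binom{N+d-1}{s_i+d-1}}{\binom{N}{s_i}\binom{s_i}{a_i}}=\frac{\binom{N+d-1}{d-1}}{w_i}
\]
holds exactly, which gives the theorem.

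This is essentially the family the paper builds one vector at a time. The paper replaces a non-full tuple by $T_1,\dots,T_d$, with $x$ added to part $k$ in $T_k$. It checks that the weights of the $d$ children sum to the parent's weight, and iterates until every tuple is full. It then applies the cited uniform theorem to each of the $\binom{n+d-1}{d-1}$ classes.

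Your fallback misses exactly this point. A tuple must be replaced by all $d$ children, not merely enlarged, and maximizing $\sum_i s_i$ does not substitute for the weight identity.

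Step 1 may simply be cited, as the paper does. Note, though, that a single projection to dimension $N$ gives a space of dimension $\prod_k\binom{N}{c^{(k)}}$, which is too large. Instead, project the $k$-th factor to dimension $c^{(k)}+\cdots+c^{(d)}$ and pair it with $A_j^{(k+1)}\oplus\cdots\oplus A_j^{(d)}$.
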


Theorem \ref{HFY2} follows from Theorem \ref{tm2} via the  standard construction.

\paragraph{Our approaches.} 
For Theorems \ref{Main3} and \ref{Main4}, 
we employ the exterior algebra approach of Lov\'asz \cite{L77}, Scott and Wilmer \cite{SW2021}. The multipart structure in Theorem \ref{Main3}  demands a refined analysis, as  projection dimensions in different components must be controlled simultaneously. We overcome this by a component‑wise induction that carefully interleaves projections and wedge products,  tracking dimensions separately in each block and combining them via a product formula.
Theorem \ref{Main4} is proved by associating each projective subspace with a corresponding non-trivial vector subspace and then applying a linear independence argument in the exterior algebra over an arbitrary field.  

The proofs of Theorems \ref{tm1} and \ref{tm2}  develop  the approach introduced by Hegedüs and Frankl \cite{HF24},  modifing it in two essential aspects. Firstly, we transfer the problem to the vector space setting, overcoming the difficulty that the collection of all subspaces is infinite by working with a suitably constructed finite family. 
 Secondly, we work with a more general weighted sum and design an  extension operation that  does not increase this sum. We introduce a carefully chosen bounded potential function that strictly increases with each operation. Since the potential is bounded, the process must terminate after finitely many steps. The final configuration then satisfies a known bound, yielding the required inequality.
Our method is distinct from the probabilistic  technique used by Yue \cite{Y261, Y262} for set systems, and  applies directly to vector space configuration.

The remainder of the paper is organized as follows. Section \ref{Sea} collects the necessary preliminary results from exterior algebra.   Section \ref{se3}  presents the proofs of Theorems \ref{Main3} and \ref{Main4}.  Section \ref{se2} contains the proofs of Theorems \ref{tm1} and \ref{tm2}, together with Corollary \ref{cr1}.

\section{Preliminary results}\label{Sea}
In this section, we review some basic definitions and properties of exterior algebras that will be used throughout the proof; see, e.g., \cite{SW2021}. Let $V$ be an $n$-dimensional real vector space and let $$\bigwedge V=\bigoplus_{r=0}^n\bigwedge^r V$$
denote the standard grading of its   exterior algebra.
For a fixed ordered basis $F = (f_{1},\dots,f_{n})$ of  $V$ and a subset $A\in \binom{[n]}{r}$, we denote
$f_{A} := \bigwedge_{a\in A} f_a\in \bigwedge^r V$, 
 where the elements of $A$ are listed in increasing order. 
The collection $\{f_{A} : A \in \binom{[n]}{r}\} $  forms a basis of $\bigwedge^{r} V$.  In particular,
$
\dim (\bigwedge^{r} V) = \binom{n}{r}
$
and $\dim (\bigwedge V) =2^n$.  
Moreover, a crucial property states that for $A, B\subseteq [n]$, 
\begin{align} 
f_A\wedge f_B= 
\begin{cases}
0 & \text{ if } A\cap B \neq \emptyset, \\
(-1)^{\rho(A, B)}f_{A\cup B}   & \text{ otherwise},
\end{cases}\label{new1}
\end{align}
where $\rho(A, B)=|\{(a,b): a\in A, b\in B, a>b\}|$. 
For a subspace $A\subseteq V$ with basis $\{a_1,\ldots ,a_r\}$, we define $v_A =a_1\wedge \cdots \wedge a_r \in \bigwedge^r V$. We remark that $v_A$ is only determined up to a non-zero constant when we choose a different basis of $A$. This will not cause any ambiguity in our arguments. For two subspaces $B, C\subseteq V$, equation (\ref{new1}) implies that
$v_B\wedge v_C=0$ if and only if $\dim(B\cap C)>0$.

For two subspaces $U,W\subseteq \bigwedge V$, we define $U\wedge W=\mathrm{span}\{u\wedge w: u\in U, w\in W\}$. 
By establishing a correspondence between hypergraphs and subspaces, 
 Scott and Wilmer \cite{SW2021} proved the following analogue of the local LYM inequality in the exterior algebra.

\begin{lemma}\cite{SW2021}\label{SW1}
 Let $V\cong \mathbb{R}^{n}$ and let $W$ be a subspace of $\bigwedge^{r}V$. Then for $c\in [0, n - r]$,
$$
\frac{\dim(W\wedge\bigwedge^{c}V)}{\binom{n}{r+c}}\geq\frac{\dim (W)}{\binom{n}{r}}.
$$
\end{lemma}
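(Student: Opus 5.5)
The plan is to reduce the exterior-algebra statement to the classical local LYM inequality for hypergraphs, using a generic change of basis together with a compression (initial-term) argument. Fix a basis $f_1,\dots,f_n$ of $V$ and a total order on $\binom{[n]}{r}$, say colexicographic. For every nonzero $w\in\bigwedge^r V$, written as $w=\sum_A \lambda_A f_A$, define its leading set $\mathrm{lt}(w)$ to be the smallest $A$ (in this order) with $\lambda_A\neq 0$. By Gaussian elimination on the coordinate vectors, $W$ has a basis whose leading sets are pairwise distinct. This gives a family $\mathcal{H}(W)\subseteq\binom{[n]}{r}$ with $|\mathcal{H}(W)|=\dim W$. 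Apply the same construction to $W\wedge\bigwedge^c V$ to get $\mathcal{H}(W\wedge\bigwedge^c V)\subseteq\binom{[n]}{r+c}$, again with $|\mathcal{H}(W\wedge\bigwedge^c V)|=\dim(W\wedge\bigwedge^c V)$.

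The goal is then the containment
$$\partial^{+c}\mathcal{H}(W) \subseteq \mathcal{H}(W\wedge\textstyle\bigwedge^c V),$$
where $\partial^{+c}\mathcal{H}$ denotes the upper $c$-shadow, i.e.\ all $(r+c)$-sets containing some member of $\mathcal{H}$. Once this holds, the classical local LYM inequality $|\partial^{+c}\mathcal{H}|/\binom{n}{r+c}\ge |\mathcal{H}|/\binom{n}{r}$ finishes the proof. That inequality follows from double counting pairs $(A,C)$ with $A\in\mathcal{H}$, $C\supseteq A$, $|C|=r+c$: each $A$ lies in $\binom{n-r}{c}$ such $C$, and each $C$ contains at most $\binom{r+c}{r}$ such $A$.

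The main obstacle is this containment. Given $w\in W$ with leading set $A$ and a set $C\supseteq A$, I would take $w\wedge f_{C\setminus A}$. Its expansion contains $\pm\lambda_A f_C$, but it also contains terms $\lambda_{A'} f_{A'\cup(C\setminus A)}$ for other $A'$. Such an $A'$ may be larger than $A$ and yet have $A'\cup(C\setminus A)$ smaller than $C$. So the leading set of the product need not be $C$, and a fixed order does not obviously give monotonicity under adding elements.

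I would handle this as follows. First apply a generic invertible linear change of the basis $f$, or equivalently replace $W$ by $gW$ for a generic $g\in GL(V)$; dimensions are unchanged. Then use a weight-vector (Gröbner-type) initial degeneration. Choose generic weights $\omega_1,\dots,\omega_n$, put $\omega(A)=\sum_{a\in A}\omega_a$, and let $\mathrm{in}(w)$ be the part of $w$ with minimal $\omega$-weight. The initial space $\mathrm{in}(W)$ is then spanned by monomials (after a generic coordinate change, a Borel-fixed-type argument makes the initial space monomial). Two facts remain. The weight is additive, so $\mathrm{in}(w\wedge f_S)=\mathrm{in}(w)\wedge f_S$ whenever the right side is nonzero. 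And $\mathrm{in}(W)\wedge\bigwedge^c V\subseteq \mathrm{in}(W\wedge\bigwedge^c V)$, with dimensions preserved under taking initial spaces. Together these reduce the problem to monomial subspaces, i.e.\ hypergraphs, where the containment is immediate.

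If making the initial space monomial turns out to be delicate, the fallback is to keep the general weight filtration and prove the dimension inequality gradewise on the associated graded spaces, which are spanned by the $f_A$ of a fixed weight.
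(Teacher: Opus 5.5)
Your argument is correct. It is essentially the hypergraph--subspace correspondence that the paper attributes to Scott--Wilmer; the paper itself only cites the lemma and gives no proof. The structure is: leading sets give $\mathcal{H}(W)$ with $|\mathcal{H}(W)|=\dim W$, the upper $c$-shadow of $\mathcal{H}(W)$ lies in $\mathcal{H}(W\wedge\bigwedge^c V)$, and the classical local LYM inequality finishes.

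However, the ``main obstacle'' you identify is not real. Colex is exactly the weight order with $\omega_i=2^i$, so it is already additive. The only surviving terms of $w\wedge f_S$, with $S=C\setminus A$, come from sets $A'$ disjoint from $S$. For these, $(A'\cup S)\triangle(A\cup S)=A'\triangle A$. Hence $A<A'$ forces $A\cup S<A'\cup S$, distinct $A'$ give distinct $A'\cup S$, and $\mathrm{lt}(w\wedge f_S)=C$ immediately.

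So you can drop the generic change of basis, the Borel-fixed remark, and the graded fallback. The Borel-fixed remark is also misplaced: it concerns shiftedness of generic initial spaces, not monomiality. Any weights with distinct subset sums already make $\mathrm{in}(W)$ monomial.
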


Denote by  
$
\mathcal{B}(V)= \bigl\{(f_1,\dots,f_n) : \{f_1,\dots,f_n\} \text{ is a basis of } V \bigr\}
$ 
the set of  ordered bases of $V$.
When $V=\mathbb{R}^n$, we naturally identify  $\mathcal{B}(V)$ with $\mathrm{GL}_n(\mathbb{R})$.
Given $F=(f_1,\dots,f_n)\in \mathcal{B}(V)$
and $J \subseteq [n]$, we define $V_J = \operatorname{span}\{ f_j : j \in J \}$ and  the \textit{linear projection} $\pi_J^F : V \to V_J$ by
$$
\pi_J^F\left( \sum_{j=1}^n \alpha_j f_j \right) = \sum_{j \in J} \alpha_j f_j.
$$ 
It is easy to see that $\dim \pi_{J}^F (C)\le \min \{\dim C,|J|\}$ for any subspace $C$ of $V$. 
Motivated by the rank argument of F\"{u}redi \cite{F84} (see \cite[page 171]{FT18}), Scott and Wilmer \cite{SW2021} proved the following result. 

\begin{lemma}\cite{SW2021}\label{SW2}
Let $V = \mathbb{R}^{n}$ and let $C_1,\ldots ,C_m$ be proper subspaces of $V$. Then there exists a nonzero polynomial $G$ in the $n^2$ variables $f_{ij}(1\leq i,j\leq n)$ such that $G(F)\neq 0$ implies $F =$ $(f_{ij})\in \mathrm{GL}_n(\mathbb{R})$ and
$
\dim \pi_{J}^{F}(C_{i}) = \min \{\dim C_{i},|J|\}
$ 
for all $i\in [m]$ and $J\subseteq [n]$.
\end{lemma}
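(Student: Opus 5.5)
The plan is to turn each rank condition into the non-vanishing of one polynomial in the entries of $F$, and then multiply these polynomials together with $\det F$. Fix, for each $i\in[m]$, a basis of $C_i$ and let $M_i$ be the $n\times d_i$ real matrix whose columns are these basis vectors, where $d_i=\dim C_i$. For $F=(f_{ij})$, viewed as the matrix whose columns are the $f_j$, the coordinates of a vector $v$ with respect to $F$ are $F^{-1}v$. Hence $\pi_J^F(C_i)$ is isomorphic to the column space of the submatrix of $F^{-1}M_i$ formed by the rows indexed by $J$, and
$$\dim \pi_J^F(C_i)=\operatorname{rank}\bigl((F^{-1}M_i)_J\bigr).$$
To avoid the inverse, I will use the adjugate: $\operatorname{adj}(F)=\det(F)\,F^{-1}$, and every entry of $\operatorname{adj}(F)$ is a polynomial in the $f_{ij}$. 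So whenever $\det F\neq 0$, we have $\dim \pi_J^F(C_i)=\operatorname{rank}\bigl((\operatorname{adj}(F)M_i)_J\bigr)$, and every minor of the latter matrix is a polynomial in the $n^2$ variables.

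Next, for each pair $(i,J)$ with $s=\min\{d_i,|J|\}$, I will exhibit one invertible matrix $F_0$ at which the rank is exactly $s$. Choose an invertible $P$ that sends the chosen basis of $C_i$ to the standard vectors $e_{j_1},\dots,e_{j_{d_i}}$, where the indices are chosen so that $\min\{d_i,|J|\}$ of them lie in $J$. This is possible since $d_i\le n$: take as many $j_t$ from $J$ as possible and fill the rest from $[n]\setminus J$. Then $(PM_i)_J$ has rank $s$. Setting $F_0=P^{-1}$, some $s\times s$ minor of $(\operatorname{adj}(F_0)M_i)_J=\det(F_0)(PM_i)_J$ is nonzero. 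Fix that choice of rows and columns and call the corresponding minor polynomial $G_{i,J}(F)$. It is not the zero polynomial because $G_{i,J}(F_0)\neq0$. When $s=0$ (that is, $J=\emptyset$ or $d_i=0$) no condition is needed, and I take $G_{i,J}=1$.

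Finally, define
$$G(F)=\det(F)\prod_{i\in[m]}\prod_{J\subseteq[n]}G_{i,J}(F).$$
This is a product of finitely many nonzero polynomials over $\mathbb{R}$. Since the polynomial ring is an integral domain, $G$ is nonzero. If $G(F)\neq 0$, then $\det F\neq 0$, so $F\in\mathrm{GL}_n(\mathbb{R})$. Moreover, each $G_{i,J}(F)\neq0$ gives $\operatorname{rank}\bigl((F^{-1}M_i)_J\bigr)\ge \min\{d_i,|J|\}$. Combined with the trivial upper bound $\dim\pi_J^F(C_i)\le\min\{\dim C_i,|J|\}$, this yields equality.

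The only step that needs real care is the second one: showing that each chosen minor is not identically zero. It hinges on constructing the explicit witness basis $F_0$ and on the identity $\operatorname{adj}(F)=\det(F)F^{-1}$, which keeps everything polynomial. The remaining steps are bookkeeping. The hypothesis that the $C_i$ are proper subspaces is not actually needed for this argument.
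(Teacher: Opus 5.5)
The paper does not prove this lemma; it quotes it from \cite{SW2021}, remarking only that it is motivated by F\"uredi's rank argument, so there is no in-text proof to compare against. Your argument is a correct and complete proof. Each step checks out:
\begin{itemize}
\item The identification $\dim\pi_J^F(C_i)=\operatorname{rank}\bigl((F^{-1}M_i)_J\bigr)$ holds because the $f_j$ with $j\in J$ are independent.
\item The adjugate makes the chosen minors polynomial in the $f_{kl}$.
\item The witness $F_0=P^{-1}$ shows that no $G_{i,J}$ vanishes identically. It is a basis containing the basis of $C_i$ in positions of which exactly $\min\{d_i,|J|\}$ lie in $J$.
\item The integral-domain argument finishes the proof.
\end{itemize}

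A variant closer to F\"uredi's general-position viewpoint avoids $F^{-1}$ altogether. Since $\pi_J^F$ has kernel $V_{[n]\setminus J}=\operatorname{span}\{f_j: j\notin J\}$, for invertible $F$ one has $\dim\pi_J^F(C_i)=\dim\bigl(C_i+V_{[n]\setminus J}\bigr)-(n-|J|)$. The required equality then says that the matrix obtained by adjoining the columns $f_j$, $j\notin J$, to $M_i$ has full rank $\min\{n,\,d_i+n-|J|\}$. The minors of that matrix are polynomial in the $f_{kl}$ directly, with no adjugate, and your witness $F_0$ shows the chosen minor is not identically zero. The two versions are equivalent: yours describes what $\pi_J^F$ does more explicitly, while the kernel version is slightly more economical.

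Your remark that properness is superfluous is correct, since for $C_i=V$ the identity holds for every basis. It is also worth having. In the proof of Theorem~\ref{Main3} the lemma is applied to a family containing $\phi_k(A_{i,k}+B_{i,k})$, which can be all of $\mathbb{R}^{n_k}$, so the stated hypothesis of properness is not literally met there.
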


We extend the  projection $\pi_J^F$ linearly from $V$ to $\bigwedge^r V$ as follows: for any $r \geq 0$ and $A \in \binom{[n]}{r}$, we define 
$
\pi_J^F(f_A) =\bigwedge_{a \in A} \pi_J^F(f_a),
$
and extend linearly to all of $\bigwedge V$. 
Explicitly, it follows that 
$$
\pi_J^F(f_A) = 
\begin{cases}
f_A & \text{ if } A \subseteq J, \\
0   & \text{ otherwise}.
\end{cases}
$$

 Scott and Wilmer \cite{SW2021} proved the following properties. 
 
\begin{lemma}\cite{SW2021}\label{SW3}
 Let $V = \mathbb{R}^{n}$ and let $W \subseteq \bigwedge^{r} V$  be a linear subspace. Then there exists a nonzero polynomial $H$ in the $n^2$ variables $f_{ij}, 1 \leq i, j \leq n$, such that $H(F) \ne 0$ implies that $F = (f_{ij}) \in \mathrm{GL}_n(\mathbb{R})$, and for every $ m \in [n-1]$ and  all $J \in \binom{[n]}{m}$,
$$
\dim \bigl(\pi_{J}^{F}(W) \bigr)=\max_{J^*\in \binom{[n]}{m},F^*\in \mathrm{GL}_n(\mathbb{R})} \dim \bigl(\pi_{J^*}^{F^*}(W)\bigr).
$$
\end{lemma}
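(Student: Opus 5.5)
The plan is to express every quantity $\dim \pi_J^F(W)$ as the rank of a matrix whose entries are rational functions in the $n^2$ entries of $F$, with denominators that are powers of $\det F$. A dimension being maximal is then equivalent to the nonvanishing of a suitable minor. Taking the product of one such minor for every $J$, times $\det F$, gives the required polynomial $H$.

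\textbf{Setup.} Fix a basis $w_1,\dots,w_s$ of $W\subseteq\bigwedge^r V$, written in the standard basis $\{e_K: K\in\binom{[n]}{r}\}$ of $\bigwedge^r V$. For $F=(f_{ij})\in \mathrm{GL}_n(\mathbb{R})$, identified with an ordered basis as in the paper, the change of basis from $\{e_K\}$ to $\{f_K\}$ on $\bigwedge^r V$ is given by the $r$-th compound matrix of $F$. Its inverse is the $r$-th compound of $F^{-1}$. By Cramer's rule, its entries are polynomials in the $f_{ij}$ divided by $(\det F)^r$.

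Hence the $f_K$-coordinates of each $w_t$ have the form $p_{t,K}(F)/(\det F)^r$ with $p_{t,K}$ polynomial. By the explicit formula for $\pi_J^F$ on $\bigwedge^r V$, the projection $\pi_J^F(w_t)$ keeps exactly the coordinates with $K\subseteq J$. So $\dim \pi_J^F(W)$ equals the rank of the $s\times\binom{|J|}{r}$ matrix $M_J(F)=(p_{t,K}(F))_{t,\,K\subseteq J}$. The common nonzero factor $(\det F)^{-r}$ does not affect the rank.

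\textbf{Per-$J$ polynomials.} Fix $m\in[n-1]$ and set $d_m=\max_{J^*,F^*}\dim \pi_{J^*}^{F^*}(W)$. Take any $J\in\binom{[n]}{m}$ and let $(J^*,F^*)$ attain $d_m$. Choose a permutation $\sigma$ of $[n]$ with $\sigma(J)=J^*$, and reorder the basis $F^*$ so that its $j$-th vector is $f^*_{\sigma(j)}$. This gives $F'$ with $\pi_J^{F'}=\pi_{J^*}^{F^*}$, so the value $d_m$ is attained at $J$ itself.

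Therefore some $d_m\times d_m$ minor of $M_J$ is a polynomial $P_J$ with $P_J(F')\neq0$. In particular $P_J$ is a nonzero polynomial. Whenever $\det F\neq 0$ and $P_J(F)\ne 0$, we get $\dim\pi_J^F(W)\ge d_m$. The reverse inequality is the definition of $d_m$, so $\dim\pi_J^F(W)=d_m$.

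\textbf{Conclusion and main obstacle.} Put
$$H=\det(F)\cdot\prod_{m=1}^{n-1}\prod_{J\in\binom{[n]}{m}}P_J .$$
This is nonzero because $\mathbb{R}[f_{ij}]$ is an integral domain. If $H(F)\ne0$, then $F$ is invertible and every $P_J(F)\neq 0$, so all projection dimensions are maximal, as claimed.

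The only delicate point is that $\pi_J^F$ depends on $F$ through $F^{-1}$, not polynomially. The compound-matrix and Cramer's-rule step handles this by clearing the powers of $\det F$. The relabeling argument is what lets a single maximum $d_m$, taken over all $J^*$ of size $m$, be attained at every $J$ of that size.
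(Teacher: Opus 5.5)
Your argument is correct and complete. The paper does not prove this lemma itself: it quotes it from \cite{SW2021} and uses it as a black box, so there is no in-paper proof to compare against.

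Your route is the standard generic-rank argument. You write $\dim \pi_J^F(W)$ as the rank of a polynomial matrix $M_J(F)$, after clearing the common factor $(\det F)^{-r}$ that comes from the $r$-th compound of $F^{-1}$. You then choose a minor that is nonzero at one optimal configuration, and multiply these minors together with $\det F$. That is exactly the kind of argument the Zariski-open formulation of the statement calls for.

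Two points you use implicitly are worth stating explicitly.
\begin{enumerate}
\item The paper extends $\pi_J^F$ to $\bigwedge^r V$ by sending each $f_A$ to $\bigwedge_{a\in A}\pi_J^F(f_a)$. This coincides with the functorial map $\bigwedge^r(\pi_J^F)$, so it depends only on the linear map $\pi_J^F$ on $V$. That is what justifies $\pi_J^{F'}=\pi_{J^*}^{F^*}$ on $\bigwedge^r V$ after you relabel the basis.
\item When $d_m=0$ (for instance if $r>m$ or $W=\{0\}$), you simply take $P_J=1$, the empty minor.
\end{enumerate}

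Your relabeling step is also precisely the feature the paper relies on later, in Cases 2 and 3 of Claim \ref{cla2}. There, $\dim\pi_J^{F_k}(W_i^{(k)})$ is taken to be the same for every $J$ of a fixed size.
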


\begin{lemma}\cite{SW2021}\label{ad3}
Suppose that $0< r\le n-d$. Let $V \cong \mathbb{R}^{n}$ and let  $W\subseteq \bigwedge^r V$ be a linear subspace and $F\in \mathcal{B}(V)$. Then 
$$
 \max_{J\in \binom{[n]}{n-d}}\frac{\dim \bigl(\pi_{J}^{F}(W)\bigr)}{ \binom{n-d}{r}}\geq \frac{\dim (W)}{ \binom{n}{r}}.
$$
\end{lemma}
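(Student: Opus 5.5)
The plan is a direct averaging argument over all $J\in\binom{[n]}{n-d}$, combined with a reduced-echelon basis of $W$ with respect to the basis $\{f_A\}$. The maximum is at least the average, so it suffices to show
$$
\sum_{J\in\binom{[n]}{n-d}} \dim\bigl(\pi_J^F(W)\bigr)\;\ge\; \dim(W)\binom{n-r}{n-d-r},
$$
and then compare binomial coefficients.

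\textbf{Step 1 (echelon basis).} Fix the basis $\{f_A : A\in\binom{[n]}{r}\}$ of $\bigwedge^r V$ coming from $F$, and write vectors of $W$ in these coordinates. Let $s=\dim(W)$. Gaussian elimination on a matrix whose rows span $W$ gives a basis $w_1,\dots,w_s$ of $W$ and distinct pivot sets $A_1,\dots,A_s\in\binom{[n]}{r}$ with two properties: the coefficient of $f_{A_t}$ in $w_t$ is $1$, and the coefficient of $f_{A_{t'}}$ in $w_t$ is $0$ for $t'\neq t$.

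\textbf{Step 2 (local lower bound).} Fix $J$ and let $T_J=\{t : A_t\subseteq J\}$. By the explicit formula for $\pi_J^F$ on basis vectors, $\pi_J^F$ keeps exactly those coordinates $f_A$ with $A\subseteq J$ and kills the others. Hence for $t\in T_J$, the vector $\pi_J^F(w_t)$ still has coefficient $1$ at $f_{A_t}$ and $0$ at $f_{A_{t'}}$ for every other $t'\in T_J$. Restricted to the pivot coordinates $\{A_{t'}: t'\in T_J\}$, these vectors form an identity matrix, so they are linearly independent. Therefore $\dim \pi_J^F(W)\ge |T_J|$.

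\textbf{Step 3 (double counting).} Summing over $J$ and exchanging the order of summation gives
$$
\sum_J \dim\pi_J^F(W)\ \ge\ \sum_{t=1}^s\#\{J\in\tbinom{[n]}{n-d}: A_t\subseteq J\} \;=\; s\binom{n-r}{n-d-r}.
$$
Here $0<r\le n-d$ guarantees that $A_t\subseteq J$ is possible. Divide by $\binom{n}{n-d}$ and use the identity $\binom{n}{n-d}\binom{n-d}{r}=\binom{n}{r}\binom{n-r}{n-d-r}$; both sides count pairs $A\subseteq J$ with $|A|=r$ and $|J|=n-d$. The average of $\dim\pi_J^F(W)/\binom{n-d}{r}$ over $J$ is then at least $s/\binom{n}{r}$, and the maximum is at least the average.

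There is no real obstacle. The only point needing care is Step 2: one must choose the pivots to be full reduced-echelon pivots, so that projecting away coordinates cannot create dependencies among the surviving $\pi_J^F(w_t)$. A mere "leading term" basis would not suffice, since a leading term need not survive in the way required.
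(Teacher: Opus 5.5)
Your proof is correct. The paper gives no proof of this lemma; it simply quotes it from Scott--Wilmer, so there is no argument in the text to compare against, and yours works as a self-contained replacement. The pivot family $\{A_1,\dots,A_s\}$ is a set system of size $\dim(W)$ whose members contained in $J$ bound $\dim\pi_J^F(W)$ from below. This reduces the claim to the elementary count of $(n-d)$-sets containing a fixed $r$-set.

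You in fact prove more than the quoted statement. For every basis $F$, the \emph{average} of $\dim\pi_J^F(W)/\binom{n-d}{r}$ over $J\in\binom{[n]}{n-d}$ is at least $\dim(W)/\binom{n}{r}$. This averaged form pairs naturally with Lemma \ref{ad3-2}: chaining the two gives $\dim\bigl(W\wedge\bigwedge^{d}V\bigr)/\binom{n}{r+d}\ge\dim(W)/\binom{n}{r}$. That is exactly Lemma \ref{SW1} with $c=d$.

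One correction to your closing remark: a plain (non-reduced) echelon basis would also work. Suppose each $w_t$ vanishes on all coordinates $f_A$ that precede its pivot $f_{A_t}$ in a fixed order. Then for $t\in T_J$, the vector $\pi_J^F(w_t)$ keeps its nonzero coefficient at $f_{A_t}$ and still vanishes before it. So the projected vectors have distinct leading terms and are linearly independent. Equivalently, a principal submatrix of a triangular matrix with nonzero diagonal is nonsingular. Full reduction does no harm, but it is not needed.
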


\begin{lemma}
    \cite{SW2021} \label{ad3-2}
    Suppose that $0<r \le n-d$. Let $V \cong \mathbb{R}^{n}$ and let  $W\subseteq \bigwedge^r V$ be a linear subspace and $F\in \mathcal{B}(V)$. Then 
    \[  \frac{\dim(W\wedge\bigwedge^{d}V)}{ \binom{n}{r+d}}\geq
 \frac{1}{\binom{n}{n-d}}\sum_{J\in \binom{[n]}{n-d}}\frac{\dim \bigl(\pi_{J}^{F}(W)\bigr)}{ \binom{n-d}{r}}. \]
\end{lemma}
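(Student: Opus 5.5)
The plan is to bound each projection $\pi_J^F(W)$ by a piece of $U:=W\wedge\bigwedge^d V$, and then double count those pieces using leading terms in the basis $\{f_S\}$. Throughout, for $J\in\binom{[n]}{n-d}$ write $K=[n]\setminus J$, so $|K|=d$. Let $E_K:=\mathrm{span}\{f_S : S\in\binom{[n]}{r+d},\ S\supseteq K\}\subseteq \bigwedge^{r+d}V$.

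\textbf{Step 1: local identity.} I claim $\dim(W\wedge f_K)=\dim \pi_J^F(W)$ and $W\wedge f_K\subseteq U\cap E_K$.

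Expand $w\in W$ as $w=\sum_{A}\alpha_A f_A$. By (\ref{new1}), $f_A\wedge f_K=0$ unless $A\cap K=\emptyset$, i.e.\ unless $A\subseteq J$. Hence $w\wedge f_K=\pi_J^F(w)\wedge f_K$.

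Next, the map $x\mapsto x\wedge f_K$ is injective on $\bigwedge^r V_J$. Indeed, it sends each basis vector $f_A$ with $A\subseteq J$ to $\pm f_{A\cup K}$, and these images are distinct basis vectors. Therefore $W\wedge f_K$ is the isomorphic image of $\pi_J^F(W)$, which gives the dimension equality.

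Finally, every $f_{A\cup K}$ lies in $E_K$, and $f_K\in\bigwedge^d V$, so $W\wedge f_K\subseteq U\cap E_K$. Consequently
$$\dim \pi_J^F(W)\le \dim(U\cap E_K).$$

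\textbf{Step 2: double counting via leading terms.} Fix any total order on $\binom{[n]}{r+d}$. For a nonzero $u\in\bigwedge^{r+d}V$, let $\mathrm{lt}(u)$ be the smallest $S$ with nonzero coefficient of $f_S$ in $u$.

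By Gaussian elimination, any subspace $X$ satisfies $\dim X=|\{\mathrm{lt}(x): 0\ne x\in X\}|$. Apply this to $X=U\cap E_K$. Every leading set of a nonzero element of $U\cap E_K$ contains $K$, because that element is supported on sets containing $K$. It is also a leading set of $U$. Hence, with $\mathrm{Lead}(U)$ the set of leading sets of $U$,
$$\dim(U\cap E_K)\le |\{S\in \mathrm{Lead}(U): S\supseteq K\}|.$$

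Now sum over all $K\in\binom{[n]}{d}$. Each $S\in\mathrm{Lead}(U)$ has exactly $\binom{r+d}{d}$ subsets of size $d$, so it is counted that many times. Combining with Step 1,
$$\sum_{J\in\binom{[n]}{n-d}}\dim\pi_J^F(W)\le \binom{r+d}{d}\dim U.$$

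\textbf{Step 3: normalization.} Use the identity
$$\binom{n}{d}\binom{n-d}{r}=\frac{n!}{d!\,r!\,(n-d-r)!}=\binom{n}{r+d}\binom{r+d}{d}.$$
Dividing the inequality of Step 2 by $\binom{n}{n-d}\binom{n-d}{r}=\binom{n}{d}\binom{n-d}{r}$ gives exactly the stated bound. The hypothesis $0<r\le n-d$ is only needed so that all the binomial coefficients are positive.

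The main obstacle is Step 2. The subspaces $W\wedge f_K$ for different $K$ need not be independent inside $U$, so their dimensions cannot simply be added. The leading-term count handles this uniformly for every $K$ at once, because a single fixed order on $\binom{[n]}{r+d}$ serves all $K$ simultaneously.
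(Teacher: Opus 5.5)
Your proof is correct and complete. The paper gives no proof of this lemma to compare against: it quotes it from Scott--Wilmer \cite{SW2021}, and what you have written is a self-contained derivation.

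Your argument is in the spirit of the hypergraph--subspace (leading-term) correspondence that the paper credits to Scott--Wilmer for Lemma~\ref{SW1}. Its main convenience is that \emph{any} total order on $\binom{[n]}{r+d}$ works. You only use two facts:
\begin{enumerate}
\item leading sets of nonzero elements of a subspace of $U$ are leading sets of $U$;
\item nonzero elements of $E_K$ have leading sets containing $K$.
\end{enumerate}
So you never have to match leading sets of $\pi_J^F(W)$ with those of $U$ via $A\mapsto A\cup K$. That matching would require an order compatible with adjoining $K$, such as lex or colex.

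The argument also has other advantages:
\begin{itemize}
\item It uses no genericity of $F$, consistent with the statement holding for every $F\in\mathcal{B}(V)$.
\item It works verbatim over an arbitrary field.
\item It yields the unnormalized inequality $\sum_{J}\dim\pi_J^F(W)\le\binom{r+d}{d}\dim\bigl(W\wedge\bigwedge^d V\bigr)$.
\end{itemize}

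Combined with the analogous order-free count for projections, namely $\mathrm{Lead}(\pi_J^F(W))\supseteq\{A\in\mathrm{Lead}(W): A\subseteq J\}$, averaging over $J$ also recovers Lemma~\ref{SW1} (with $c=d$) as a corollary.
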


To prove Theorem \ref{Main3}, we need to introduce 
the following lemma concerning the dimension of wedge products over direct sums. It will be crucial for handling the multipart structure.
\begin{lemma}\label{ad1}
 Let $V \cong \mathbb{R}^{n}$ and $V = V^{(1)} \oplus \cdots \oplus V^{(r)}$. 
For each $k\in [r]$, let $d_k\geq 0$ and let $U_k \subseteq \bigwedge^{d_k} V^{(k)}$ be a subspace. 
Then
$$
\dim\left( \bigwedge_{k=1}^r U_k \right) = \prod_{k=1}^r \dim (U_k).
$$
\end{lemma}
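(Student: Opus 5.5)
We would prove Lemma \ref{ad1} by choosing compatible bases and reducing to the combinatorial identity (\ref{new1}). Let $n_k=\dim V^{(k)}$. Fix for each $k$ an ordered basis $f^{(k)}_1,\dots,f^{(k)}_{n_k}$ of $V^{(k)}$, and concatenate them to get an ordered basis $F$ of $V$. For $S_k\subseteq[n_k]$ write $f^{(k)}_{S_k}$ for the corresponding wedge, which is a basis vector of $\bigwedge^{|S_k|}V^{(k)}\subseteq \bigwedge^{|S_k|}V$. Since the index blocks of distinct $V^{(k)}$ are disjoint in $F$, (\ref{new1}) gives
$$f^{(1)}_{S_1}\wedge\cdots\wedge f^{(r)}_{S_r}=\pm f_{S_1\sqcup\cdots\sqcup S_r},$$
where the sets are viewed inside $[n]$ via the block embedding. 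Distinct tuples $(S_1,\dots,S_r)$ give distinct subsets $S_1\sqcup\cdots\sqcup S_r$ of $[n]$. Hence the multilinear map
$$\Phi:\ \bigwedge^{d_1}V^{(1)}\otimes\cdots\otimes\bigwedge^{d_r}V^{(r)}\to\bigwedge^{d_1+\cdots+d_r}V,\qquad u_1\otimes\cdots\otimes u_r\mapsto u_1\wedge\cdots\wedge u_r,$$
sends a basis of the tensor product to $\pm$ distinct basis vectors of $\bigwedge V$. So $\Phi$ is injective.

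Next, $\bigwedge_{k=1}^r U_k$ is by definition (iterating $U\wedge W$) the span of all $u_1\wedge\cdots\wedge u_r$ with $u_k\in U_k$. This span is exactly $\Phi(U_1\otimes\cdots\otimes U_r)$, because elementary tensors span $U_1\otimes\cdots\otimes U_r$ and $\Phi$ is linear. The iterated definition is fine: $U_1\wedge U_2$ is spanned by products $u_1\wedge u_2$, and wedging further with $U_3$ again gives a span of elementary products, by bilinearity. Since $\Phi$ is injective, we get
$$\dim\Bigl(\bigwedge_k U_k\Bigr)=\dim(U_1\otimes\cdots\otimes U_r)=\prod_k\dim U_k.$$
Degenerate cases pose no problem. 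If some $U_k=0$, both sides vanish. If $d_k=0$, then $U_k\subseteq\mathbb{R}$ and the claim still holds.

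The only point needing care is the injectivity of $\Phi$. This rests on the fact that the $U_k$ live in exterior powers of complementary subspaces, so their product basis vectors have disjoint supports. We should state explicitly that the basis $F$ is adapted to the decomposition $V=V^{(1)}\oplus\cdots\oplus V^{(r)}$, which (\ref{new1}) requires. Beyond that, the argument is routine.
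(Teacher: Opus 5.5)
Your proof is correct and follows the paper's argument. Both rest on choosing a basis of $V$ adapted to $V=V^{(1)}\oplus\cdots\oplus V^{(r)}$, so that the products $f^{(1)}_{S_1}\wedge\cdots\wedge f^{(r)}_{S_r}$ are, up to sign, distinct basis vectors of $\bigwedge V$. The only difference is packaging. The paper checks linear independence of the spanning set $G$ by hand, using invertible minors $M_k$ and their Kronecker product. You instead use injectivity of $\Phi$ on $\bigotimes_k\bigwedge^{d_k}V^{(k)}$, together with the standard injectivity of $U_1\otimes\cdots\otimes U_r\to\bigotimes_k\bigwedge^{d_k}V^{(k)}$ over a field; the Kronecker computation verifies exactly this in coordinates.
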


\begin{proof}
The case for $d_k= 0$ or $d_k>\dim (V^{(k)})$ is trivial.
From now on, we assume that $\dim (V^{(k)})=n_k$ and $1\leq d_k\leq n_k$.
Let $m_k = \dim (U_k)$ and fix a basis 
$\{u_{1,k}, u_{2,k}, \dots, u_{m_k,k}\}$ for each $U_k$.
The collection
$$
G = \bigl\{ u_{j_1,1} \wedge u_{j_2,2} \wedge \cdots \wedge u_{j_r,r} : 1 \le j_k \le m_k \bigr\}
$$
spans $\bigwedge_{k=1}^r U_k$ and contains $\prod_{k=1}^r m_k$ vectors. 
It suffices to prove that $G$ is linearly independent.

For each $k$, choose a basis $\{e_1^{(k)}, \dots, e_{n_k}^{(k)}\}$ of $V^{(k)}$.
For a subset $A \subseteq [n_k]$ with $|A| = d_k$, write
$
e_A^{(k)} = \bigwedge_{t \in A} e_t^{(k)}, 
$
where the elements of $A$ are listed in increasing order.
The collection $\bigl\{e_{A_k}^{(k)} : A_k\in \binom{[n_k]}{d_k}\bigr\}$ is a basis of  $\bigwedge^{d_k} V^{(k)}$.
Since $V = \bigoplus_{k=1}^r V^{(k)}$, the set
$
\bigl\{ e_{A_1}^{(1)} \wedge e_{A_2}^{(2)} \wedge \cdots \wedge e_{A_r}^{(r)}: A_k\in \binom{[n_k]}{d_k} \bigr\}
$
 is linearly independent in $\bigwedge^{\,d_1 + \dots + d_r} V$.
For each $u_{j_k,k} \in U_k$, write
$$
u_{j_k,k} = \sum_{ A_k\in \binom{[n_k]}{d_k} } \lambda_{j_k,k}^{A_k} \, e_{A_k}^{(k)}, \qquad \lambda_{j_k,k}^{A} \in \mathbb{R}.
$$
For each fixed \(k\), the vectors \(u_{1,k},\dots,u_{m_k,k}\) are linearly independent. 
Hence, the $m_k \times \binom{n_k}{d_k}$ matrix $\bigl( \lambda_{j_k,k}^{A_k} \bigr)_{j_k,A_k}$ has full row rank $m_k$. 
Consequently, we can choose $m_k$ distinct column indices 
$A_k^{(1)},\dots,A_k^{(m_k)}$ such that the square matrix
$
M_k := \left( \lambda_{j_k,k}^{A_k^{(t_k)}} \right)_{j_k,t_k\in [m_k]}
$
is invertible.

Assume there exist scalars $c_{j_1,\dots,j_r}$ such that
\begin{align}
\sum_{j_1=1}^{m_1} \cdots \sum_{j_r=1}^{m_r} c_{j_1,\dots,j_r} \; 
u_{j_1,1} \wedge \cdots \wedge u_{j_r,r} = 0.\label{fo1} 
\end{align}
Expanding each wedge product gives
$$
u_{j_1,1} \wedge \cdots \wedge u_{j_r,r}
= \sum_{A_1\in \binom{[n_1]}{d_1}} \cdots \sum_{A_r\in \binom{[n_r]}{d_r}} 
\Bigl( \prod_{k=1}^r \lambda_{j_k,k}^{A_k} \Bigr) \,
e_{A_1}^{(1)} \wedge \cdots \wedge e_{A_r}^{(r)} .
$$
Substituting into (\ref{fo1}) and using the linear independence of the vectors $e_{A_1}^{(1)} \wedge \cdots \wedge e_{A_r}^{(r)}$ yields
\begin{align}
\sum_{j_1\in [m_1],\dots,j_r\in[m_r]} c_{j_1,\dots,j_r} \;
\prod_{k=1}^r \lambda_{j_k,k}^{A_k} = 0 \qquad \text{for all } A_1\in \binom{[n_1]}{d_1},\dots,A_r\in \binom{[n_r]}{d_r}.\label{fo2} 
\end{align}
For each $t_k \in [m_k]$, setting $A_k = A_k^{(t_k)}$ in (\ref{fo2}) gives
$$
\sum_{j_1\in [m_1],\dots,j_r\in[m_r]} c_{j_1,\dots,j_r} \;
\prod_{k=1}^r \lambda_{j_k,k}^{A_k^{(t_k)}} = 0 \qquad \text{for all } t_1 \in [m_1],\dots,t_r\in [m_r]. 
$$
The coefficient matrix of this system  is precisely the Kronecker product
$
M:= M_1^{\top} \otimes M_2^{\top} \otimes \cdots \otimes M_r^{\top}.
$
where $
M_k = \left( \lambda_{j_k,k}^{A_k^{(t_k)}} \right)_{j_k,t_k\in [m_k]}.$
Since each $M_k$ is invertible,  $M$ is invertible. 
Thus, $
c_{j_1,\dots,j_r} = 0 \text{ for all } j_1 \in [m_1],\dots,j_r\in [m_r].
$
Consequently, the set $G$ is linearly independent.
\end{proof}

\section{Proofs of  Theorems \ref{Main3} and \ref{Main4}}\label{se3} 

\subsection{Proof of Theorem \ref{Main3}}\label{sse1}

\begin{proof}[\bf Proof of Theorem \ref{Main3}]
Let $V = V^{(1)} \oplus V^{(2)} \oplus \cdots \oplus V^{(r)}$ with $\dim(V^{(k)})=n_k$.
For every pair $(A_i, B_i)$,  let $A_{i,k}=A_i\cap V^{(k)}$ and $B_{i,k}=B_i\cap V^{(k)}$.
Then
$a_{i,k} = \dim (A_{i,k})$ and $b_{i,k} = \dim (B_{i,k})$.
  For each integer $k\in [r]$, we define recursively subspaces $W_i^{(k)}\subseteq\bigwedge^{a_{i,k}}V^{(k)}$ by setting $W_0^{(k)}=\{0\}$ and 
\begin{equation} \label{eq-defi} W_{i+1}^{(k)}= \operatorname{span}\left\{W_i^{(k)}\wedge\bigwedge^{a_{i+1,k}-a_{i,k}}V^{(k)},\ v_{A_{i+1,k}}\right\}.
\end{equation}

Fix $k\in [r]$, we work entirely within $V^{(k)}\cong \mathbb{R}^{n_k}$.
Choose a linear isomorphism $\phi_k: V^{(k)} \rightarrow \mathbb{R}^{n_k}$.
Extend this isomorphism naturally to the exterior algebra 
$\phi_k: \bigwedge V^{(k)} \to \bigwedge  \mathbb{R}^{n_k}.$
Firstly, apply Lemma \ref{SW2} to the finite family
$
\mathcal{C}_k=\bigl\{\phi_k(A_{i,k}),\;\phi_k(B_{i,k}),\; \phi_k(A_{i,k}+B_{i,k}): 
i\in [m]\bigr\}.
$
This yields a nonzero polynomial $G_k$  in the entries of an $ n_k\times n_k$  matrix $ \hat{F}_k$.  
Secondly,   apply Lemma \ref{SW3} to the subspace $\phi_k(W_i^{(k)} )\subseteq \bigwedge^{a_{i,k}} \mathbb{R}^{n_k}$.  
This gives a nonzero polynomial $H_{i,k}(\hat{F_k})$. Define    
$$
P_k(\hat{F}_k) =  G_k(\hat{F}_k) \prod_{i=1}^{m}  H_{i,k}(\hat{F}_k).
$$
Since each factor is a nonzero polynomial, $P_k$ is nonzero. 
Hence, there exists a basis 
$\hat{F}_k= (\hat{f}_1^{(k)},\dots,\hat{f}_{n_k}^{(k)})$ of $\mathbb{R}^{n_k}$ such that $P_k(\hat{F}_k) \neq 0$.
Now pull back this basis to $V^{(k)}$ via $\phi_k^{-1}$. Define
$$
F_k = (f_1^{(k)},\dots,f_{n_k}^{(k)}), \quad \text{where } f_j^{(k)} = \phi_k^{-1}(\hat{f}_j^{(k)}) \in V^{(k)}.
$$
Since $\phi_k$ is an isomorphism, the basis $F_k$ inherits the desired properties from $\hat{F}_k$. 
In particular,  for every  $J\subseteq [n_k]$ and all $i\in[m]$, by Lemmas   \ref{SW2} and \ref{SW3}, we have
\begin{align}
\dim (\pi_{J}^{F_k}(A_{i,k}))& = \min \{a_{i,k},|J|\},\ \dim (\pi_{J}^{F_k}(B_{i,k}))= \min \{b_{i,k},|J|\},\label{foa7}\\
\dim (\pi_{J}^{F_k}(A_{i,k}+B_{i,k}))& = \min \{a_{i,k}+b_{i,k},|J|\},\label{foa4}\\
\dim \Bigl(\pi_{J}^{F_k}(W_i^{(k)})\Bigr)&=\max_{J^*\in \binom{[n_k]}{|J|},F^*\in \mathcal{B}(V^{(k)})} \dim \Bigl(\pi_{J^*}^{F^*}(W_i^{(k)} )\Bigr).\label{foa6}
\end{align}
Choose such an $F_k$ for every $V^{(k)}$.
Then we obtain a  basis of $V$:
$$
F = (f_1^{(1)},\dots,f_{n_1}^{(1)},f_1^{(2)},\dots,f_{n_2}^{(2)},\dots,f_1^{(r)},\dots,f_{n_r}^{(r)}).
$$

For every $i\in [m]$ and  $k\in [r]$, let $n_{i,k}=a_{i,k}+b_{i,k}$ and $ a_i = \sum_{k=1}^r a_{i,k}$. We denote $V_{n_{i,k}}= \pi_{[n_{i,k}]}^{F_k}(V^{(k)})\subseteq V^{(k)}$. Moreover, we define the projected spaces:  
\begin{align*}
 Z_{i}^{(k)}&=\pi_{[n_{i,k}]}^{F_k}(W_i^{(k)})\subseteq \bigwedge^{a_{i,k}} V_{n_{i,k}},\\
X_{i}^{(k)}&=\pi_{[n_{i+1,k}]}^{F_k}(W_i^{(k)})\subseteq \bigwedge^{a_{i,k}} V_{n_{i+1,k}},\\ Y_{i}^{(k)}&=X_{i}^{(k)}\wedge\bigwedge^{a_{i+1,k}-a_{i,k}}V_{n_{i+1,k}} \subseteq \bigwedge^{a_{i+1,k}} V_{n_{i+1,k}}.
\end{align*}
A key observation in our proof is to construct the spaces $Z_i$ and $Y_i$ as follows: 
\begin{align*}
Z_i =\bigwedge_{k=1}^r Z_i^{(k)} \quad \text{and}\quad Y_i= \bigwedge_{k=1}^r Y_i^{(k)}.
\end{align*}
By Lemma \ref{ad1}, we have
\begin{align}
\dim(Z_i)=\prod_{k=1}^r \dim ( Z_i^{(k)}) \quad \text{and} \quad \dim(Y_i)=\prod_{k=1}^r \dim ( Y_i^{(k)}).\label{foa}
\end{align}

\begin{claim}\label{cla1}
For any $i\in [0,m-1]$,  we have $\dim(Z_{i+1})\geq \dim(Y_{i})+1$.
\end{claim}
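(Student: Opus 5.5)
The plan is to show two things: $Y_i\subseteq Z_{i+1}$, and $Z_{i+1}$ contains a vector outside $Y_i$. Together these give $\dim(Z_{i+1})\ge \dim(Y_i)+1$.

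\textbf{Containment.} Write $J_k=[n_{i+1,k}]$ and $\pi_k=\pi^{F_k}_{J_k}$. On the basis $f_A$ the extended projection acts multiplicatively ($\pi_k(f_A)$ is $f_A$ or $0$), so $\pi_k(x\wedge y)=\pi_k(x)\wedge\pi_k(y)$ on $\bigwedge V^{(k)}$. Applying $\pi_k$ to the definition (\ref{eq-defi}) then gives
\[
Z_{i+1}^{(k)}=\operatorname{span}\bigl\{Y_i^{(k)},\ \pi_k(v_{A_{i+1,k}})\bigr\},
\]
using $\pi_k(\bigwedge^c V^{(k)})=\bigwedge^c V_{n_{i+1,k}}$ and $\pi_k(W_i^{(k)})=X_i^{(k)}$. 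Hence $Y_i^{(k)}\subseteq Z_{i+1}^{(k)}$ for every $k$, and wedging over $k$ gives $Y_i\subseteq Z_{i+1}$.

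\textbf{Candidate new vector.} Take $z=\bigwedge_k \pi_k(v_{A_{i+1,k}})\in Z_{i+1}$, and test it against $w=\bigwedge_k \pi_k(v_{B_{i+1,k}})$. By (\ref{foa7}) and (\ref{foa4}), $\pi_k(A_{i+1,k})$ and $\pi_k(B_{i+1,k})$ have dimensions $a_{i+1,k}$ and $b_{i+1,k}$, and their sum is all of $V_{n_{i+1,k}}$. So each $\pi_k(v_{A_{i+1,k}})\wedge\pi_k(v_{B_{i+1,k}})$ is a nonzero top form of $\bigwedge V_{n_{i+1,k}}$. Since the blocks are in direct sum, reordering gives $z\wedge w=\pm\bigwedge_k(\text{top form of block }k)\neq 0$.

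To finish, it would suffice to show $y\wedge w=0$ for every $y\in Y_i$. By multiplicativity, the pairing $y\mapsto y\wedge w$ on $Y_i$ factors as a tensor product of blockwise functionals $\varphi_k(y_k)=y_k\wedge \pi_k(v_{B_{i+1,k}})$ on $Y_i^{(k)}$. Lemma \ref{ad1} identifies $Y_i$ with $\bigotimes_k Y_i^{(k)}$, so the product functional vanishes on $Y_i$ if and only if some single $\varphi_k$ vanishes on all of $Y_i^{(k)}$.

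\textbf{Main obstacle.} Each $Y_i^{(k)}$ is spanned by vectors of the form $\pi_k(v_{A_{j,k}})\wedge s$ with $j\le i$. The skew condition $\dim(A_j\cap B_{i+1})>0$ only says that for each $j$ there is \emph{some} block $k(j)$ with $A_{j,k(j)}\cap B_{i+1,k(j)}\neq 0$, and $k(j)$ may vary with $j$. Mixed generators of $Y_i$, taking different $j$ in different blocks, therefore need not be killed by $w$. I expect this to be the crux.

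What I would try first is to use the genericity (\ref{foa7})--(\ref{foa6}) together with the monotonicity of $a_{\cdot,k}$ and $b_{\cdot,k}$. The hope is that once a block's intersection with $B_{i+1,k}$ is nonzero for some earlier index, the dimension counts force the blockwise functional $\varphi_k$ to vanish on the whole of $X_i^{(k)}\wedge\bigwedge^{a_{i+1,k}-a_{i,k}}V_{n_{i+1,k}}$. If that fails, I would abandon the single test vector $w$ and instead argue $z\notin Y_i$ by restricting to a suitable coordinate subspace in one block at a time.
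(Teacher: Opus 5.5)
Your containment $Y_i\subseteq Z_{i+1}$, the computation $z\wedge w\neq 0$, and the reduction of ``$w$ kills $Y_i$'' to ``some single $\varphi_k$ vanishes on $Y_i^{(k)}$'' are all correct, and the obstacle you isolate is the right one. But the proposal stops there. The gap cannot be closed by any test vector, nor by your fallback, because the claim is false as stated. Take $V=\mathbb{R}^4$, $V^{(1)}=\langle e_1,e_2\rangle$, $V^{(2)}=\langle e_3,e_4\rangle$ and
\[
(A_1,B_1)=(\langle e_1,e_3\rangle,\langle e_2,e_4\rangle),\qquad (A_2,B_2)=(\langle e_2,e_4\rangle,\langle e_1,e_3\rangle),\qquad (A_3,B_3)=(\langle e_2,e_3\rangle,\langle e_1,e_4\rangle).
\]
The diagonal intersections are trivial, and $A_1\cap B_2=A_1$, $A_1\cap B_3=\langle e_1\rangle$, $A_2\cap B_3=\langle e_4\rangle$, so this is a skew Bollob\'as system. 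All $a_{i,k}=b_{i,k}=1$, so monotonicity is automatic. Also $n_{i,k}=2=\dim V^{(k)}$, so every projection is the identity and the choice of $F_k$ plays no role. Then $W_2^{(1)}=W_3^{(1)}=\langle e_1,e_2\rangle$ and $W_2^{(2)}=W_3^{(2)}=\langle e_3,e_4\rangle$. Hence $Y_2=Z_2=Z_3=\langle e_1,e_2\rangle\wedge\langle e_3,e_4\rangle$, of dimension $4$, and the claim fails at $i=2$. Your $z=e_2\wedge e_3=v_{A_{2,1}}\wedge v_{A_{1,2}}$ is itself a mixed generator lying in $Y_2$. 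Correspondingly, $\varphi_1(e_2)=e_2\wedge e_1\neq 0$ and $\varphi_2(e_3)=e_3\wedge e_4\neq 0$.

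The paper's own proof does not get around this either. It derives a contradiction from $S=\emptyset$ by asserting that every element of $W_i\wedge\bigwedge^{a_{i+1}-a_i}V$, with $W_i=\bigwedge_k W_i^{(k)}$, is a linear combination of elements of $v_{A_h}\wedge\bigwedge^{a_{i+1}-a_h}V$ with $h\le i$. That step discards exactly the mixed products you flagged: in the example $e_2\wedge e_3\in W_2\setminus\operatorname{span}\{v_{A_1},v_{A_2}\}$, and $e_2\wedge e_3\wedge e_1\wedge e_4\neq 0$.

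The example does not refute Theorem~\ref{Main3}, since its weighted sum is $3/4$. It does show that the full product $\bigwedge_k Z_i^{(k)}$ overcounts: here $\dim Z_2=4$ after only two pairs. A repair would presumably have to track only the span of projected products $\pi_{i+1}(v_{A_h})\wedge s$ with one common index $h$ in all blocks. That space is killed by $\pi_{i+1}(v_{B_{i+1}})$, but it is not a product of blockwise spaces. So Lemma~\ref{ad1} and the blockwise Claim~\ref{cla2} would need to be replaced by product versions of the Scott--Wilmer estimates.
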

\begin{proof}[Proof of Claim \ref{cla1}]
First, we define projections on $V$. For each $i\in [m]$, we define
$
\pi_i: V \to V_i := \bigoplus_{k=1}^r V_{n_{i,k}}
$ 
as the direct sum of projections by setting that for each  $v = \sum_{k=1}^r v^{(k)} \in V$, 
$$
\pi_i(v) := \sum_{k=1}^r \pi_{[n_{i,k}]}^{F_k}(v^{(k)})
$$
In the sequel, we need to extend this projection naturally to the exterior algebra $\bigwedge V$ by setting $\pi_i: \bigwedge V \to \bigwedge  V_i$, e.g., for $w=v\wedge u$, then $\pi_i (w)=\pi_i(v)\wedge \pi_i (u)$. 

Since $A_{i+1} = \bigoplus_{k=1}^r A_{i+1,k}$, we may write
$v_{A_{i+1}}=\bigwedge_{k=1}^{r}v_{A_{i+1,k}}\in\bigwedge^{a_{i+1}}V$.
Consider its projection $\pi_{i+1}(v_{A_{i+1}}) = \bigwedge_{k=1}^r \pi_{[n_{i+1,k}]}^{F_k}(v_{A_{i+1,k}})$.
By definition, we have 
\begin{align*}
Z_{i+1}^{(k)} = &\operatorname{span}\left\{ \pi_{[n_{i+1,k}]}^{F_k}\Bigl(W_i^{(k)}\wedge\bigwedge^{a_{i+1,k}-a_{i,k}}V^{(k)}\Bigr),\ \pi_{[n_{i+1,k}]}^{F_k}(v_{A_{i+1,k}}) \right\}\\
=&\operatorname{span}\left\{ Y_i^{(k)},\ \pi_{[n_{i+1,k}]}^{F_k}(v_{A_{i+1,k}}) \right\}.
\end{align*}
For every $k\in [r]$, we denote $d_k = \dim(Z_{i+1}^{(k)})$ and $y_k = \dim(Y_i^{(k)})$.
Then
\begin{align}
d_k = 
\begin{cases}
y_k & \text{if } \pi_{[n_{i+1,k}]}^{F_k}(v_{A_{i+1,k}}) \in Y_i^{(k)}, \\
y_k + 1 & \text{otherwise}.\label{foa2}
\end{cases}
\end{align}
Let $S = \{ k \in [r] : \pi_{[n_{i+1,k}]}^{F_k}(v_{A_{i+1,k}}) \notin Y_i^{(k)} \}$. 
We claim that $S\neq \emptyset$. 
Otherwise, if $ S = \emptyset$, then
\begin{align}
\pi_{i+1}(v_{A_{i+1}}) = \bigwedge_{k=1}^r \pi_{[n_{i+1,k}]}^{F_k}(v_{A_{i+1,k}}) \in \bigwedge_{k=1}^r Y_i^{(k)} = Y_i.\label{foa3}
\end{align}
We denote 
$W_i =\bigwedge_{k=1}^r W_i^{(k)}$.
Then $Y_{i}^{(k)}=\pi_{[n_{i+1,k}]}^{F_k}\Bigl(W_i^{(k)}\wedge\bigwedge^{a_{i+1,k}-a_{i,k}}V^{(k)}\Bigr)\subseteq \pi_{[n_{i+1,k}]}^{F_k}(W_i^{(k)})\wedge\bigwedge^{a_{i+1,k}-a_{i,k}}\pi_{i+1}(V)$ implies
$Y_i\subseteq \pi_{i+1}(W_i)\wedge\bigwedge^{a_{i+1}-a_{i}}\pi_{i+1}(V).$
By the definition (\ref{eq-defi}), we get 
\[  W_{i}^{(k)}=\mathrm{span}\left\{ v_{A_{1,k}}\wedge \bigwedge^{a_{i,k}-a_{1,k}}V^{(k)}, \ \ v_{A_{2,k}}\wedge \bigwedge^{a_{i,k}-a_{2,k}}V^{(k)},\  
\ldots \ , \ v_{A_{i,k}}\right\}, \]
which implies that any element of  $W_i\wedge\bigwedge^{a_{i+1}-a_{i}} V$ is a linear combination of elements $\bigl\{v_{A_{h}}\wedge\bigwedge^{a_{i+1}-a_{h}}V: h\leq i\bigr\}$. Combining equation (\ref{foa3}) and $Y_i\subseteq \pi_{i+1}(W_i)\wedge\bigwedge^{a_{i+1}-a_{i}}\pi_{i+1}(V)$, we obtain that  $\pi_{i+1}(v_{A_{i+1}})$ is a linear combination of elements $\bigl\{\pi_{i+1}(v_{A_{h}})\wedge\bigwedge^{a_{i+1}-a_{h}}\pi_{i+1}(V): h\leq i\bigr\}$.
The skew Bollob\'{a}s assumption says that $\dim(A_h\cap B_{i+1})>0$ for all $h\leq i$. It follows  that $v_{A_{h}}\wedge v_{B_{i+1}}=0$ and   $\pi_{i+1}(v_{A_{h}})\wedge \pi_{i+1}(v_{B_{i+1}})=0$ for all $h\leq i$. 
This leads to $\pi_{i+1}(v_{A_{i+1}})\wedge \pi_{i+1}(v_{B_{i+1}})=0$.

On the other hand, 
by equations (\ref{foa7}) and (\ref{foa4}), we know that for every $k\in [r]$, 
$$ \pi_{[n_{i+1,k}]}^{F_k}(A_{i+1,k})\cap\pi_{[n_{i+1,k}]}^{F_k}(B_{i+1,k}) = \{0\}.$$
By the property (\ref{new1}) and the definition of $\pi_{i+1}$, it follows that $\pi_{i+1}(v_{A_{i+1}})\wedge \pi_{i+1}(v_{B_{i+1}})\neq 0$, which contradicts with the previous argument.
Therefore, we must have $ S\neq \emptyset$.

If there exists some $k\in [r]$ such that $y_k=0$, then $\dim(Z_{i+1})\geq 1=\dim(Y_{i})+1$ holds  because $d_k\geq 1$ for all $k\in [r]$.
Now assume that $y_k\geq 1$ for all $k\in [r]$. Since $ S\neq \emptyset$, applying equations (\ref{foa}) and (\ref{foa2}) to compute the dimension difference yields
\[ 
\dim(Z_{i+1}) - \dim(Y_i) = \prod_{k=1}^r d_k - \prod_{k=1}^r y_k
= \left( \prod_{k \notin S} y_k \right) \left( \prod_{k \in S} (y_k + 1) - \prod_{k \in S} y_k \right) \geq 1. \qedhere 
\] 
\end{proof}

\begin{claim}\label{cla2}
For any $i\in [m-1]$ and $k\in [r]$, by denoting $n_{i,k}=a_{i,k}+b_{i,k}$, we have  
$$
\frac{\dim(Y_i^{(k)})}{\binom{n_{i+1,k}}{a_{i+1,k}}}
\geq \frac{\dim(Z_i^{(k)})}{\binom{n_{i,k}}{a_{i,k}}}.$$ 
\end{claim}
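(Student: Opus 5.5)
The claim lives entirely inside the single block $V^{(k)}\cong\mathbb{R}^{n_k}$, so I will fix $k$ and write $a=a_{i,k}$, $a'=a_{i+1,k}$, $n=n_{i,k}$, $n'=n_{i+1,k}$. The monotonicity hypotheses give $a\le a'$ and $b_{i,k}\ge b_{i+1,k}$. The plan is to pass from $Z_i^{(k)}$ to $Y_i^{(k)}$ in two moves, each losing nothing in normalized density: first project $W_i^{(k)}$ to the smaller coordinate set, then wedge with $\bigwedge^{a'-a}$.

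\emph{Step 1 (projection, from $n$ to $n-b_{i,k}+b_{i+1,k}$).} Set $\tilde n = a+b_{i+1,k}$, so that $\tilde n\le n$ and $n' = \tilde n + (a'-a)$. I would apply Lemma \ref{ad3} inside $V_{n}=\pi^{F_k}_{[n]}(V^{(k)})\cong\mathbb{R}^{n}$ to the subspace $Z_i^{(k)}\subseteq\bigwedge^{a}V_{n}$ with $d=n-\tilde n$. This gives some $J\in\binom{[n]}{\tilde n}$ with
\[
\frac{\dim \pi_J^{F_k}(Z_i^{(k)})}{\binom{\tilde n}{a}}\ge \frac{\dim Z_i^{(k)}}{\binom{n}{a}} .
\]
Since $\pi_J^{F_k}\circ\pi_{[n]}^{F_k}=\pi_J^{F_k}$, the left side equals $\dim\pi_J^{F_k}(W_i^{(k)})/\binom{\tilde n}{a}$. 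The genericity property (\ref{foa6}) says that all $J$ of a given size attain the same maximal dimension, so I may replace $J$ by $[\tilde n]$. Degenerate cases, where $a=0$ or $b_{i+1,k}=0$ and Lemma \ref{ad3} requires $r>0$, would be handled directly: for instance $a=0$ makes $\dim\le 1$ and the claim is immediate, and $\tilde n=n$ is trivial.

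\emph{Step 2 (wedging, from $\tilde n$ to $n'$).} Set $U=\pi^{F_k}_{[\tilde n]}(W_i^{(k)})\subseteq \bigwedge^{a}V_{\tilde n}$. I want to compare $U$ with $X_i^{(k)}=\pi^{F_k}_{[n']}(W_i^{(k)})$, which lives in the larger space $V_{n'}$ and satisfies $\pi_{[\tilde n]}(X_i^{(k)})=U$. The natural tool is Lemma \ref{ad3-2} inside $V_{n'}\cong\mathbb{R}^{n'}$, applied with $W=X_i^{(k)}$, $r=a$ and $d=a'-a$:
\[
\frac{\dim(Y_i^{(k)})}{\binom{n'}{a'}}\ge\frac{1}{\binom{n'}{\tilde n}}\sum_{J\in\binom{[n']}{\tilde n}}\frac{\dim\pi_J^{F_k}(X_i^{(k)})}{\binom{\tilde n}{a}} .
\]
The key observation is that $\pi_J^{F_k}(X_i^{(k)})=\pi_J^{F_k}(W_i^{(k)})$, and by (\ref{foa6}) every such term has the same dimension as $\dim U$. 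So the average collapses to $\dim U/\binom{\tilde n}{a}$. Chaining this with Step 1 gives the claim.

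\emph{Main obstacle.} The delicate point is the use of (\ref{foa6}). It is stated for projections of $W_i^{(k)}$ in the basis $F_k$, so I must make sure that every projection I use is of that form, with $J\subseteq[n_k]$, and that the sizes are in range $[n_k-1]$. Sizes equal to $n_k$ give the identity and are handled separately. I also need to verify the hypotheses $0<r\le n-d$ of Lemmas \ref{ad3} and \ref{ad3-2}, i.e. $a\le\tilde n$, and treat $a=0$ or $a'=a$ as the trivial cases where no wedging happens.
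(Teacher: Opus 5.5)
Your proof is correct, and it is organized differently from the paper's. The paper splits into three cases by comparing $n_{i,k}$ with $n_{i+1,k}$, and it passes through the level $\min\{n_{i,k},n_{i+1,k}\}$:
\begin{itemize}
\item wedging inside a fixed ambient space is handled by the local LYM inequality (Lemma \ref{SW1});
\item the change of ambient dimension is handled by Lemma \ref{ad3} in Case 2, or by Lemma \ref{ad3-2} with $d=n_{i+1,k}-n_{i,k}$ in Case 3;
\item Case 1 needs only Lemma \ref{SW1}.
\end{itemize}

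You instead factor the transition $(a_{i,k},b_{i,k})\to(a_{i+1,k},b_{i+1,k})$ as $(a_{i,k},b_{i,k})\to(a_{i,k},b_{i+1,k})\to(a_{i+1,k},b_{i+1,k})$. This goes through the lower level $\tilde n=a_{i,k}+b_{i+1,k}\le\min\{n_{i,k},n_{i+1,k}\}$:
\begin{itemize}
\item Lemma \ref{ad3} shrinks $b$ by projection;
\item Lemma \ref{ad3-2} with $d=a_{i+1,k}-a_{i,k}$ grows $a$ by wedging, with $X_i^{(k)}\wedge\bigwedge^{d}V_{n_{i+1,k}}=Y_i^{(k)}$;
\item the genericity (\ref{foa6}) is used only at the single size $\tilde n$, both to replace the maximizing $J$ by $[\tilde n]$ and to collapse the average.
\end{itemize}

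What your route buys is a uniform, case-free argument that never needs Lemma \ref{SW1}. It also shows exactly where each monotonicity hypothesis enters: $b_{i+1,k}\le b_{i,k}$ gives $\tilde n\le n_{i,k}$, and $a_{i,k}\le a_{i+1,k}$ gives a nonnegative wedge degree. The cost is that every step invokes both projection lemmas, whereas each of the paper's cases uses at most one of them.

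Your edge-case remarks are fine and slightly more careful than the paper's text.
\begin{itemize}
\item For $a_{i,k}=0$, both Lemma \ref{ad3} and Lemma \ref{ad3-2} hold trivially, because every $\pi_J$ is the identity on $\bigwedge^0$. If $\dim Z_i^{(k)}=1$, then $W_i^{(k)}=\bigwedge^0 V^{(k)}$ and $Y_i^{(k)}=\bigwedge^{a_{i+1,k}}V_{n_{i+1,k}}$.
\item In (\ref{foa6}), the sizes $|J|\in\{0,n_k\}$ each admit only one $J$, so nothing needs checking there.
\end{itemize}
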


\begin{proof}[Proof of Claim \ref{cla2}]
To start with, we recall the assumption that 
$a_{1,k}\le a_{2,k}\le \cdots \le a_{m,k}$ and $b_{1,k}\ge b_{2,k} \ge \cdots \ge b_{m,k}$ for every $k\in [r]$. Next, 
we divide the proof into three cases according to the relation between $n_{i,k}$ and $n_{i+1,k}$, where $n_{i,k}=a_{i,k} +b_{i,k}$. 

{\bf Case 1.} If $n_{i,k}= n_{i+1,k}$, i.e., $b_{i,k}- b_{i+1,k}= a_{i+1,k} - a_{i,k}\geq 0$,  then $X_i^{(k)}=Z_i^{(k)}\subseteq \bigwedge^{a_{i,k}} V_{n_{i,k}}$ and $Y_i^{(k)}= Z_i^{(k)} \wedge \bigwedge^{a_{i+1,k}-a_{i,k}} V_{n_{i,k}}$.
Since $V_{n_{i,k}}\cong \mathbb{R}^{n_{i,k}}$,  Lemma \ref{SW1} gives
$$
\frac{\dim(Y_i^{(k)})}{\binom{n_{i+1,k}}{a_{i+1,k}}}
= \frac{\dim\left( Z_i^{(k)} \wedge \bigwedge^{a_{i+1,k}-a_{i,k}} V_{n_{i,k}} \right)}{\binom{n_{i,k}}{a_{i+1,k}}}
\geq \frac{\dim(Z_i^{(k)})}{\binom{n_{i,k}}{a_{i,k}}}.
$$

{\bf Case 2.}
If $n_{i,k}> n_{i+1,k}$, i.e., $b_{i,k}- b_{i+1,k}>a_{i+1,k} - a_{i,k}\geq 0$, then for any $J\in \binom{[n_{i,k}]}{n_{i+1,k}}$, we have 
$\pi_{J}^{F_k}(W_i^{(k)})=\pi_{J}^{F_k}\Bigl(\pi_{[n_{i,k}]}^{F_k}(W_i^{(k)})\Bigr)=\pi_{J}^{F_k}(Z_i^{(k)})$. 
By equation (\ref{foa6}), for every $J\in \binom{[n_{i,k}]}{n_{i+1,k}}$,  
\begin{align*}
\dim \Bigl(\pi_{J}^{F_k}(W_i^{(k)} )\Bigr)= \dim \Bigl(\pi_{[n_{i+1,k}]}^{F_k}(W_i^{(k)} )\Bigr)=\dim \Bigl(\pi_{[n_{i+1,k}]}^{F_k}(Z_i^{(k)})\Bigr).
\end{align*}
 Let $F_k^i = (f_1^{(k)},\dots,f_{n_{i,k}}^{(k)})$ be the restriction of the basis $F_k$ to the first $n_{i,k}$ vectors. Consequently,
\begin{align}
\dim \Bigl(\pi_{[n_{i+1,k}]}^{F_k}(Z_i^{(k)})\Bigr)&= \max_{J\in \binom{[n_{i,k}]}{n_{i+1,k}}} \dim \Bigl(\pi_{J}^{F_k}(W_i^{(k)} )\Bigr)=\max_{J\in \binom{[n_{i,k}]}{n_{i+1,k}}} \dim \Bigl(\pi_{J}^{F_k}(Z_i^{(k)} )\Bigr)\notag\\
&=\max_{J\in \binom{[n_{i,k}]}{n_{i+1,k}}} \dim \Bigl(\pi_{J}^{F_k^i}(Z_i^{(k)} )\Bigr).\label{fi2}
\end{align}
Observe  that $Z_{i}^{(k)}\subseteq \bigwedge^{a_{i,k}} V_{n_{i,k}}$ and $F_k^i\in \mathcal{B}(V_{n_{i,k}})$.
Now we apply Lemmas \ref{SW1} and \ref{ad3} to obtain
\begin{align*}
\frac{\dim(Y_i^{(k)})}{\binom{n_{i+1,k}}{a_{i+1,k}}}
&= \frac{\dim\Bigl( X_i^{(k)} \wedge \bigwedge^{a_{i+1,k}-a_{i,k}} V_{n_{i+1,k}} \Bigr)}{\binom{n_{i+1,k}}{a_{i,k}+(a_{i+1,k}-a_{i,k})}}
\overset{\text{Lemma \ref{SW1}}}{\geq}\ \frac{\dim(X_i^{(k)})}{\binom{n_{i+1,k}}{a_{i,k}}}\\
&=\frac{\dim\Bigl(\pi_{[n_{i+1,k}]}^{F_k}(Z_i^{(k)})\Bigr)}{\binom{n_{i+1,k}}{a_{i,k}}}\overset{\text{(\ref{fi2})}}{=}\max_{J\in \binom{[n_{i,k}]}{n_{i+1,k}}}\frac{\dim \Bigl(\pi_{J}^{F_k^i}(Z_i^{(k)})\Bigr)}{\binom{n_{i,k}-(n_{i,k}-n_{i+1,k})}{a_{i,k}}}\overset{\text{Lemma \ref{ad3}}}{\geq}\ \frac{\dim(Z_i^{(k)})}{\binom{n_{i,k}}{a_{i,k}}}.
\end{align*}

{\bf Case 3.}
If $n_{i,k}<n_{i+1,k}$, i.e., $a_{i+1,k} - a_{i,k}>b_{i,k}- b_{i+1,k}\geq 0$, then for any $J\in \binom{[n_{i+1,k}]}{n_{i,k}}$, we have 
$\pi_{J}^{F_k}(W_i^{(k)})=\pi_{J}^{F_k}\Bigl(\pi_{[n_{i+1,k}]}^{F_k}(W_i^{(k)})\Bigr)=\pi_{J}^{F_k}(X_i^{(k)})$. 
Let $c=b_{i,k}- b_{i+1,k}\geq 0$ and $d=a_{i+1,k} - a_{i,k}-c\geq 0$. Then $n_{i+1,k}=n_{i,k}+d$. 
Equation (\ref{foa6}) implies that for every $J\in \binom{[n_{i+1,k}]}{n_{i,k}}$,  
\begin{align}
\dim \Bigl(\pi_{J}^{F_k}(W_i^{(k)} )\Bigr)= \dim \Bigl(\pi_{[n_{i,k}]}^{F_k}(W_i^{(k)} )\Bigr)=\dim(Z_i^{(k)}).\label{fi1}
\end{align}
Let $F_k^{i+1} = (f_1^{(k)},\dots,f_{n_{i+1,k}}^{(k)})\in \mathcal{B}(V_{n_{i+1,k}})$.
Note that $X_{i}^{(k)}\subseteq \bigwedge^{a_{i,k}} V_{n_{i+1,k}}$.
Applying Lemmas \ref{SW1} and \ref{ad3-2}, we obtain
\begin{align*}
\frac{\dim(Y_i^{(k)})}{\binom{n_{i+1,k}}{a_{i+1,k}}}
&= \frac{\dim\Bigl( X_i^{(k)} \wedge \bigwedge^{c+d} V_{n_{i+1,k}} \Bigr)}{\binom{n_{i+1,k}}{a_{i,k}+c+d}}
\overset{\text{Lemma \ref{SW1}}}{\geq}\ \frac{\dim\Bigl( X_i^{(k)} \wedge \bigwedge^{d} V_{n_{i+1,k}} \Bigr)}{\binom{n_{i+1,k}}{a_{i,k}+d}}\\
&\overset{\text{Lemma \ref{ad3-2}}}{\geq}\ \frac{1}{\binom{n_{i+1,k}}{n_{i+1,k}-d}}\sum_{J\in \binom{[n_{i+1,k}]}{n_{i+1,k}-d}}\frac{\dim \Bigl(\pi_{J}^{F_k^{i+1}}(X_i^{(k)})\Bigr)}{ \binom{n_{i+1,k}-d}{a_{i,k}}}=\frac{1}{\binom{n_{i+1,k}}{n_{i,k}}}\sum_{J\in \binom{[n_{i+1,k}]}{n_{i,k}}}\frac{\dim \Bigl(\pi_{J}^{F_k}(X_i^{(k)})\Bigr)}{ \binom{n_{i,k}}{a_{i,k}}}\\
&=\frac{1}{\binom{n_{i+1,k}}{n_{i,k}}}\sum_{J\in \binom{[n_{i+1,k}]}{n_{i,k}}}\frac{\dim \Bigl(\pi_{J}^{F_k}(W_i^{(k)})\Bigr)}{ \binom{n_{i,k}}{a_{i,k}}}\overset{\text{(\ref{fi1})}}{=}\frac{\dim(Z_i^{(k)})}{\binom{n_{i,k}}{a_{i,k}}}.\qedhere
\end{align*}
\end{proof}

We now return to show the proof of Theorem \ref{Main3}. 
By Claim \ref{cla2} and equation (\ref{foa}), we obtain that for any $i\in [m-1]$,
$$
\frac{\dim(Y_{i})}{\prod_{k=1}^r \binom{n_{i+1,k}}{a_{i+1,k}}}\geq \frac{\dim(Z_{i})}{\prod_{k=1}^r \binom{n_{i,k}}{a_{i,k}}}.
$$
This together with  Claim \ref{cla1} implies
\begin{align*}
1&\geq \frac{\dim(Z_m)}{\prod_{k=1}^r \binom{n_{m,k}}{a_{m,k}}}\geq \frac{1}{\prod_{k=1}^r \binom{n_{m,k}}{a_{m,k}}}+\frac{\dim(Y_{m-1})}{\prod_{k=1}^r \binom{n_{m,k}}{a_{m,k}}}\geq \frac{1}{\prod_{k=1}^r \binom{n_{m,k}}{a_{m,k}}}+\frac{\dim(Z_{m-1})}{\prod_{k=1}^r \binom{n_{m-1,k}}{a_{m-1,k}}}\geq \cdots\\
&\geq \sum_{i=1}^m \frac{1}{\prod_{k=1}^r \binom{n_{i,k}}{a_{i,k}}}=\sum_{i=1}^m \frac{1}{\prod_{k=1}^r \binom{a_{i,k}+b_{i,k}}{a_{i,k}}}.
\end{align*}
Thus, the desired weighted bound is satisfied, completing the proof of Theorem \ref{Main3}. 
\end{proof}

\subsection{Proof of Theorem \ref{Main4}}
The definition of the exterior algebra given in 
subsection \ref{sse1} extends directly to an arbitrary field  $\mathbb{F}$, only the underlying scalars change.
Let $V$ be an $n$-dimensional vector space over $\mathbb{F}$.  
Its exterior algebra is  
$
\bigwedge V = \bigoplus_{r=0}^{n} \bigwedge^r V.
$ 
For any ordered basis $F=(f_1,\dots,f_{n})$ of $V$ and a subset $A\in\binom{[n]}{r}$, we write  
$
f_A = \bigwedge_{a\in A} f_a \in \bigwedge^r V,
$
 where the wedge product is taken with indices in increasing order.  
The collection $\{f_A : A\in\binom{[n]}{r}\}$ is a basis of $\bigwedge^r V$. Consequently,  
$
\dim_{\mathbb{F}}\Bigl(\bigwedge^r V\Bigr) = \binom{n}{r}.
$ 
Let $C$ be a $d$-dimensional subspace of $V$ with a basis $\{c_1,\ldots,c_d\}$ and let 
$
v_C=c_1\wedge\cdots \wedge c_d\in \bigwedge^{d}V.
$ 
Note that $v_C$ depends on the choice of the basis and any two such expressions differ only by a nonzero scalar factor. 
The following basic property of the wedge products of vectors of subspaces will be essential.

\begin{lemma}[{see \cite[Lemma 6.11 and Exercise 6.16]{BF1992} or \cite{HF24}}]\label{BT1}
Let  $\mathbb{F}$  be an arbitrary field and let $V$ be an $n$-dimensional vector space over $\mathbb{F}$. Let $B$ and $C$ be subspaces of $V$. Then
$v_B\wedge v_C=0$ if and only if $B\cap C\neq \{0\}$.
\end{lemma}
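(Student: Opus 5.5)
Let $B=\{b_1,\dots,b_k\}$ and $C=\{c_1,\dots,c_l\}$ be bases of $B$ and $C$, so that $v_B=b_1\wedge\cdots\wedge b_k$ and $v_C=c_1\wedge\cdots\wedge c_l$. Then $v_B\wedge v_C=b_1\wedge\cdots\wedge b_k\wedge c_1\wedge\cdots\wedge c_l$. The plan is to reduce the lemma to the standard fact, valid over any field $\mathbb{F}$, that a wedge product of vectors $u_1\wedge\cdots\wedge u_s$ is zero if and only if $u_1,\dots,u_s$ are linearly dependent. Granting this, $v_B\wedge v_C=0$ if and only if the $k+l$ vectors $b_1,\dots,b_k,c_1,\dots,c_l$ are linearly dependent. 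This happens exactly when $\dim(B+C)<k+l$, i.e. when $\dim(B\cap C)>0$, i.e. $B\cap C\neq\{0\}$.

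It remains to justify the standard fact; this is the only real step, though it is elementary.

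\emph{Dependent implies zero.} Suppose some $u_j=\sum_{t\neq j}\lambda_t u_t$. Expand by multilinearity. Every resulting term contains a repeated vector $u_t$, hence vanishes by the alternating property $u\wedge u=0$. This holds over any field, including characteristic $2$, since in the exterior algebra $u\wedge u=0$ is a defining relation and not merely a consequence of anticommutativity.

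\emph{Independent implies nonzero.} Extend $u_1,\dots,u_s$ to an ordered basis $F=(f_1,\dots,f_n)$ of $V$ with $f_j=u_j$ for $j\le s$. Then $u_1\wedge\cdots\wedge u_s=f_{[s]}$, which is a member of the basis $\{f_A: A\in\binom{[n]}{s}\}$ of $\bigwedge^s V$ recalled above. In particular it is nonzero.

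Finally, the statement does not depend on the choice of bases: changing the basis of $B$ or of $C$ multiplies $v_B$ or $v_C$ by a nonzero scalar (the determinant of the change-of-basis matrix), which does not affect whether $v_B\wedge v_C$ vanishes. The degenerate cases $B=\{0\}$ or $C=\{0\}$ are handled by the convention $v_{\{0\}}=1$, for which the product is nonzero and the intersection is trivial, consistent with the claim.
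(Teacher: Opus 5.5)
Your argument is correct. You reduce to the fact that $u_1\wedge\cdots\wedge u_s=0$ if and only if the $u_j$ are linearly dependent, using the alternating property in one direction and extension to a basis $F$ with $u_1\wedge\cdots\wedge u_s=f_{[s]}$ in the other, and combine this with $\dim(B+C)=\dim B+\dim C-\dim(B\cap C)$. The paper gives no proof of its own: it cites Babai--Frankl and remarks in Section 2 that the claim follows from the formula for $f_A\wedge f_B$ in a basis adapted to $B$ and $C$, which is the same idea as yours.
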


We shall also need a simple criterion for linear independence, often called the triangular criterion.

\begin{lemma}[{see \cite[Proposition 2.9]{BF1992} or  \cite{HF24}}]\label{TC}
Let  $\mathbb{F}$  be an arbitrary field. Let $V$ and $T$ be linear spaces over $\mathbb{F}$ and $\Omega$ an arbitrary set. Let  $g : V \times \Omega \to T$ be a function that is linear in the first variable. For $i\in [m]$, let $v_i \in V$ and $a_i \in \Omega$ be such that
$$
g(v_i, a_j) 
\begin{cases} 
\neq 0 & \text{ if } i = j,\\
=0 & \text{ if } i < j.
\end{cases}
$$
Then $v_1, \ldots, v_m$ are linearly independent.
\end{lemma}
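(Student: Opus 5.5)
The plan is a direct contradiction argument that exploits the triangular vanishing pattern by looking at the \emph{largest} index carrying a nonzero coefficient. No earlier results from the paper are needed; the only inputs are the linearity of $g$ in its first argument and the hypotheses on $g(v_i,a_j)$.

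Suppose, for contradiction, that $v_1,\ldots,v_m$ are linearly dependent. Then there are scalars $c_1,\ldots,c_m\in\mathbb{F}$, not all zero, with $\sum_{i=1}^m c_i v_i=0$. Let $k$ be the largest index such that $c_k\neq 0$, so that
$$\sum_{i=1}^{k} c_i v_i = 0 .$$
Now evaluate the map $g(\cdot,a_k)$ on both sides.

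\begin{itemize}
\item Since $g$ is linear in the first variable, $g(0,a_k)=0$, so the right-hand side gives $0$.
\item The left-hand side expands as
$$g\Bigl(\sum_{i=1}^{k} c_i v_i,\,a_k\Bigr)=\sum_{i=1}^{k} c_i\, g(v_i,a_k).$$
\item For every $i<k$ the hypothesis (the case $i<j$ with $j=k$) gives $g(v_i,a_k)=0$. Hence only the diagonal term survives, and $c_k\, g(v_k,a_k)=0$.
\item Since $g(v_k,a_k)\neq 0$ is a nonzero vector of $T$ and $c_k\in\mathbb{F}$, this forces $c_k=0$. This contradicts the choice of $k$.
\end{itemize}

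Therefore all $c_i$ vanish, and $v_1,\ldots,v_m$ are linearly independent.

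There is no serious obstacle. The one point needing care is the choice of extremal index. The hypothesis only controls $g(v_i,a_j)$ for $i<j$, so we must pick the \emph{maximal} $k$ with $c_k\neq 0$. Then every other surviving term has $i<k$ and vanishes at $a_k$. Choosing the minimal index instead would leave terms $g(v_i,a_k)$ with $i>k$, about which nothing is known.
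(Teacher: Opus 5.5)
Your proof is correct: taking the largest index $k$ with $c_k\neq 0$ and applying $g(\cdot,a_k)$ kills every term with $i<k$, which forces $c_k=0$. The paper does not prove this lemma itself; it cites it from \cite[Proposition 2.9]{BF1992} and \cite{HF24}, and your argument is the standard extremal-index proof of that triangular criterion.
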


\begin{proof}[\bf Proof of Theorem \ref{Main4}]
Let $\mathcal{P}=\{(A_i, B_i)\}_{i=1}^m$  be a skew Bollob\'as system of projective subspaces of $W$.
Let $V$ be an $(n+1)$-dimensional vector space over $\mathbb{F}$  such that $W = \mathbb{P}(V)$.  
For any $i\in [m]$, each projective subspace $A_i$ (resp. $B_i$) corresponds uniquely to a vector subspace $U_i\subseteq V$ (resp. $V_i\subseteq V$) with $U_i\neq\{0\}, V_i\neq\{0\}$.  
The skew Bollob\'as conditions translate to  
$$
U_i\cap V_i = \{0\},\quad U_i\cap V_j \neq \{0\}\ (i<j).
$$
By Lemma \ref{BT1}, we have  
$$
v_{U_i}\wedge v_{V_i} \neq 0,\quad 
v_{U_i}\wedge v_{V_j}  = 0\ (i<j).
$$
It follows from Lemma \ref{TC} that $v_{U_1},\dots,v_{U_m}$ are linearly independent in $\bigwedge V$.
Since $U_i\neq\{0\}, V_i\neq\{0\}$ and $U_i\cap V_i = \{0\}$, we have $U_i\neq V$ and $ V_i\neq V$.
Combining $U_i\neq\{0\}$ and $U_i\neq V$, we obtain $1\le\dim_{\mathbb{F}} (U_i)\leq n$.  
Hence, each $v_{U_i}$ lies in the subspace  
$
S := \bigoplus_{k=1}^{n} \bigwedge^{k}V \subsetneq \bigwedge V .
$ 
From $\dim_{\mathbb{F}} (V)=n+1$, we know that
$$
\dim_{\mathbb{F}}(S) = \sum_{k=1}^{n}\binom{n+1}{k}
                    = 2^{\,n+1} - 2 .
$$
Since $v_{U_1},\dots,v_{U_m}$ are linearly independent   elements of $S$, we obtain  
$
m \le \dim_{\mathbb{F}}(S)= 2^{\,n+1} - 2.
$
\end{proof}

\section{Proofs of  Theorems \ref{tm1} and \ref{tm2}}\label{se2}
\subsection{Proofs of Theorem \ref{tm1} and Corollary \ref{cr1}}
To set up the proof, we first recall a simple lemma.

\begin{lemma}\cite{HF24}\label{HFt1} 
Let  $\mathcal{P}=\{(A_i, B_i)\}_{i=1}^m$  be a skew Bollob\'as system of pairs  of subspaces of $V\cong \mathbb{R}^n$.
Then
$$
m\leq 2^{n}.$$
\end{lemma}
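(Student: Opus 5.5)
The plan is to run the same Lov\'asz-type argument used in the proof of Theorem \ref{Main4}, keeping track of the whole exterior algebra rather than a single graded piece.

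For each $i\in[m]$, let $v_{A_i}\in\bigwedge^{a_i}V$ be the wedge of a basis of $A_i$, with $v_{A_i}=1\in\bigwedge^0 V$ when $A_i=\{0\}$. Define $v_{B_i}$ in the same way. By the skew Bollob\'as conditions and Lemma \ref{BT1}:
\begin{itemize}
\item $\dim(A_i\cap B_i)=0$ gives $v_{A_i}\wedge v_{B_i}\neq 0$;
\item $\dim(A_i\cap B_j)>0$ for $i<j$ gives $v_{A_i}\wedge v_{B_j}=0$.
\end{itemize}
The degenerate cases behave as they should. If $A_i=\{0\}$, then $v_{A_i}=1$ and $1\wedge v_{B_i}=v_{B_i}\neq0$. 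Moreover, $A_i=\{0\}$ cannot meet any $B_j$ nontrivially, so such a pair can only occur as the last pair; this is consistent with the vanishing conditions. The case $B_i=\{0\}$ is symmetric.

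Next, apply the triangular criterion, Lemma \ref{TC}. Take $g\colon \bigwedge V\times \bigwedge V\to\bigwedge V$ with $g(x,y)=x\wedge y$, which is linear in the first variable, and take $\Omega=\bigwedge V$ with $a_j=v_{B_j}$. The hypotheses of Lemma \ref{TC} are exactly the two conditions above, so $v_{A_1},\dots,v_{A_m}$ are linearly independent in $\bigwedge V$.

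Since $\dim(\bigwedge V)=\sum_{r=0}^n\binom nr=2^n$, this gives $m\le 2^n$. The argument is purely algebraic and works over $\mathbb{R}$ (indeed over any field).

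The only step needing care is the degenerate one, where $A_i$ or $B_i$ is the zero subspace. Lemma \ref{BT1} is stated for subspaces, and we must check that the convention $v_{\{0\}}=1$ keeps both the nonvanishing and the vanishing conditions intact; the verification is the short remark given above. No genericity or projection arguments are needed here, in contrast to Theorem \ref{Main3}.
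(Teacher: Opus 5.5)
Your proof is correct. With the convention $v_{\{0\}}=1$, Lemma~\ref{BT1} together with the triangular criterion (Lemma~\ref{TC}) makes $v_{A_1},\dots,v_{A_m}$ linearly independent in $\bigwedge V$, which has dimension $2^n$.

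The paper does not prove this lemma itself; it quotes it from Heged\"us and Frankl. Your argument is exactly the Lov\'asz-type exterior algebra argument the paper uses for Theorem~\ref{Main4}. There it is sharpened by discarding the degree-$0$ and top-degree components, because the subspaces are nonzero and proper.
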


Now we are ready to prove Theorem \ref{tm1}.
Our proof of Theorem \ref{tm1} has its roots in \cite{HF24}. This is different from the probabilistic method of Theorem \ref{HFY}.

\begin{proof}[\bf Proof of Theorem \ref{tm1}]
The argument is divided into three steps.

\textbf{Step 1. Existence of a  maximal-weight system.} 
 Let $D=\{(a,b): a, b\in [0,n] \text{ and }  a+b\le n\}$.  
For a skew Bollob\'as system $\mathcal{P}=\{(A_i, B_i)\}_{i=1}^m$ with $\dim (A_i)=a_i$ and $\dim (B_i)=b_i$, we define its type
$
\tau(\mathcal{P})=\bigl(m;(a_1,b_1),(a_2,b_2),\dots,(a_m,b_m)\bigr)
$
 and its weight
$$
\omega(\mathcal{P})=\sum_{i=1}^{m}\frac{1}{(1+a_i+b_i)\binom{a_i+b_i}{a_i}} .
$$
The list $(a_1,b_1),\dots,(a_m,b_m)$ is ordered and repetitions are allowed.  By Lemma \ref{HFt1},  we have $m\le 2^n$. 
Because $m\le 2^n$ and each $(a_i,b_i)\in D$, the number of different types is at most $\sum_{m=0}^{2^n}|D|^m$, denote this finite set by $T$. 
For a type $\tau=\bigl(m;(a_1,b_1),\dots,(a_m,b_m)\bigr)$, define  
$$
\omega(\tau)=\sum_{i=1}^{m}\frac{1}{(1+a_i+b_i)\binom{a_i+b_i}{a_i}} .
$$
If $\tau$ is  realizable (i.e., some system $\mathcal{P}$ has type $\tau$), then every system of that type has the same weight $\omega(\tau)$.  
The set of realizable types is a subset of the finite set $T$, hence $\{\omega(\tau):\tau \text{ is realizable}\}$ is a finite set of real numbers. Consequently, its maximum is attained. Choose a realizable type $\tau^*$ with $\omega(\tau^*)=\max \omega(\tau)$.
 We may  assume that $\mathcal{P}=\{(A_i, B_i)\}_{i=1}^m$ has type $\tau^*$. Then  $\omega(\mathcal{P})=\max \omega(\tau)$.

 \textbf{Step 2.  An extension operation that preserves weight.}  
Suppose that $A_i+B_i\neq V$ for some index $i$.  Then there exists a vector $x\in V\backslash (A_i+B_i)$. Since $(A_i,B_i)\in  \mathcal{P}$ and $\mathcal{P}$  is a skew Bollob\'as system, we infer that $
\bigl(A_i\oplus\langle x\rangle, B_i\bigr), \bigl(A_i, B_i\oplus\langle x\rangle\bigr) \notin \mathcal{P}$. 
Replacing the pair $(A_i, B_i)$ in $\mathcal{P}$ by the two pairs in this order: 
 $\bigl(A_i\oplus\langle x\rangle, B_i\bigr), \bigl(A_i, B_i\oplus\langle x\rangle\bigr)$,
so we obtain a new system $\mathcal{P}'$. Observe that $\mathcal{P}'$ is a skew Bollob\'as system.
Let $s_i=a_i+b_i$.  The contribution  of  
$(A_i, B_i)$ to $\omega(\mathcal{P})$ is  
$
\frac{1}{(s_i+1)\binom{s_i}{a_i}} .
$
For the new pairs $
\bigl(A_i\oplus\langle x\rangle, B_i\bigr)$ and $ \bigl(A_i, B_i\oplus\langle x\rangle\bigr)$, their contributions to  $\omega(\mathcal{P}')$  are 
$$
\frac{1}{(s_i+2)\binom{s_i+1}{a_i+1}} + \frac{1}{(s_i+2)\binom{s_i+1}{a_i}}=\frac{1}{(s_i+1)\binom{s_i}{a_i}}.
$$  
Thus, we have $\omega(P')=\omega(P)$.

\textbf{Step 3.  Iterating the extension until all pairs are full.} 
For a pair $(A_i, B_i)$, 
define its deficiency  $d_i = n - \dim(A_i+B_i)\geq 0$.  
The pair is  called full if $d_i=0$, i.e., $A_i\oplus B_i = V$.
Define the potential  function of $\mathcal{P}=\{(A_i, B_i)\}_{i=1}^m$ as
$$
\Phi(\mathcal{P}) = \sum_{i=1}^m 2^{\,n-d_i}.
$$
Assume that the current system $\mathcal{P}$ contains a non-full pair, say with deficiency $d_i \geq 1$.  
Performing the extension operation described in Step 2 replaces this pair by two new pairs, each having deficiency  $d_i-1$.  
The contribution of the original pair to $\Phi$ is  $2^{n-d_i}$. The two new pairs contribute $2 \cdot 2^{\,n-(d_i-1)} = 2^{n-d_i+2}$.  
Hence the change in the potential is  
$$
\Delta(\Phi) = 2^{n-d_i+2} - 2^{n-d_i}
          = 3\cdot 2^{n-d_i} > 0 .
$$
Since $m \leq 2^n$, we have  $\Phi(\mathcal{P})\leq m\cdot 2^n \leq 4^n$.  
Since $\Phi$ is a positive integer that strictly increases at  each extension operation, the process must terminate after finitely many steps.  
Termination occurs precisely when every pair is full, and the weight remains unchanged throughout.
Denote the resulting system again by $\mathcal{P}=\{(A_i, B_i)\}_{i=1}^m$.
Hence, we obtain a skew Bollob\'as system $\mathcal{P}=\{(A_i, B_i)\}_{i=1}^m$ in which  
$
A_i\oplus B_i = V\text{ for all }i\in [m],
$ 
and $\omega(\mathcal{P})=\max \omega(\tau)$.

At this stage, every pair satisfies
 $a_i+b_i = n$.
Fix a dimension $a$ and let $I_a = \{ i\in [m] : a_i = a\}$.  
Restricting   $\mathcal{P}$ to the index set $I_a$ yields a skew Bollob\'as system in which all $A_i$ have dimension $a$ and all $B_i$ have dimension  $n-a$.
By Theorem \ref{thm-L77}, we obtain  
$
|I_a| \le \binom{n}{a}.
$
It follows that
$$
\omega(\mathcal{P})= \sum_{i=1}^m \frac{1}{(n+1)\binom{n}{a_i}}
    = \frac{1}{n+1}\sum_{a} \sum_{i\in I_a} \frac{1}{\binom{n}{a}}\leq 1.\qedhere
$$
\end{proof}

\begin{lemma}\cite{F84}\label{Fle}
Let $n \geq k$ and $t = n - k$. 
Let $V$ be an $n$-dimensional real vector space. 
Let $W_1, \ldots, W_m$ be subspaces of $V$ with $\dim(W_i) < n$ for all $i$. 
Then there exists a $k$-dimensional subspace $V'$ such that  
$
\dim(W_i \cap V') = \max\{\dim(W_i) - t, 0\}
$  
for all $1 \leq i \leq m$.
\end{lemma}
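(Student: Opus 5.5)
The plan is a general-position argument in the spirit of Füredi, proved through the polynomial/Zariski-density method that Lemma \ref{SW2} already uses. We may assume $t\ge 1$, since for $t=0$ the space $V'=V$ works. Write $w_i=\dim(W_i)$ and $\ell_i=\max\{w_i-t,0\}$.

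First, the upper bound. For any subspace $V'$ of dimension $k=n-t$ we have
\[
\dim(W_i\cap V')=w_i+k-\dim(W_i+V')\ge w_i+k-n=w_i-t .
\]
So $\dim(W_i\cap V')\ge \ell_i$ always holds, and we only need to show that equality is achieved simultaneously for all $i$ by some $V'$.

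Second, parametrize the candidates. Let $V'$ be the column span of an $n\times k$ real matrix $M$, and let $Q_i$ be an $n\times(n-w_i)$ matrix whose columns form a basis of a complement of $W_i$. The condition $W_i\cap V'=\{0\}$ is equivalent to the vectors in $V'$ staying independent modulo $W_i$. More generally, $\dim(W_i\cap V')=k-\operatorname{rank}(\bar M)$, where $\bar M$ is the image of $M$ in $V/W_i\cong\mathbb{R}^{n-w_i}$. So we want $\operatorname{rank}(\bar M_i)=\min\{k,n-w_i\}$. Using $n-w_i=n-t-\ell_i+\ldots$ and checking both cases $w_i\ge t$ and $w_i<t$, this value equals $k-\ell_i$.

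The rank condition says that some $r\times r$ minor of $P_iM$ is nonzero, where $P_i$ is the projection matrix onto $V/W_i$ and $r=\min\{k,n-w_i\}$. This minor is a polynomial $g_i$ in the $nk$ entries of $M$. We also add the polynomial $g_0$, a $k\times k$ minor of $M$, which ensures $\dim V'=k$.

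Third, check that each $g_i$ is a nonzero polynomial. It suffices to exhibit one $M$ of the required rank. Choose a basis of $V$ extending a basis of $W_i$, and let the columns of $M$ include $\min\{k,n-w_i\}$ of the vectors outside $W_i$, with the remaining columns filled arbitrarily but independently. The hypothesis $w_i<n$ ensures $n-w_i\ge 1$; it is not strictly needed, but it keeps the setup consistent. The product $g_0g_1\cdots g_m$ is then a nonzero polynomial over the infinite field $\mathbb{R}$, so it does not vanish at some point $M$. The column span $V'$ of such an $M$ satisfies $\dim(W_i\cap V')=k-\operatorname{rank}(\bar M_i)=\ell_i$ for every $i$ at once.

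The main obstacle is the single-$i$ construction in the third step: making sure the minor chosen for each $i$ is not identically zero while keeping $\dim V'=k$. Once that is handled, simultaneity comes for free from the nonvanishing of a product of nonzero polynomials.
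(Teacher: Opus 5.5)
Your proof is correct. The paper gives no proof of this lemma: it is quoted from F\"uredi \cite{F84} and used as a black box in the proof of Corollary \ref{cr1}.

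Your argument parametrizes $V'$ as the column space of an $n\times k$ matrix $M$. It encodes $\operatorname{rank}M=k$ and $\operatorname{rank}(P_iM)=\min\{k,n-w_i\}$ as nonvanishing of minors, then evaluates at a point where $g_0g_1\cdots g_m\neq 0$. This is exactly the genericity device behind Lemma \ref{SW2}.

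In fact, within the paper the lemma follows from Lemma \ref{SW2} in one line. If $G(F)\neq 0$ and $|J|=t$, then the kernel $V'=\operatorname{span}\{f_j: j\notin J\}$ of $\pi^F_J$ has dimension $k$. Moreover $\dim(W_i\cap V')=w_i-\dim\pi^F_J(W_i)=w_i-\min\{w_i,t\}=\max\{w_i-t,0\}$. This also explains the hypothesis $\dim W_i<n$, which is the ``proper subspaces'' hypothesis of Lemma \ref{SW2}.

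Your version shows that a Zariski-generic $V'$ works, so finitely many further genericity conditions can be imposed at the same time. A more elementary alternative avoids polynomials. Choose a hyperplane containing none of the nonzero $W_i$; this is possible because the proper subspaces $W_i^{\perp}\subsetneq V^*$ cannot cover $V^*$ over an infinite field. Such a hyperplane lowers each nonzero $\dim W_i$ by exactly one, and you iterate $t$ times.

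Two small cleanups.

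First, the step you flag as the main obstacle is automatic. $P_i$ has full row rank, so any $r$ of its rows have a right inverse $X$. Placing $X$ in the corresponding $r$ columns of $M$ makes that minor equal to $1$. Hence every $r\times r$ minor of $P_iM$, like every $k\times k$ minor of $M$, is a nonzero polynomial, and no special choice of minor is needed.

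Second, drop the unused $Q_i$, and replace the garbled ``$n-w_i=n-t-\ell_i+\ldots$'' by $k-\min\{k,n-w_i\}=\max\{k-n+w_i,0\}=\ell_i$. Also say explicitly that $\dim(W_i\cap V')=k-\operatorname{rank}(P_iM)$ uses $g_0(M)\neq 0$. The bound $\operatorname{rank}(P_iM)\le r$ is automatic from the shape of $P_iM$.
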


The subspaces $V'$ guaranteed by Lemma \ref{Fle} is said to be \textit{in general position} with respect to the subspaces  $W_1, \ldots, W_m$.
Observe that if some $W_i$ satisfies $\dim(W_i) = n$, the statement remains true because in that case $\dim(V \cap V') =k= n-t$, and the formula still yields $\dim(W_i \cap V') = n-t$.

\begin{proof}[\bf Proof of Corollary \ref{cr1}]
Let  $\mathcal{P}=\{(A_i, B_i)\}_{i=1}^m$  be a skew Bollob\'as $t$-system of subspaces of $V$, where $\dim (A_i)=a_i$ and $\dim (B_i)=b_i$ for every $ i\in [m]$, and $a_m, b_1\geq t$. 
From the definition of a skew Bollob\'as $t$-system, we have $a_i>t$ and $b_j>t$ for all $i\in [m-1]$ and $j\in [2,m]$.  
It follows from Lemma \ref{Fle} that there exists an $(n-t)$-dimensional subspace $V'$ such that  
\begin{itemize}
\item[(a)] $\dim(A_i \cap V') = a_i - t \text{~and~} \dim(B_i \cap V') = b_i - t \text{ for all } i\in [m]$, 

\item[(b)] $\dim(A_i \cap B_i \cap V') = 0 \text{ for all } i\in [m],\text{~and~} \dim(A_i \cap B_j\cap V') >0 \text{ for all } 1\leq i<j\leq m$. 
\end{itemize}  
Hence, $\mathcal{P}'=\{(A_i\cap V', B_i\cap V')\}_{i=1}^m$ forms a skew Bollob\'as  system of subspaces of $V'$. 
Applying Theorem \ref{tm1} to $\mathcal{P}'$ yields the desired result.
\end{proof}

\subsection{Proof of Theorem \ref{tm2}}
To start with, we recall a uniform bound for skew Bollob\'as  system of $d$-tuples of subspaces, which will be used in the counting argument. This bound has its roots in Einstein \cite{Ein2008}, and was later modified by Oum and Wee \cite{OW2018} and recently rediscovered by Yue \cite{Y262}. 

\begin{theorem}[See \cite{Ein2008,OW2018,Y262}]\label{Y262} 
Let $\mathcal{P} = \{(A_i^{(1)},\dots,A_i^{(d)})\}_{i=1}^m$ be a skew Bollob\'as  system of $d$-tuples of subspaces of a vector space $V\cong \mathbb{R}^n$. Suppose that $\dim(A_i^{(k)})\le a_k $ for every $ i\in [m]$ and $k\in [d]$. Then
$$
m \leq \binom{a_1+\dots+a_d}{a_1,\dots,a_d}.
$$
\end{theorem}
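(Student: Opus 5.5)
We would prove the bound with the Lovász exterior-algebra method and the triangular criterion (Lemma \ref{TC}), after first reducing to the case where the ambient space has dimension exactly $N:=a_1+\cdots+a_d$. The rank step at the end is not verified; it is where the argument may fail.

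\textbf{Step 1: reduce to exact dimensions and $\dim V=N$.}

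First we make all dimensions exact. Embed $V$ into a larger space $\widetilde V=V\oplus U$. For each $i$ and $k$, enlarge $A_i^{(k)}$ by $a_k-\dim(A_i^{(k)})$ new vectors of $U$, chosen generically (say, fresh vectors for each $i$ and $k$).

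\begin{itemize}
\item Condition (ii) survives, because the sets only grow.
\item Condition (i), $\dim\bigl(\sum_k A_i^{(k)}\bigr)=\sum_k a_k$, survives by genericity.
\end{itemize}

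Next we reduce to dimension $N$. Take a generic linear map $\varphi:\widetilde V\to W$ with $\dim W=N$, as in Lemmas \ref{SW2} and \ref{Fle}. Genericity makes $\varphi$ injective on each sum $A_i^{(1)}+\cdots+A_i^{(d)}$, since these have dimension at most $N$. So for each $i$, the images $\varphi(A_i^{(1)}),\dots,\varphi(A_i^{(d)})$ form a direct sum decomposition of $W$. Nonzero intersections are preserved because $\varphi$ is injective on each $A_i^{(p)}$, so (ii) still holds. From now on we assume $\dim A_i^{(k)}=a_k$, $W=V$, and $V=A_i^{(1)}\oplus\cdots\oplus A_i^{(d)}$ for every $i$.

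\textbf{Step 2: set up a triangular system.}

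Introduce the partial sums
\[
S_i^{(k)}=A_i^{(1)}\oplus\cdots\oplus A_i^{(k)},\qquad T_j^{(k)}=A_j^{(k+1)}\oplus\cdots\oplus A_j^{(d)},\qquad k\in[d-1].
\]
Then $\dim S_i^{(k)}+\dim T_j^{(k)}=N$ for every $i$, $j$ and $k$.

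\begin{itemize}
\item For $i=j$ we have $S_i^{(k)}\oplus T_i^{(k)}=V$ for each $k$, so $v_{S_i^{(k)}}\wedge v_{T_i^{(k)}}\neq0$.
\item For $i<j$, pick $p<q$ with $A_i^{(p)}\cap A_j^{(q)}\neq\{0\}$. Since $A_i^{(p)}\subseteq S_i^{(p)}$ and $A_j^{(q)}\subseteq T_j^{(p)}$, we get $S_i^{(p)}\cap T_j^{(p)}\neq\{0\}$. Hence $v_{S_i^{(p)}}\wedge v_{T_j^{(p)}}=0$.
\end{itemize}

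Define
\[
x_i=\bigotimes_{k=1}^{d-1}v_{S_i^{(k)}}\quad\text{and}\quad g(x,j)=\bigl(\text{the contraction } \textstyle\bigotimes_k(\,\cdot\,\wedge v_{T_j^{(k)}})\bigr)(x)\in\bigotimes_{k=1}^{d-1}\bigwedge^{N}V.
\]
This $g$ is linear in $x$. By the two bullet points, $g(x_i,i)\neq0$ and $g(x_i,j)=0$ for $i<j$. Lemma \ref{TC} then shows that $x_1,\dots,x_m$ are linearly independent.

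\textbf{Step 3: bound the dimension of the span (main obstacle).}

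The ambient space $\bigotimes_k\bigwedge^{s_k}V$, with $s_k=a_1+\cdots+a_k$, is far too large. So the real task is to show that the $m\times m$ matrix $\bigl(g(x_i,j)\bigr)$ has rank at most $\binom{N}{a_1,\dots,a_d}$.

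\begin{itemize}
\item Fix a basis $f_1,\dots,f_N$ of $V$. Each $g(x_i,j)$ is a product of $d-1$ determinants $\det[S_i^{(k)}\mid T_j^{(k)}]$.
\item We aim to rewrite this product as a single sum over ordered partitions $[N]=P_1\sqcup\cdots\sqcup P_d$ with $|P_k|=a_k$, of the form $\sum_{(P_k)}\alpha_i(P)\,\beta_j(P)$. The intended tool is a generalized Laplace (Cauchy--Binet) expansion along the flag. The key point should be that both flags come from a single direct sum decomposition, so the nested determinants factor through the block minors $\det\bigl(A_i^{(k)}\bigr)_{P_k}$.
\item Such a factorization writes the matrix as a product of an $m\times M$ matrix and an $M\times m$ matrix, where $M$ is the multinomial coefficient. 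That gives the bound.
\end{itemize}

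If this factorization does not work, the fallback is induction on $d$. The base case $d=2$ is Theorem \ref{thm-L77}. For the inductive step, fix the last block: project generically modulo $A^{(d)}$-type data and merge $A^{(d-1)},A^{(d)}$ into one block. Then use $\binom{N}{a_1,\dots,a_d}=\binom{N}{a_1,\dots,a_{d-2},\,a_{d-1}+a_d}\binom{a_{d-1}+a_d}{a_{d-1}}$, and apply Theorem \ref{thm-L77} inside each class of the merged system.
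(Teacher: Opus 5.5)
\textbf{There is a genuine gap, and it is Step 3.} Since $\bigl(g(x_i,j)\bigr)$ is triangular with nonzero diagonal, its rank is exactly $m$. So ``rank $\le\binom{N}{a_1,\dots,a_d}$'' is the theorem itself, and the route you propose for it cannot work when $d\ge 3$.

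\emph{Why the factorization fails.} The entry $g(x_i,j)=\prod_k\det[S_i^{(k)}\mid T_j^{(k)}]$ depends only on the flag of tuple $i$ and the co-flag of tuple $j$. Hence a Laplace/Cauchy--Binet identity $g(x_i,j)=\sum_P\alpha_i(P)\beta_j(P)$ would bound the rank of this matrix for \emph{every} family of tuples, skew or not. That is false. Take $N=3$, $a=(1,1,1)$, and write $S^{(1)}=\langle u\rangle$, $S^{(2)}=\ker w$, $T^{(2)}=\langle q\rangle$, $T^{(1)}=\ker r$, so that $w(u)=r(q)=0$. Then
\[
g=r(u)\,w(q)=\operatorname{tr}\bigl((uw^{\top})(qr^{\top})\bigr).
\]
The rank-one nilpotents $uw^{\top}$ span $\mathfrak{sl}_3$, the trace form is nondegenerate on $\mathfrak{sl}_3$, and every flag and every co-flag occurs in some tuple. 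So suitable tuples give a nonsingular $8\times 8$ block, whereas $\binom{3}{1,1,1}=6$. In general the $x_i$ span the Cartan component of $\bigotimes_k\bigwedge^{s_k}V$, whose dimension counts semistandard tableaux rather than ordered partitions.

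\emph{Root cause.} Using partial sums $S_i^{(k)}$ records only the flag condition ``$S_i^{(k)}\cap T_j^{(k)}\neq\{0\}$ for some $k$''. For subspaces this is strictly weaker than (ii): for example, $(A_i^{(1)}\oplus A_i^{(2)})\cap A_j^{(3)}\neq\{0\}$ can hold while neither block meets $A_j^{(3)}$.

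\emph{The fallback does not help.} Merging $A^{(d-1)}$ and $A^{(d)}$ destroys the skew condition whenever the only witness is $A_i^{(d-1)}\cap A_j^{(d)}\neq\{0\}$. Also, for subspaces there are no ``classes'' of coinciding merged tuples to which Theorem~\ref{thm-L77} could be applied.

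\textbf{How to repair it.} Your Step~1 is fine. The missing idea is to put single blocks, not partial sums, on the left, and to restore complementary degrees by generic quotients. Let $s_k=a_1+\dots+a_k$. Choose subspaces $L_{k-1}\subseteq V$ with $\dim L_{k-1}=s_{k-1}$ (and $L_0=\{0\}$) such that $L_{k-1}\cap\bigl(A_i^{(k)}\oplus T_i^{(k)}\bigr)=\{0\}$ for all $i$. This is finitely many open conditions, and the dimensions add up to $N$. Let bars denote images under the algebra map $\bigwedge V\to\bigwedge(V/L_{k-1})$, and set
\[
x_i=\bigotimes_{k=1}^{d-1}\overline{v}_{A_i^{(k)}}\in\bigotimes_{k=1}^{d-1}\bigwedge^{a_k}(V/L_{k-1}),\qquad g(x,j)=\Bigl(\bigotimes_{k=1}^{d-1}\bigl(\,\cdot\,\wedge\overline{v}_{T_j^{(k)}}\bigr)\Bigr)(x).
\]
Each factor lands in the top degree $N-s_{k-1}$ of $\bigwedge(V/L_{k-1})$.
\begin{itemize}
\item The diagonal is nonzero by the choice of $L_{k-1}$.
\item For $i<j$, take a witness $A_i^{(p)}\cap A_j^{(q)}\neq\{0\}$ with $p<q$. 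It lies in $A_i^{(p)}\cap T_j^{(p)}$, so $v_{A_i^{(p)}}\wedge v_{T_j^{(p)}}=0$ already in $\bigwedge V$. Hence the $p$-th factor vanishes.
\end{itemize}
Lemma~\ref{TC} then gives
\[
m\le\prod_{k=1}^{d-1}\binom{a_k+\dots+a_d}{a_k}=\binom{N}{a_1,\dots,a_d}.
\]
Note that the paper does not prove this statement; it quotes it from Einstein, Oum--Wee and Yue.
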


The next lemma generalizes Lemma \ref{HFt1} to d-tuples of subspaces.

\begin{lemma}\label{le2} 
Let $\mathcal{P} = \{(A_i^{(1)},\dots,A_i^{(d)})\}_{i=1}^m$ be a skew Bollob\'as system of $d$-tuples of subspaces of  $V\cong \mathbb{R}^n$.
Then
$
m\leq d^{n}.$ 
\end{lemma}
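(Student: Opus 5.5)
Lemma \ref{le2} gives $m \le d^n$. The plan is to group the $d$-tuples by their dimension vector, bound each group by Theorem \ref{Y262}, and sum the bounds. In the spirit of Lemma \ref{HFt1}, the total should then come to $\sum \binom{n'}{a_1,\dots,a_d}$ summed over all dimension vectors with $a_1+\cdots+a_d=n'\le n$, which is roughly $d^n$. That grouping alone does not quite reach $d^n$, so the plan is first to reduce to tuples that are full, i.e. $A_i^{(1)}\oplus\cdots\oplus A_i^{(d)}=V$.

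\textbf{Step 1 (reduction to full tuples).} Define the deficiency $n-\sum_k \dim A_i^{(k)}$ of a tuple. If some tuple $i$ is not full, pick $x\notin A_i^{(1)}+\cdots+A_i^{(d)}$. Replace tuple $i$ by the $d$ tuples obtained by adding $\langle x\rangle$ to the $q$-th coordinate, for $q=1,\dots,d$, placed consecutively at position $i$ in a suitable order.

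We must check that the new system is still skew Bollob\'as. Condition (i) holds since $x$ is outside the direct sum. Against the other tuples, the old witnesses survive because we only enlarged subspaces. Among the $d$ new tuples, for the pair $j<j'$ we need some $p<q$ with $\dim(A_j^{(p)}\cap A_{j'}^{(q)})>0$. Tuple $j$ contains $x$ in coordinate $\sigma(j)$ and tuple $j'$ contains $x$ in coordinate $\sigma(j')$, so both contain $\langle x\rangle$. Ordering the new tuples so that $\sigma(j)<\sigma(j')$, i.e. by the index of the coordinate receiving $x$ increasing, gives the witness $\langle x\rangle$.

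Since $m$ never drops, it is enough to bound the size of the final system. The process terminates because, by the potential argument of Theorem \ref{tm1}, total deficiency decreases, which is fine as long as $m$ stays bounded along the way. To avoid circularity, I would instead iterate the operation only a bounded number of times: each non-full tuple is processed once per unit of deficiency, and deficiencies are at most $n$. So the process is finite regardless of any a priori bound.

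\textbf{Step 2 (counting full tuples).} Once all tuples are full, every dimension vector $(a_1,\dots,a_d)$ satisfies $\sum_k a_k=n$. Restrict to the index set $I_{\mathbf a}$ of tuples with dimension vector $\mathbf a$. This restriction is still a skew Bollob\'as system, so Theorem \ref{Y262} gives $|I_{\mathbf a}|\le\binom{n}{a_1,\dots,a_d}$. Summing over all compositions, the multinomial theorem yields $m\le\sum_{\mathbf a}\binom{n}{a_1,\dots,a_d}=d^n$.

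The main obstacle is the ordering in Step 1: the new tuples must be mutually skew in the correct direction while preserving the relations with earlier and later tuples. The consecutive ordering by coordinate index should handle this. A second, minor point is ensuring termination without presupposing the bound, which the monotone decrease of total deficiency settles.
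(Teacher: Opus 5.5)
Your proof is correct, but it takes a longer route than the paper.

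\textbf{The paper's route.} It needs no reduction to full tuples. It groups the tuples only by their first $d-1$ dimensions $(a_1,\dots,a_{d-1})$ and sets $a_d=n-(a_1+\cdots+a_{d-1})$. It then applies the upper-bound form of Theorem \ref{Y262}, which only asks for $\dim A_i^{(k)}\le a_k$. The deficiency of each tuple is absorbed by the slack in the last coordinate. Each class therefore has at most $\binom{n}{a_1,\dots,a_d}$ members, and the multinomial theorem gives $d^n$ in a few lines. So your remark that grouping ``does not quite reach $d^n$'' holds only for grouping by the full dimension vector.

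\textbf{Your route.} You first saturate every tuple, using the same extension operation (with the same order $T_1,\dots,T_d$) that the paper uses later, in Step 2 of the proof of Theorem \ref{tm2}. After that you only need the exact-dimension case of Theorem \ref{Y262}. Your check that the enlarged sequence is again skew Bollob\'as is complete. Distinctness of the new tuples is automatic, since a repeated tuple would violate (i) and (ii) together. You are also right that the potential argument of Theorems \ref{tm1} and \ref{tm2} cannot be reused here, because its boundedness relies on this very lemma.

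\textbf{One correction on termination.} Termination is \emph{not} given by a monotone decrease of total deficiency. Replacing a tuple of deficiency $\delta$ by $d$ tuples of deficiency $\delta-1$ changes the total by $(d-1)\delta-d$, which is positive, e.g., for $d=2$ and $\delta\ge 3$. The argument that works is the other one you sketch. Every newly created tuple has deficiency strictly smaller than its parent, so an original tuple of deficiency $\delta_i$ spawns a $d$-ary tree of depth at most $\delta_i$. Hence the process stops after at most $\sum_i (d^{\delta_i}-1)/(d-1)$ operations.
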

\begin{proof}

Fix an index $i\in [m]$ and denote  
$
a_i^{(k)}=\dim(A_i^{(k)}),\ k\in [d].
$
Since $\dim\left(\sum_{k=1}^d A_i^{(k)}\right)=\sum_{k=1}^d \dim(A_i^{(k)})$, we have  
$
\sum_{k=1}^d a_i^{(k)} \leq n.
$  
Given a non-negative integer vector $(a_1,\dots,a_{d-1})$, define  
$$
\mathcal{I}(a_1,\dots,a_{d-1}):=\{\,i\in[m]: a_i^{(k)}=a_k, k\in [d-1]\}.
$$
Let $s=a_1+\dots+a_{d-1}$ and  $a_d=n-s\ge 0$.  
For every $i\in\mathcal{I}(a_1,\dots,a_{d-1})$, we have  
$
\dim(A_i^{(d)}) \le n-s = a_d
$
and $\dim(A_i^{(k)})= a_k$ for $k=1,\dots,d-1$ as well.  

Restricting   $\mathcal{P}$ to the index set $\mathcal{I}(a_1,\dots,a_{d-1})$ yields a  skew Bollob\'as system.  
By Theorem \ref{Y262}, we obtain  
$$
|\mathcal{I}(a_1,\dots,a_{d-1})|\le \binom{a_1+\dots+a_d}{a_1,\dots,a_d}
      =\binom{n}{a_1,\dots,a_d}.
$$
It follows that
\begin{align*}
m=&\sum_{\substack{a_1,\dots,a_{d-1}\ge 0\\ a_1+\dots+a_{d-1}\le n}}
     |\mathcal{I}(a_1,\dots,a_{d-1})|
    \leq \sum_{\substack{a_1,\dots,a_{d-1}\ge 0\\ a_1+\dots+a_{d-1}\le n}}
        \binom{n}{a_1,\dots,a_{d-1},n-\sum_{k=1}^{d-1}a_k}\\
=&\sum_{\substack{a_1,\dots,a_{d-1}\ge 0\\a_1+\dots+a_d=n}}\binom{n}{a_1,\dots,a_d}=d^n.\qedhere
\end{align*}
\end{proof}

The lemma below will allow us to define the extension operation for $d$-tuples of subspaces.

\begin{lemma}\label{le3} 
Let $\mathcal{P} = \{(A_i^{(1)},\dots,A_i^{(d)})\}_{i=1}^m$ be a skew Bollob\'as system of $d$-tuples of subspaces of  $V\cong \mathbb{R}^n$. Suppose that $(B^{(1)},\dots,B^{(d)})$ is a tuple of subspaces of $V$  such that for some  $i\in[m]$ and  some $k\in [d]$, 
 $B^{(k)}\neq A_i^{(k)}$, $B^{(\ell)}= A_i^{(\ell)}$ for all $\ell\in [d]\backslash \{k\}$  and  $\dim(B^{(1)}+\cdots +B^{(d)})=\dim(B^{(1)})+\cdots +\dim(B^{(d)})$. Then
$(B^{(1)},\dots,B^{(d)})\notin \mathcal{P}$.
\end{lemma}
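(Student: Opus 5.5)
The plan is a proof by contradiction that uses only conditions (i) and (ii) in the definition of a skew Bollob\'as system of $d$-tuples of subspaces. Suppose $(B^{(1)},\dots,B^{(d)})\in\mathcal{P}$, say $(B^{(1)},\dots,B^{(d)})=(A_j^{(1)},\dots,A_j^{(d)})$ for some $j\in[m]$. Since $B^{(k)}\neq A_i^{(k)}$, we have $j\neq i$. Let $u=\min\{i,j\}$ and $w=\max\{i,j\}$, so $u<w$.

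First I will record the elementary consequence of condition (i) that I need. For each index $h$, $\dim(A_h^{(1)}+\cdots+A_h^{(d)})=\sum_\ell \dim(A_h^{(\ell)})$ forces $A_h^{(p)}\cap A_h^{(q)}=\{0\}$ for all $p\neq q$. Otherwise the sum $A_h^{(p)}+A_h^{(q)}$ would already have dimension strictly less than $\dim A_h^{(p)}+\dim A_h^{(q)}$, and the full sum would drop dimension too. This applies to both $h=u$ and $h=w$.

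By condition (ii) for the ordered pair $u<w$, there exist $1\le p<q\le d$ with $\dim(A_u^{(p)}\cap A_w^{(q)})>0$. Since $p\ne q$, at least one of $p,q$ differs from $k$, and I split into cases.
\begin{itemize}
\item If $q\neq k$, then $A_w^{(q)}=A_u^{(q)}$, because the tuples for $i$ and $j$ agree outside coordinate $k$. Hence $A_u^{(p)}\cap A_u^{(q)}\neq\{0\}$ with $p\neq q$, contradicting the consequence of (i) for $h=u$.
\item If $q=k$, then $p\neq k$, so $A_u^{(p)}=A_w^{(p)}$. Hence $A_w^{(p)}\cap A_w^{(k)}\neq\{0\}$, contradicting (i) for $h=w$.
\end{itemize}
Either way we get a contradiction, so $(B^{(1)},\dots,B^{(d)})\notin\mathcal{P}$.

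There is no real obstacle. The only point needing care is that condition (ii) is only available for the ordered pair $u<w$. The case split has to be phrased in terms of $u$ and $w$ rather than $i$ and $j$, and it uses the fact that both tuples satisfy (i). The hypothesis that $(B^{(1)},\dots,B^{(d)})$ is a direct sum is not even needed for the contradiction, since membership in $\mathcal{P}$ already gives (i) for $A_j$. It is presumably included so the new tuple is a legitimate candidate for the extension operation.
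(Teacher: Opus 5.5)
Your proof is correct and is essentially the paper's argument. The paper shows directly that $\dim(A_i^{(p)}\cap B^{(q)})=0$ for every ordered pair $p\neq q$, which handles both orderings of $i$ and $j$ at once, and it reduces each cross intersection to an intersection inside a single tuple exactly as your case split does; the only cosmetic difference is that for $q=k$ the paper invokes the direct-sum hypothesis on $(B^{(1)},\dots,B^{(d)})$, whereas you invoke condition (i) for $A_j$, which, as you note, makes that hypothesis redundant.
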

\begin{proof}
For any $1\leq p\neq q\leq d$, we have 
$$
\dim(A_i^{(p)}\cap B^{(q)})=\begin{cases} \dim(B^{(p)}\cap B^{(q)})=0 & \text { if } q=k, \\ \dim(A_i^{(p)}\cap A_i^{(q)})=0 & \text { if } p=k,\\
 \dim(A_i^{(p)}\cap A_i^{(q)})=0 & \text { if } p\neq k \text{ and } q\neq k.
\end{cases}
$$
By the definition of a skew Bollob\'as system,  the result follows. 
\end{proof}

With these preparations we now prove Theorem \ref{tm2}.

\begin{proof}[\bf Proof of Theorem \ref{tm2}]
The argument follows the same three-step scheme that was used in the proof of Theorem \ref{tm1}.

\textbf{Step 1.  Existence of a  maximal-weight system.} 
For a skew Bollob\'as system $\mathcal{P} = \{(A_i^{(1)},\dots,A_i^{(d)})\}_{i=1}^m$
with $\dim(A_i^{(k)})=a_i^{(k)}$ and $s_i=a_i^{(1)}+\cdots+a_i^{(d)}$, define its  type
$$
\tau(\mathcal{P})=\bigl(m;(a_1^{(1)},\dots ,a_1^{(d)}),\dots ,(a_m^{(1)},\dots ,a_m^{(d)})\bigr)
$$
and its weight
$$
\omega(\mathcal{P})=
\sum_{i=1}^{m}
\frac{1}{\binom{s_i}{a_i^{(1)},\dots ,a_i^{(d)}}
      \binom{s_i+d-1}{d-1}}.
$$
The list $(a_1^{(1)},\dots ,a_1^{(d)}),\dots ,(a_m^{(1)},\dots ,a_m^{(d)})$ is ordered and repetitions are allowed.
By Lemma \ref{le2}, we have $m\le d^{n}$.
Let
$
D=\{(a_1,\dots ,a_d): a_i\in[0,n], \sum_{i=1}^{d}a_i\leq n\}.
$
Since $m\le d^{\,n}$ and each $(a_i^{(1)},\dots ,a_i^{(d)})\in D$, the number of different types is at most $\sum_{m=0}^{d^{\,n}}|D|^{m}$. Denote this finite set by $T$.
For a type $\tau=\bigl(m;(a_1^{(1)},\dots ,a_1^{(d)}),\dots ,(a_m^{(1)},\dots ,a_m^{(d)})\bigr)$, define
$$
\omega(\tau)=
\sum_{i=1}^{m}
\frac{1}{\binom{s_i}{a_i^{(1)},\dots ,a_i^{(d)}}
      \binom{s_i+d-1}{d-1}} .
$$
If $\tau$ is realizable (i.e., there exists a skew Bollob\'as system $\mathcal{P}$ with $\tau(\mathcal{P})=\tau$), then every system of that type has the same weight $\omega(\tau)$.
The set of realizable types is a subset of the finite set $T$, consequently,
$
\{\omega(\tau): \tau \text{ is realisable}\}
$
is a finite set of real numbers, and its maximum is attained.
Choose a realizable type $\tau^{*}$ with $\omega(\tau^{*})=\max\omega(\tau)$ and pick a system $\mathcal{P}$ whose type is $\tau^{*}$. Then
$
\omega(\mathcal{P})=\max\omega(\tau).
$

 \textbf{Step 2.  An extension operation that preserves weight.}  
Assume that in the chosen maximal‑weight system $\mathcal{P} = \{(A_i^{(1)},\dots,A_i^{(d)})\}_{i=1}^m$
 some tuple $(A_i^{(1)},\dots ,A_i^{(d)})$ is not full, i.e.,
$
\bigoplus_{k=1}^{d} A_i^{(k)}\neq V.
$
Take a vector $x\in V\setminus\bigoplus_{k=1}^{d} A_i^{(k)}$ and construct $d$ new tuples
$$
T_k=\bigl(A_i^{(1)},\ldots,A_i^{(k)}\oplus\langle x\rangle,\dots,A_i^{(d)}\bigr),\quad k\in [d].
$$
By Lemma \ref{le3}, we know that $T_k\notin \mathcal{P}$ for all $k\in [d]$.
Replacing  the original tuple $(A_i^{(1)},\dots ,A_i^{(d)})$ in $\mathcal{P}$ by  the $d$ tuples in this order: $T_1,\dots ,T_d$, the resulting system $\mathcal{P}'$ is still a skew Bollob\'as system.
Recall that $\dim(A_i^{(k)})=a_i^{(k)}$ and $s_i=a_i^{(1)}+\cdots+a_i^{(d)}$.
The contribution  of the  tuple $(A_i^{(1)},\dots ,A_i^{(d)})$
 to $\omega(\mathcal{P})$ is  
$$
\frac{1}{\binom{s_i}{a_i^{(1)},\dots ,a_i^{(d)}}
      \binom{s_i+d-1}{d-1}}.
$$
For  the  tuple $T_k$, the dimension vector becomes $(a_i^{(1)},\dots ,a_i^{(k)}+1,\dots, a_i^{(d)})$, its contribution to $\omega(\mathcal{P}')$ is
$$
\frac{1}{\binom{s_i+1}{a_i^{(1)},\dots ,a_i^{(k)}+1,\dots,a_i^{(d)}}\binom{s_i+d}{d-1}}.
$$
Because $\binom{s_i+1}{a_i^{(1)},\dots ,a_i^{(k)}+1,\dots,a_i^{(d)}}= \frac{s_i+1}{a_i^{(k)}+1}\binom{s_i}{a_i^{(1)},\dots ,a_i^{(d)}}$, the total contribution of the $d$ new tuples $T_1,\dots ,T_d$ to $\omega(\mathcal{P}')$ equals
$$
\sum_{k=1}^{d}\frac{a_i^{(k)}+1}{(s_i+1)\binom{s_i}{a_i^{(1)},\dots ,a_i^{(k)},\dots,a_i^{(d)}}\binom{s_i+d}{d-1}}
 =\frac{s_i+d}{(s_i+1)\binom{s_i}{a_i^{(1)},\dots ,a_i^{(d)}}\binom{s_i+d}{d-1}}=\frac{1}{\binom{s_i}{a_i^{(1)},\dots ,a_i^{(d)}}
      \binom{s_i+d-1}{d-1}}.
$$
Hence, $\omega(\mathcal{P}')=\omega(\mathcal{P})$.

\textbf{Step 3.  Iterating the extension until all tuples are full.}
Define the deficiency of the $i$-th tuple $(A_i^{(1)},\dots ,A_i^{(d)})$ by $d_i=n-(a_i^{(1)}+\cdots+a_i^{(d)})\ge0$. The tuple is called full if $d_i=0$ (i.e., $\bigoplus_{k=1}^{d}A_i^{(k)}=V$).
Define the potential function of  $\mathcal{P} = \{(A_i^{(1)},\dots,A_i^{(d)})\}_{i=1}^m$ as
$$
\Phi(\mathcal{P})=\sum_{i=1}^{m}2^{\,n-d_i}.
$$
Assume that in the chosen maximal‑weight system $\mathcal{P}$ some tuple $(A_i^{(1)},\dots ,A_i^{(d)})$ is not full, say with deficiency $d_i \geq 1$.
Performing the extension operation described in Step 2 replaces this tuple by $d$ new tuples, each having deficiency  $d_i-1$. 
The original tuple contributes $2^{n-d_i}$ to $\Phi$. The $d$ new  tuples together contribute $d\cdot2^{n-(d_i-1)}=d\cdot2^{n-d_i+1}$.
Thus, 
$$
\Delta\Phi = d\cdot2^{n-d_i+1}-2^{n-d_i}=2^{n-d_i}(2d-1)>0 .
$$
Since $m\le d^{\,n}$, we have $\Phi(\mathcal{P})\leq m\cdot2^{n}\leq d^{n}\cdot2^{n}$.
Because $\Phi$ is a positive integer that strictly increases with each extension operation, the process must terminate after finitely many steps.
Termination occurs exactly when every tuple is full.
All extension  operations preserve the weight and the skew Bollob\'as property. Therefore, the final system, which we still denote by $\mathcal{P} = \{(A_i^{(1)},\dots,A_i^{(d)})\}_{i=1}^m$, satisfies
$$
A_i^{(1)} \oplus \cdots  \oplus A_i^{(d)} = V\text{ for all } i\in [m],\quad
\omega(\mathcal{P})=\max\omega(\tau).
$$

At this point, $a_i^{(1)}+\cdots+a_i^{(d)}=n$ for all $i\in [m]$. Hence, the weight simplifies to
$$
\omega(\mathcal{P})=
\sum_{i=1}^{m}
\frac{1}{\binom{n}{a_i^{(1)},\dots ,a_i^{(d)}}\,\binom{n+d-1}{d-1}} .
$$
Given a non-negative integer vector $(a_1,\dots,a_{d-1})$, define  
$$
\mathcal{I}(a_1,\dots,a_{d-1})=\{\,i\in[m]: a_i^{(k)}=a_k, k\in [d-1]\}.
$$
For every $i\in I(a_1,\dots ,a_{d-1})$, we have $\dim(A_i^{(k)})= a_k$ for $k\in[d-1]$ and $\dim A_i^{(d)}=a_d:=n-(a_1+\cdots+a_{d-1})$.
Consider the subsystem  obtained by restricting the original system  $\mathcal{P}$ to the index set $\mathcal{I}(a_1,\dots,a_{d-1})$, which is still a skew Bollob\'as system.  
By Theorem \ref{Y262}, we obtain
$$
|I(a_1,\dots ,a_{d-1})|\le\binom{n}{a_1,\dots ,a_d}.
$$
Partitioning the sum according to the first $d-1$ dimensions gives
$$
\sum_{i=1}^{m}\frac{1}{\binom{n}{a_i^{(1)},\dots ,a_i^{(d)}}}
 =\sum_{(a_1,\dots ,a_{d-1})}\;
   \sum_{i\in I(a_1,\dots ,a_{d-1})}\frac{1}{\binom{n}{a_1,\dots ,a_d}} .
$$
Inside each fixed class,
$$
\sum_{i\in I(a_1,\dots ,a_{d-1})}\frac{1}{\binom{n}{a_1,\dots ,a_d}}
   \le\frac{|I(a_1,\dots ,a_{d-1})|}{\binom{n}{a_1,\dots ,a_d}}\le 1 .
$$
The number of classes equals the number of non‑negative integer solutions of $a_1+\dots +a_{d-1}\le n$, which is $\binom{n+d-1}{d-1}$.
Consequently,
$$
\omega(\mathcal{P})=
\sum_{i=1}^{m}
\frac{1}{\binom{n}{a_i^{(1)},\dots ,a_i^{(d)}}\,\binom{n+d-1}{d-1}}\leq \frac{1}{\binom{n+d-1}{d-1}}\sum_{(a_1,\dots ,a_{d-1})}1=1.
$$ 
Because the extension procedure does not change the weight, the original maximal‑weight system $\mathcal{P}$ satisfies $\omega(\mathcal{P})\le1$.
Hence, every skew Bollob\'as $d$-tuple system obeys the same inequality.
\end{proof}



 \section*{Acknowledgement}
 Lihua Feng was supported by the NSFC (Nos. 12271527 and 12471022). This work was also supported by NSF of Qinghai Province (No. 2025-ZJ-902T). Lu Lu was supported by the NSFC (No. 12371362).

\end{document}